\setlist[1]{labelindent=\parindent}
\setlist[enumerate,1]{label = \Roman*., ref = \Roman*}
\setlist[enumerate,2]{label = \Alph*., ref = \Alph*}
\setlist[enumerate,3]{label = \roman*., ref = \roman*}
\setlist[enumerate,4]{label = \alph*., ref = \alph*}
\numberwithin{equation}{section}
\newtheorem{theorem}{Theorem}[section]
\newtheorem{lemma}[theorem]{Lemma}
\newtheorem{corollary}[theorem]{Corollary}
\newtheorem{problem}[theorem]{Problem}
\theoremstyle{remark}
\newtheorem{remark}[theorem]{Remark}
\newtheorem{assumption}[theorem]{Assumption}
\newcommand{\pri}{\mathrm{prior}}
\newcommand{\EE}{\mathbb{E}}
\newcommand{\Cov}{\mathrm{Cov}}
\newcommand{\MCG}{\mathcal{G}}
\newcommand{\ini}{\mathrm{ini}}
\newcommand{\rd}{\mathrm{d}}
\newcommand{\KL}{\textsf{KL}}
\newcommand{\Tr}{\textrm{Tr}}
\newcommand{\Rb}{\mathbb{R}}
\newcommand{\Eb}{\mathbb{E}}
\newcommand{\Sb}{\mathbb{S}}
\newcommand{\Cc}{\mathcal{C}}
\newcommand{\Dc}{\mathcal{D}}
\newcommand{\Kc}{\mathcal{K}}
\newcommand{\Lc}{\mathcal{L}}
\newcommand{\Pc}{\mathcal{P}}
\newcommand{\Qc}{\mathcal{Q}}
\newcommand{\Oc}{\mathcal{O}}
\newcommand{\rmc}{\mathrm{c}}
\newcommand{\rmd}{\mathrm{d}}
\newcommand{\rmb}{\mathrm{b}}
\newcommand{\rmB}{\mathrm{B}}
\newcommand{\rmN}{\mathrm{N}}
\newcommand{\Law}{\mathrm{Law}}
\newcommand{\rhotarg}{\rho_{\text{target}}}
\DeclareMathOperator*{\argmin}{\mathrm{argmin}}
\title[Many particle Bayesian sampling]{Bayesian sampling using interacting particles}
\author{Shi Chen}
\address{Mathematics Department, University of Wisconsin-Madison, 480 Lincoln Dr., Madison, WI 53705 USA.}
\email{schen636@wisc.edu}
\author{Zhiyan Ding}
\address{Department of Mathematics, University of California, Berkeley, CA 94720, USA.}
\email{zding.m@math.berkeley.edu}
\author{Qin Li}
\address{Mathematics Department, University of Wisconsin-Madison, 480 Lincoln Dr., Madison, WI 53705 USA.}
\email{qinli@math.wisc.edu}
\date{\today}
\begin{document}

\begin{abstract}
Bayesian sampling is an important task in statistics and machine learning. Over the past decades, many ensemble-type sampling methods have been proposed. In contrast to the classical Markov chain Monte Carlo methods, these new methods deploy a large number of interactive samples, and the communication between these samples is crucial in speeding up the convergence. To justify the validity of these sampling strategies, the concept of interacting particles naturally calls for the mean-field theory. The theory establishes a correspondence between particle interactions encoded in a set of coupled ODEs/SDEs and a PDE that characterizes the evolution of the underlying distribution. This bridges numerical algorithms with the PDE theory used to show convergence in time. Many mathematical machineries are developed to provide the mean-field analysis, and we showcase two such examples: the coupling method and the compactness argument built upon the martingale strategy. The former has been deployed to show the convergence of the ensemble Kalman sampler and the ensemble Kalman inversion, and the latter will be shown to be immensely powerful in proving the validity of the Vlasov-Boltzmann simulator.
\end{abstract}

\maketitle

\setcounter{tocdepth}{2}
\tableofcontents


\section{Introduction}\label{sec:intro}
Sampling is a fundamental task in the field of machine learning and serves as a crucial building block in various applications~\cite{MCMCforML}. For example, sampling plays a significant role in Bayesian statistics, data assimilation~\cite{Reich2011}, generative modeling~\cite{SoSoKiKuErPo:2020score}, volume computation~\cite{Convexproblem}, and bandit optimization~\cite{pmlr-v119-mazumdar20a,ATTS,10.5555/2503308.2343711}. Sampling methods have been widely used in various scientific domains, including atmospheric science, petroleum engineering, and epidemiology~\cite{FABIAN198117,PES,COVID_travel}.

The goal of sampling is to obtain approximately independent, identically distributed (i.i.d.) samples that come from a target distribution. Given a target distribution with density
\begin{equation}\label{def:target}
\rhotarg(x) \propto e^{-f(x)}\,,\quad \text{with }\quad f(x)\,:\mathbb{R}^d\to\mathbb{R}\,,
\end{equation}
we are to find
\[
\{x_i\}_{i=1}^N\sim \rhotarg\,.
\]
It is typically assumed $d\gg 1$. A classical origin of the sampling problem is Bayesian inference, in which the function $f$ takes the form of
\begin{equation}\label{eqn:cost_bayes}
f(x;y) = \Phi(x;y)+\frac{1}{2}\left|x-x_0\right|^2_{\Gamma_0}\,,\quad\text{with}\quad\Phi(x;y)=\frac{1}{2}\left|y-\mathcal{G}(x)\right|^2_\Gamma\,,
\end{equation}
where $\mathcal{G}:\mathbb{R}^d\rightarrow\mathbb{R}^{d'}$ denotes the forward map that maps input $x$ to the output $\mathcal{G}(x)$ that hopefully agrees with the data $y\in\mathbb{R}^{d'}$, and $\Phi$ is to penalize the measuring error. $\Gamma$ and $\Gamma_0\in\mathbb{R}^d$ are positive semidefinite matrices that serve as the covariance matrix in the domain and range of the map with $|\ \cdot\ |_{\Gamma_0}:=|\Gamma_0^{-\frac{1}{2}}\ \cdot\ |\,$ and $|\ \cdot\ |_\Gamma:=|\Gamma^{-\frac{1}{2}}\ \cdot\ |\,$.

This formulation stems from the Bayesian theorem that states
\begin{equation}\label{eqn:bayes}
\rhotarg(x) =\rho_{\text{post}}(x)\propto \exp(-f(x))= \rho_{\text{prior}}(x)l(y;x)
\end{equation}
with $\rho_\text{prior}(x)\propto \exp\left(-\frac{1}{2}\left|x-x_0\right|^2_{\Gamma_0}\right)$, and the likelihood function $l(y;x) =\exp\left(-\Phi(x;y)\right)$ that depends on the forward map $\mathcal{G}$. Since $y$ is the given data and is not subject to change, when the context is clear, we drop its dependence in $l$, $f$, and $\Phi$.

Sampling from a target distribution has a rich literature. Dating back to 1953 with the development of the Metropolis algorithm, for several decades, Markov chain Monte Carlo (MCMC) algorithms have been the primary tool for generating samples~\cite{GiRiSp:1995markov,GaLo:2006markov,RoCaCa:1999monte}. Different MCMC methods adopt different dynamics, but they share the same mathematical structure. Each MCMC method is run according to a transition matrix, and the target distribution is the unique invariant measure of such transition matrix. Subsequently, as a sample evolves according to this transition matrix, and its final distribution approaches the invariant measure, meaning that the sample can be viewed as a drawing from the target distribution. Over the decades, MCMC methods have attracted many investigations, and we have obtained a collection of results on MCMC convergence, including the convergence rate on the continuous and discrete level, both asymptotically and non-asymptotically with explicit rates~\cite{durmus2017,doi:10.1111/rssb.12183, DALALYAN20195278, durmus2018analysis, Cheng2017UnderdampedLM,eberle2019,cao2019explicit,pmlr-v75-dwivedi18a,wu2022minimax,Chewi2020OptimalDD,10.5555/3495724.3497366,pmlr-v178-chewi22a,DBLP:conf/nips/ShenL19,DBLP:conf/colt/LeeST20,NEURIPS2019_65a99bb7}.

In contrast to the classical single-particle MCMC methods, a new line of techniques surrounding ``ensemble method'' emerged over the past decade and have garnered a lot of interest. Instead of moving a single particle, many particles evolve simultaneously and interactively. The concept of utilizing a many-particle system instead of one particle traces back to the Ensemble Kalman Filter~\cite{Evensen_enkf,Ghil1981,Evensen2003,firstEnkf}, an ensemble-based version of the Kalman filter~\cite{K_1960,KB_1961} that stores Kalman matrices through sample representations. In the early 2010s, various ensemble-based methods were developed, including Affine Invariant Ensemble Sampler~\cite{goodman2010ensemble}, Ensemble Kalman Inversion~\cite{DAEnKF,Iglesias_2013} and, more recently, Consensus Based Sampling~\cite{CaHoStVa:2022consensus,gerber2023meanfield}, Stein Variational Gradient Descent~\cite{LiWa:2016stein,LuLuNo:2019scaling}, Langevin Sampling with Birth-death~\cite{LuLuNo:2019accelerating}, and Ensemble Kalman Sampler~\cite{EKS}, and their derivatives such as the Weighted Ensemble Kalman Inversion~\cite{ding2020ensemblecorrect}.

It is believed that ensemble methods have some advantages over single-particle MCMC methods. The argument is that if many particles are exploring the full landscape together, the chance of them covering the full domain and discovering all local and global minima is higher, and thus can potentially capture the target distribution faster. Yet, this promise has not been fully realized. The first step towards this is to spell out explicitly the convergence rate of the ensemble: With what convergence rate in time and in the number of samples can the ensemble converge to the target distribution?

Unfortunately, unlike single-particle MCMC solvers, the numerical analysis for ensemble-type algorithms is considerably more intricate. Many particles interact in a nonlinear manner. It is not immediate that such an interaction speeds up the convergence. Unlike MCMC, which typically requires one to trace the evolution of the distribution of the single sample through the application of the transition matrix, for an ensemble, since many samples are involved, the underlying distribution lives on a tensorized space that is significantly higher-dimensional. This perspective necessitates the deployment of the mean-field limit to compress the tensorized probability space down to the original space. Upon such compression, the many-particle system is altogether described by an ensemble (empirical) distribution, providing a fair platform to compare with the classical MCMC evolution.

A variety of approaches can be used to construct a sampling algorithm involving many particles, but the general procedure is usually the same. Generally speaking, there are three steps:
\begin{itemize}
    \item[1.] Identify an evolutional PDE whose long time behavior recovers an invariant measure that coincides with the target distribution;
    \item[2.] Design its particle method and translate the evolutional PDE for the underlying distribution to a coupled ODE/SDE system for each particle;
    \item[3.] Discretize the ODE/SDE system by implementing the Euler-Maruyama or high-order discretization methods.
\end{itemize}
Accordingly, the analysis of the numerical error is divided into three sections.
\begin{itemize}
    \item Show that the solution to the PDE indeed converges to the target distribution efficiently. This is mostly PDE analysis.
    \item Prove that the PDE can be solved by the particle method. This is to show that the ensemble distribution of the $N$-particle accurately recovers the PDE solution. This step equates the PDE and the coupled ODE/SDE system, and is typically called deriving the mean-field limit.
    \item Demonstrate that the SDE discretization accurately recovers the dynamics of the underlying SDE system. This corresponds to analyzing the discretization error and showing that it is small when the stepsize $h$ is small.
\end{itemize}
We summarize the above three steps in Figure~\ref{fig:three_stage}, and the associated strategy deployed to validate the algorithms. We should stress that these steps are ordered, and the order cannot be switched. Mathematically analyzing the algorithm amounts to taking the following limits:
\[
\lim_{t\to\infty}\lim_{N\to\infty}\lim_{h\to0}\,.
\]
The three limits do not commute, and the analysis has to be done accordingly. In particular, for a preset accuracy level $\epsilon$, running the analysis in Step 1 amounts to pin a time $t$ so that the PDE solution at this time is within $\frac{\epsilon}{3}$ to the target distribution; and for this $t$, we run analysis in Step 2 to identify the $N$, to ensure the $N$ coupled SDE solution approximates the PDE within $\frac{\epsilon}{3}$; and finally, for the picked $(t,N)$, we run analysis in Step 3 to identify a small enough $h$ that ensures the discrete solution is within $\frac{\epsilon}{3}$ to the continuous-in-time SDE solution. This ordered constraint comes from the techniques that we deploy, such as the Gr\"onwall inequality. It is possible for the limits to commute, if other machinery gets deployed. We do not discuss these options in this paper.

Each of these three steps of justification carries its own challenges and attracts a great deal of investigation. In this paper, we only discuss the mean-field analysis component of it (Step 2), using the Ensemble Kalman Sampler, Ensemble Kalman Inversion, and Boltzmann simulator as examples. The other two steps are equally important in the whole algorithm justification process, but are not the main focus of the paper. We refer the interested readers to~\cite{DeVi:2005trend,Ki:2014boltzmann,ding2020ensemblecorrect,Carrillo_2021,carrillo2023mean}.

To a large extent, the goal of proving ``mean-field limit'' and justifying particle methods for simulating PDEs very much align. Essentially one claims that a distribution density function can be represented as an ensemble of many particles drawn from it. Accordingly, the evolution of this density function encoded in a dynamical PDE can be translated to the evolution of these representative particles. The question then becomes quantifying the differences between these two systems when the number of representative particles is large.

Multiple approaches have been taken to achieve mean field derivation. The most straightforward machinery is the coupling method~\cite{Sznitman,Fournier2015,Carrilo2011,Blob,Bolley_Carrillo}. This amounts to designing an auxiliary particle system with all particles passively pushed by the underlying ``field.'' This auxiliary system serves as a bridge to link the underlying distribution and the ensemble system. When the system demonstrates good properties, such as Lipschitz forces, the method becomes very handy, as will be summarized in Section~\ref{sec:unif_analysis}. Often in practice, forces, though regular enough, are not globally Lipschitz, preventing the direct application of the coupling method. Depending on the context, a mathematical treatment specifically designed for the SDEs at hand is required. This is a case-by-case discussion, and we will dive in detail in Section~\ref{sec:ensemble}, using the ensemble Kalman inversion (EKI)\cite{Iglesias_2013} and the ensemble Kalman sampler (EKS)\cite{EKS} as examples.

Another line of mean-field analysis is termed the martingale method. Within this framework, one can translate the PDE problems into their associated martingale problems that look for solutions as probability measures over the space of path measures. In this path measure space, one first demonstrates the tightness for a family of path measures. The tightness allows one to pass to a limit, upon which one can show that this limit happens to solve the mean-field PDE. Since the martingale method translates the mean-field convergence to the weak convergence of a sequence of measures over stochastic processes,
the notion of convergence is quite weaker. Weak convergence is deployed to accommodate systems that are not so smooth, and thus the approach can handle a much larger class of interacting particle systems. In particular, we will use it to show the Vlasov-Boltzmann simulator in Section~\ref{sec:Boltzmann}. The Boltzmann collision operator is significantly more complex, and mean-field analysis is beyond what the coupling method can handle. However, while the method is much more versatile and relaxes the assumptions on systems' coefficients, it lacks a precise description of the convergence rate. In Section~\ref{sec:boltzmann_numerics}, we present some numerical evidence.

The mean-field limit results that deploy the coupling method (Section~\ref{sec:ensemble}) have appeared in literature, and we specifically mention~\cite{ding2019ensemble} and~\cite{Zhi_Qin_2021}. The machinery is rather standard and the set of theory is complete. 
Section~\ref{sec:Boltzmann} discusses the Boltzmann simulator to perform Bayesian sampling. For this new design of sampling method, we adopt the martingale strategy utilized in showing the mean-field limit for the classical Boltzmann equation, and transition it into our new artificial system.

We finally stress that Bayesian sampling and optimization are two machine learning domains that are closely tied to each other, and they share mathematical strategies. Mean-field limit and interactive particles are also heavily relied on in the design and analysis of optimization solvers, typically under the named of ``consensus-based" or ``swarm-based." This is a vast domain of research and we can only list a few recent studies~\cite{Pa:2024optimization,BeBoPa:2022binary,BoPa:2023kinetic,carrillo2024interacting,gerber2023meanfield,CaChToTs:2018analytical,LuTaZe:2022swarm,TaZe:2023swarm,ding2024swarmbased}.

\begin{figure}
    \centering
    \includegraphics[width=0.8\textwidth]{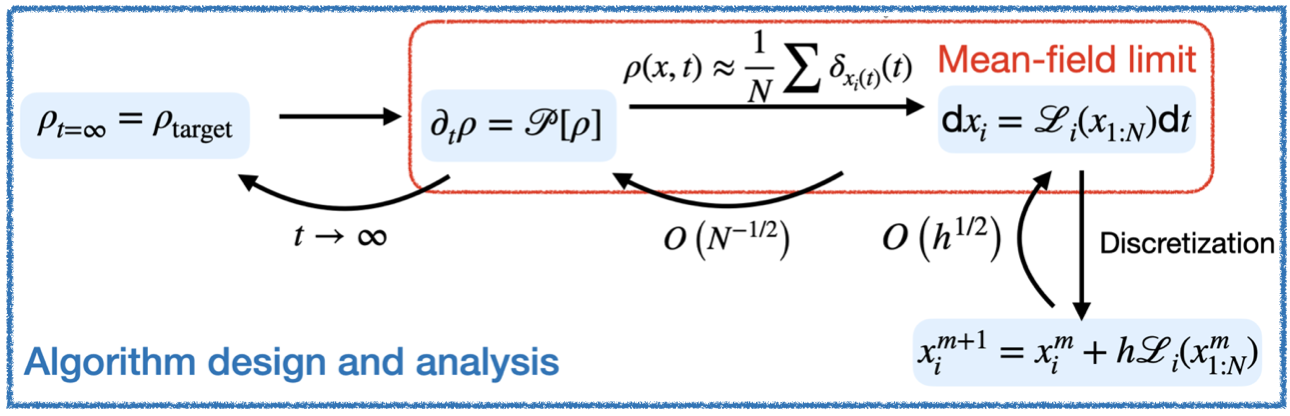}
    \caption{The process of algorithm design (left to right) typically involves three steps: designing a PDE, designing its particle method, discretization. The analysis procedure (right to left) is consistent with the design process in reverse. This paper only focuses on the second step of algorithm analysis, which involves applying mean-field analysis to validate the mean-field limit of multiple-particle algorithms.}
    \label{fig:three_stage}
\end{figure}

\section{Mean-field limit for ensemble based sampling methods}\label{sec:ensemble}
Ensemble based sampling methods are immensely popular in areas related to atmospheric science and weather prediction. The concept originates from Ensemble Kalman Filter, an algorithmic pipeline for implementing Kalman Filter. Kalman Filter, as a standard technique for data assimilation, allows one to gradually assimilate information from collected data to dynamical models that have unknown parameters, so to gradually refine our estimates for these parameters. In the most brute-force simulation, the Kalman filter calls for the computation of the Kalman gain matrix, a term strongly tied to the covariance of the parameters. This is a very expensive numerical step when the problem is high-dimensional. Ensemble Kalman Filter replaces the computation of the covariance by the ensemble covariance that can be extracted from a large quantity of samples. This way, one avoids direct computing the covariance and it significantly reduces the computational cost~\cite{Evensen_enkf,Ghil1981,Evensen2003,firstEnkf,DAEnKF,LeGland,law_tembine_tempone}.

The idea of using the ensemble covariance to replace the true covariance was later adopted by the field of Bayesian sampling, see~\cite{calvello2022ensemble,Reich2011} and the references therein, e.g.~\cite{Bergemann_Reich10_local,Bergemann_Reich10_mollifier}. Starting in the early 2000s, the idea is at the core of a long list of exciting algorithm developments~\cite{Iglesias_2013,EKS,ding2020ensemblecorrect}. A similar perspective-shifting took place in optimization as well. Ensemble based, or swarm based methods were designed to specifically tackle optimization problems that are non-convex, and has since generated tremendous excitement~\cite{CaChToTs:2018analytical,LuTaZe:2022swarm,TaZe:2023swarm}. 

Two distinctive and representative solvers in this research direction are the Ensemble Kalman Inversion (EKI)~\cite{Iglesias_2013} and the Ensemble Kalman Sampler (EKS)~\cite{EKS}. The two methods are widely used to predict geophysical turbulence~\cite{DuIaXi:2019turbulence,GuHaScHuDuLoWu:2023posteriori,ScLaStTe:2017earth}, modeling reservoirs~\cite{IgLaSt:2013evaluation}, and modeling fluid flow from the cardiovascular system~\cite{QuMaVe:2017cardiovascular}, and generated excitement in various scientific domains.

We are interested in understanding the validity of the algorithms. As discussed in Section~\ref{sec:intro}, we are particularly interested in showing the ``mean-field'' component of the method justification. Both systems deploy covariance matrices for samples to communicate with each other, and the flow fields for the samples are rather smooth. These properties allow us to implement the coupling method to conduct the mean-field analysis~\cite{ding2019ensemble,Zhi_Qin_2021} and demonstrate the convergence rate of the mean-field limit.

Below, we will first present in Section~\ref{sec:unif_analysis} a unified framework that discusses a typical use of the coupling method. Sections \ref{sec:EKI} and~\ref{sec:EKS} are designated for EKI and EKS, respectively.

\subsection{Unified coupling method framework}\label{sec:unif_analysis}
\begin{figure}[htb]
    \centering
    \includegraphics[width=0.8\textwidth]{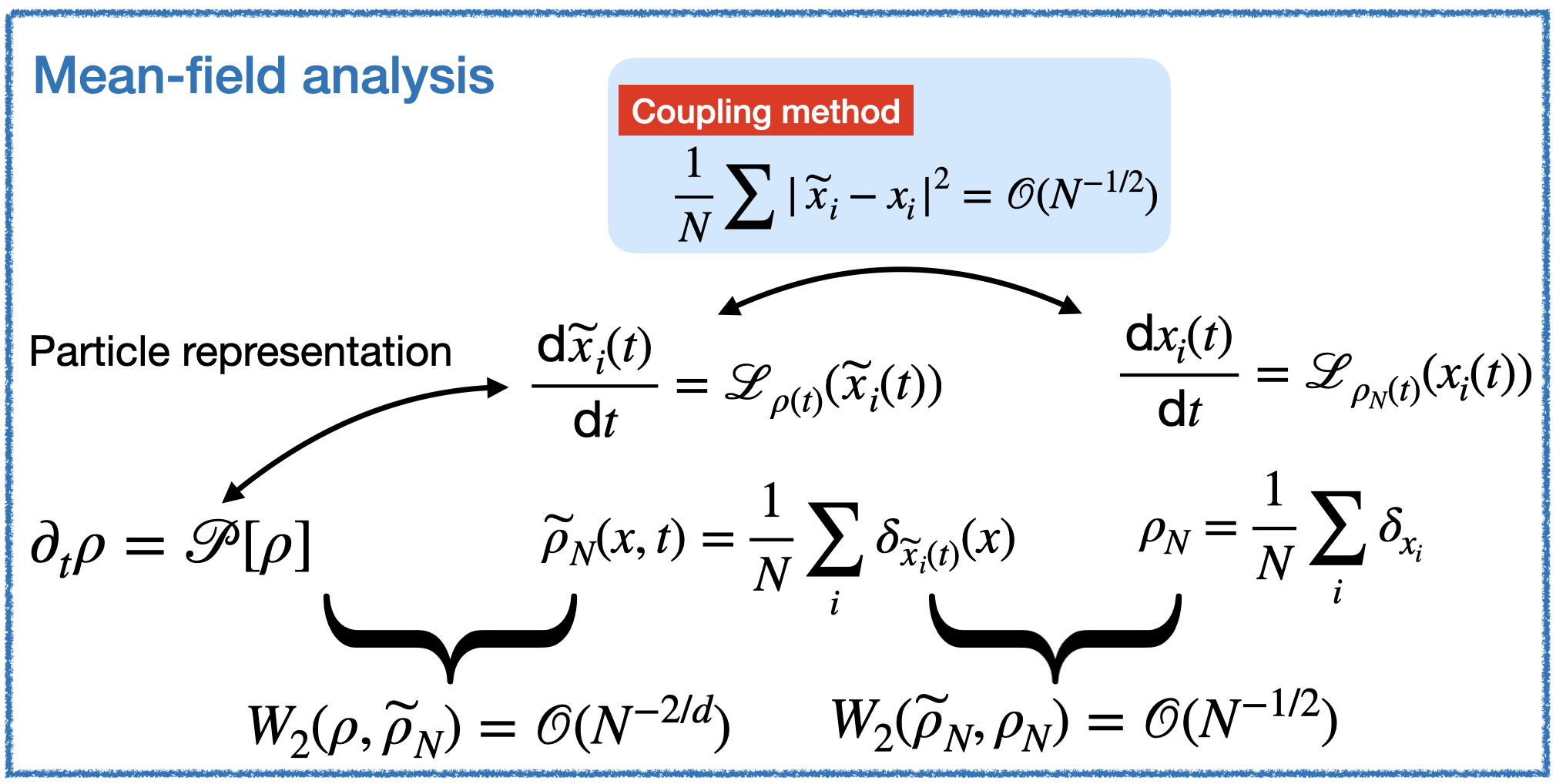}
    \caption{The general routine of the mean-field analysis from coupling perspective. The auxiliary particles $\widetilde{x}_i(t)$ evolve according to a coupled ODE/SDE with $\rho(t)$ generated from the mean-field limiting PDE. The coupling method shows that $x_i$ is close to $\widetilde{x}_i$, which implies that $\widetilde{\rho}_N$ is close to $\rho_N$. This, combined with the fact that $\widetilde{\rho}_N\approx \rho$ from \cite{Fournier2015}, ultimately justifies $\rho_N\approx\rho$.}
    \label{fig:coupling}
\end{figure}

The coupling method was beautifully shown in~\cite{Sznitman,Vi:2009optimal} and this framework can handle a large class of systems that have nice properties. We take Fokker-Planck type PDE for the illustration:
\begin{equation}\label{eqn:rho_pde}
\begin{aligned}
    \partial_t\rho(t,x) &=\mathcal{P}[\rho(t,x))]\\
    &= \nabla_x\cdot(F(t,x;\rho)\rho(t,x))) +\frac{1}{2}\nabla_x\cdot(D(t,x;\rho)^\top D(t,x;\rho)\nabla_x\rho(t,x))\,,
\end{aligned}
\end{equation}
where $\rho(t)$ is the PDE solution. The operator $\mathcal{P}$ is composed of two components, the drift term with the coefficient $F(t,x;\rho)$ and the diffusion term with the coefficient $D(t,x;\rho)$. Both $F$ and $D$ can depend on $\rho$, and when they do, this equation is nonlinear. Throughout the section, we term the equation System A.

This system is closely related to the following two types of SDEs.
\begin{itemize}
\item[System B:] The particle system pushed by the underlying characteristics. Denote $\{\tilde{x}_i\}$ a collection of samples i.i.d. drawn from $\rho(t=0,\cdot)$ at the initial time $t=0$. These samples evolve according to:
\begin{equation}\label{eqn:auxillary}
\rd\tilde{x}_i(t)=F(t,\tilde{x}_i(t);\rho)\rd t+D(t,\tilde{x}_i(t);\rho)\rd W^i_t\,,
\end{equation}
where $W^i_t$ are independent Brownian motions. We further denote the ensemble distribution
\begin{equation}\label{eqn:rho_n_tilde}
\widetilde{\rho}_{N}(t,x):=\frac{1}{N}\sum^N_{n=1}\delta_{\widetilde{x}_i(t)}\,.
\end{equation}
The system is driven by the true underlying characteristics and thus honestly reflects the flow field. We expect $\widetilde{\rho}_{N}$ provided by System B to approximate $\rho$. However, we should note that~\eqref{eqn:auxillary} is not a system that can be numerically realized since its parameters depend on $\rho(t,\cdot)$, the solution to the PDE, and is unknown.
\item[System C:] The particle system driven by the self-generated field. Let $\{x_i(t)\}_{i=1}^N$ be a collection of $N$ samples i.i.d. drawn from $\rho(t=0,\cdot)$ at time $t=0$. We evolve them according to:
\begin{equation}\label{eqn:algorithm}
\rd{x}_i(t)=F(t,{x}_i(t);\rho_N)\rd t+D(t,x_i(t);\rho_N)\rd W^i_t\,,
\end{equation}
where drift and diffusion coefficient $F$ and $D$ depend on the self-induced ensemble distribution:
\begin{equation}\label{eqn:approx}
    \rho_N(t,x)=\frac{1}{N}\sum_i\delta_{x_i(t)}(x)\,.
\end{equation}
The system looks almost identical to System B in~\eqref{eqn:auxillary}. However, this new system deploys a self-generated $\rho_N$ instead of relying on an external $\rho$, and thus can be implemented numerically.
\end{itemize}

The mean-field derivation is to show that the solution of the underlying PDE~\eqref{eqn:rho_pde}, in System A, can be well approximated by the ensemble $\rho_N$ in~\eqref{eqn:approx} in System C. This way, one translates the computation of the underlying PDE to the particle simulation in~\eqref{eqn:algorithm}.

The coupling method amounts to showing the similarity between System A and System C by equating both with the middle agent: System B. Indeed, according to~\cite{Fournier2015}, System A and B are already close:
\begin{equation}\label{eqn:FG_convergence}
\text{dist}(\rho,\widetilde{\rho}_N)\to 0\,,\quad\text{as}\quad N\to\infty\,,
\end{equation}
for some proper definition of the $\text{dist}$ function\footnote{According to ~\cite{Fournier2015}, the $W_2$-distance between the measures $\rho\mathrm{d}x$ and $\widetilde{\rho}_N\mathrm{d}x$ converges to $0$ when $N$ approaches to infinity. We omit the symbol $\mathrm{d}x$ for simplicity of the notation when the context is clear.}. Most of the technical component of the proof goes to showing that System B and C are also close, meaning that, for all $0\leq t<\infty$,
\begin{equation}\label{eqn:coupling_error}
\frac{1}{N}\sum_j\mathbb{E}\left|\tilde{x}_j-x_j\right|^2\xrightarrow{N\to \infty}0\,,\quad\Rightarrow\quad \text{dist}(\widetilde{\rho}_N,{\rho}_N)\xrightarrow{N\to\infty}0\,.
\end{equation}

If this can be proved, one can run the triangle inequality for:
\[
\text{dist}(\rho,\rho_N)\leq \text{dist}(\rho,\tilde{\rho}_N)+\text{dist}(\widetilde{\rho}_N,{\rho}_N)\to 0\,,\quad\text{as}\quad N\to\infty\,,
\]
concluding the mean-field derivation.

To show~\eqref{eqn:coupling_error}, we note the similarity in form between~\eqref{eqn:auxillary} and~\eqref{eqn:algorithm}. The two sets of SDEs are presented in almost identical form, and mathematically it is straightforward to compute the difference. A typical strategy is to subtract the two systems and evolve their differences.
\begin{equation}\label{eqn:diff}
\rd\left(\tilde{x}_i(t)-x_i(t)\right) = \left[F(t,\tilde{x}_i(t);\rho)-F(t,x_i(t);\rho_N)\right]\rd t+\left[D(t,\tilde{x}_i(t);\rho)-D(t,x_i(t);\rho_N)\right]\rd W^i_t\,.
\end{equation}
Considering $x_i(t=0)=\tilde{x}_i(t=0)$, we need to show that the two terms on the right-hand side are controllable. This is indeed true when some proper assumptions are made. We now give a quick overview of the general recipe for doing so.

Deploying It\^o's formula, we obtain
\begin{equation}\label{eqn:W_2_diff}
\begin{aligned}
&\frac{\mathrm{d}}{\mathrm{d}t}\mathbb{E}\left(\left|\tilde{x}_i(t)-x_i(t)\right|^2\right)\\
=&2\underbrace{\mathbb{E}\left(\left(\tilde{x}_i(t)-x_i(t)\right)\left[F(t,\tilde{x}_i(t);\rho)-F(t,x_i(t);\rho_N)\right]\right)}_{\text{Term I}}\\
&+\underbrace{\mathbb{E}\left(\left\|D(t,\tilde{x}_i(t);\rho)-D(t,x_i(t);\rho_N)\right\|^2_F\right)}_{\text{Term II}}\,.    
\end{aligned}
\end{equation}
The cross terms vanish because $\mathbb{E}(\mathrm{d}W^i_t)=0$. The two terms are dealt with separately:
\begin{itemize}
\item To deal with Term I, we first notice
\begin{equation}\label{eqn:velocity_control}
\begin{aligned}
&\left|F(t,\tilde{x}_i;\rho)-F(t,x_i;\rho_N)\right|\\
\leq &\left|F(t,\tilde{x}_i;\rho)-F(t,\tilde{x}_i;\widetilde{\rho}_N)\right|+\left|F(t,\tilde{x}_i;\widetilde{\rho}_N)-F(t,x_i;\rho_N)\right|\\
\leq &\underbrace{\left|F(t,\tilde{x}_i;\rho)-F(t,\tilde{x}_i;\widetilde{\rho}_N)\right|}_{\leq LW_2(\rho,\widetilde{\rho}_N)} + \underbrace{\left|F(t,\tilde{x}_i;\widetilde{\rho}_N)-F(t,x_i;\widetilde{\rho}_N)\right|}_{\leq L|\tilde{x}_i-x_i|}+\underbrace{\left|F(t,{x}_i;\widetilde{\rho}_N)-F(t,x_i;\rho_N)\right|}_{\leq LW_2(\rho_N,\widetilde{\rho}_N)}
\end{aligned}
\end{equation}
In this inequality, we repeatedly used the triangle inequality, and the last line assumes the Lipschitz condition of $F$ on both $\rho$ and $x$, with $L$ denoting the Lipschitz constant over both arguments. Plugging \eqref{eqn:velocity_control} into Term I, we obtain
\begin{equation}\label{eqn:term_1}
\mathrm{Term\ I}\leq C\mathbb{E}\left(|\tilde{x}_i(t)-x_i(t)|^2+W^2_2(\rho_N,\widetilde{\rho}_N)+W^2_2(\rho,\widetilde{\rho}_N)\right)\,,    
\end{equation}
where $C$ is a constant depending on the Lipschitz constants, and Young's inequality is used.
\item To deal with Term II, we use similar strategy. Assuming $D$ is Lipschitz with respect to both $\rho$ and $x$ with constant $L$, we have
\begin{equation*}\label{eqn:diffusion_control}
\begin{aligned}
&\left\|D(t,\tilde{x}_i;\rho)-D(t,x_i;\rho_N)\right\|_F\\
\leq &\left\|D(t,\tilde{x}_i;\rho)-D(t,\tilde{x}_i;\widetilde{\rho}_N)\right\|_F+\left\|D(t,\tilde{x}_i;\widetilde{\rho}_N)-D(t,x_i;\rho_N)\right\|_F\\
\leq &\underbrace{\left\|D(t,\tilde{x}_i;\rho)-D(t,\tilde{x}_i;\widetilde{\rho}_N)\right\|_F}_{\leq LW_2(\rho,\widetilde{\rho}_N)} + \underbrace{\left\|D(t,\tilde{x}_i;\widetilde{\rho}_N)-D(t,x_i;\widetilde{\rho}_N)\right\|_F}_{\leq L|\tilde{x}_i-x_i|}+\underbrace{\left\|D(t,{x}_i;\widetilde{\rho}_N)-D(t,x_i;\rho_N)\right\|_F}_{\leq LW_2(\rho_N,\widetilde{\rho}_N)}\,,
\end{aligned}
\end{equation*}
which in turn bound Term II as
\begin{equation}\label{eqn:term_2}
\mathrm{Term\ II}\leq C\mathbb{E}\left(|\tilde{x}_i(t)-x_i(t)|^2+W^2_2(\rho_N,\widetilde{\rho}_N)+W^2_2(\rho,\widetilde{\rho}_N)\right)\,,
\end{equation}
where $C$ is a constant depending on the Lipschitz constants.
\end{itemize}
Plugging \eqref{eqn:term_1} and \eqref{eqn:term_2} in \eqref{eqn:W_2_diff} and averaging over $i$, we derive:
\[
\begin{aligned}
&\frac{\mathrm{d}}{\mathrm{d}t}\left(\frac{1}{N}\sum^N_{i=1}\mathbb{E}\left(\left|\tilde{x}_i(t)-x_i(t)\right|^2\right)\right)\\
\leq &C\left(\frac{1}{N}\sum^N_{i=1}\mathbb{E}\left(\left|\tilde{x}_i(t)-x_i(t)\right|^2\right)+\mathbb{E}\left(W^2_2(\rho_N,\widetilde{\rho}_N)\right)+\mathbb{E}\left(W^2_2(\rho,\widetilde{\rho}_N)\right)\right)\\
\leq &C'\left(\frac{1}{N}\sum^N_{i=1}\mathbb{E}\left(\left|\tilde{x}_i(t)-x_i(t)\right|^2\right)+\mathbb{E}\left(W^2_2(\rho,\widetilde{\rho}_N)\right)\right)\\
\leq & C'\left(\frac{1}{N}\sum^N_{i=1}\mathbb{E}\left(\left|\tilde{x}_i(t)-x_i(t)\right|^2\right)\right)+\mathcal{O}\left(N^{-\gamma}\right)\,.
\end{aligned}
\]
In the second inequality, we absorb $W^2_2(\rho_N,\widetilde{\rho}_N)$ because $W^2_2(\rho_N,\widetilde{\rho}_N) \leq \frac{1}{N}\sum^N_{i=1}\mathbb{E}\left(\left|\tilde{x}_i(t)-x_i(t)\right|^2\right)$, and in the last, we bounded $\mathbb{E}\left(W^2_2(\rho,\widetilde{\rho}_N)\right)$ with  $\gamma > 0$ being a constant that depends on the dimension. This estimate comes from an off-the-shelf result~\cite{Fournier2015}, and will be presented in Lemma \ref{lem:vj_FP}.

Finally, applying the Gr\"onwall inequality, we conclude that $\tilde{x}_i(t)$ and $x_i(t)$ are close for all finite times when $N$ is sufficiently large, thereby confirming~\eqref{eqn:coupling_error}.

The above intuitive derivation largely depends on the Lipschitz condition of the drift coefficients $F$ and diffusion coefficients $D$. Unfortunately, most ensemble methods only have locally Lipschitz coefficients, and for such methods, care has to be given in calculations in different contexts. In particular, in the mean-field proof for EKI, a boot-strapping argument is called, and for EKS, the Ando-Hemmen lemma is used. The choice of these arguments is not unique. A new technique was recently introduced in a beautiful set of work in~\cite{vaes2024sharp} that utilizes the concept of stopping-time to enforce a Lipschitz condition before the stopping time, and this condition is matched together with the application of the Burkholder–Davis–Gundy inequality to pass to use of the Gr\"onwall inequality. It seems that the framework is drastically more general than the bootstrapping technique, but more investigation is still needed to understand the mechanism. Nevertheless, the coupling method strategy remains the backbone of all these proofs.

Throughout the section, various statistical quantities appeared multiple times. We unify the notation here. For a given probability density $\rho(t,x)$, we define expectations
\[
\mathbb{E}_{\rho_t}(x)=\int_{\mathbb{R}^d}x\rho(t,x)\rd x,\quad \mathbb{E}_{\rho_t}(\mathcal{G}(x))=\int_{\mathbb{R}^d}\mathcal{G}(x)\rho(t,x)\rd x\,,
\]
and the covariance matrices
\begin{equation}\label{eqn:cov_rho}
\Cov_{\rho_t} = \int_{\mathbb{R}^d}\left(x-\mathbb{E}_{\rho_t}(x)\right)\otimes \left(x-\mathbb{E}_{\rho_t}(x)\right)\rho\rd x\,,    
\end{equation}
\begin{equation}\label{eqn:cov_rhoG}
\mathrm{Cov}_{\rho_t,\mathcal{G}}=\Cov_{\mathcal{G},\rho_t}^\top=\int_{\mathbb{R}^d}\left(x-\mathbb{E}_{\rho_t}(x)\right)\otimes \left(\mathcal{G}(x)-\mathbb{E}_{\rho_t}(\mathcal{G}(x))\right)\rho(t,x)\rd x\,.
\end{equation}

Similarly, for an ensemble of particles $\{x_n\}^N_{n=1}$, we define the empirical means and covariance matrices
\begin{equation}\label{eqn:def_ensemble_cov}
\begin{aligned}
&\bar{x}=\frac{1}{N}\sum^N_{i=1}x_i,\quad \bar{\mathcal{G}}=\frac{1}{N}\sum^N_{i=1}\mathcal{G}(x_i)\,,\\
&\Cov_{x,x}=\frac{1}{N}\sum^N_{i=1}(x_i-\bar{x})\otimes (x_i-\bar{x})\,,\quad \text{and}\quad \Cov_{x,\mathcal{G}}=\frac{1}{N}\sum^N_{i=1}(x_i-\bar{x})\otimes (\mathcal{G}(x_i-\bar{\mathcal{G}})\,.    
\end{aligned}
\end{equation}

\subsection{Ensemble Kalman inversion}\label{sec:EKI}
This section reviews the EKI method~\cite{Iglesias_2013} and summarizes its results of the mean-field analysis~\cite{ding2019ensemble}.

\subsubsection{Algorithm}

Ensemble Kalman Inversion (EKI) is an algorithm that designs a dynamics aiming at driving the prior distribution to the posterior distribution in a finite time horizon. More specifically, a PDE will be designed so that its solution has the following form.
\begin{equation}\label{actualmu}
\rho(t,x)\propto \rho_{\text{prior}}(x)\exp\left(-t\Phi(x;y)\right)\,.
\end{equation}
Recall Bayes' theorem~\eqref{eqn:bayes}, it is immediate that
\[
\rho(t=0,x)=\rho_{\text{prior}}(x)\,,\quad \rho(t=1,x)=\rhotarg(x)\,.
\]
One proposal to achieve this is to implement the following PDE:
\begin{equation}\label{eqn:EKI_PDE}
\partial_t\rho=\mathcal{L}[\rho]=-\nabla_x\cdot\left(\left(y-\mathcal{G}(x)\right)^\top \Gamma^{-1}\mathrm{Cov}_{\mathcal{G},\rho_t}\rho\right)+\frac{1}{2}\mathrm{Tr}\left(\mathrm{Cov}_{\rho_t,\mathcal{G}}\Gamma^{-1}\mathrm{Cov}_{\mathcal{G},\rho_t}\mathcal{H}_x(\rho)\right)
\end{equation}
where $\mathcal{H}_x$ denotes the hessian in $x$.

Therefore, as a consequence, a Monte Carlo type solver that simulates~\eqref{eqn:EKI_PDE} provides samples drawn from the target distribution $\rhotarg$.

\begin{remark}
It is typically overlooked in the literature that in the most general setting,~\eqref{actualmu} does not solve~\eqref{eqn:EKI_PDE} unless the forward map is linear. Namely, only when $\mathcal{G}(x)=Ax$ and thus
$\Phi\left(x;y\right)=\frac{1}{2}\left(x^\dagger-x\right)^\top A^\top \Gamma^{-1}A(x^\dagger-x)$ where we denote $y=Ax^\dagger$,~\eqref{actualmu} solves~\eqref{eqn:EKI_PDE}, and thus the PDE solver generates samples from $\rhotarg$ at $t=1$. For general nonlinear $\mathcal{G}$, this is only an approximation.

Indeed, as proved in~\cite{ding2020ensemblecorrect}, the correct PDE, besides having the $\mathcal{L}$ operator defined as in~\eqref{eqn:EKI_PDE}, should also make use of a correction term:
\begin{equation*}\label{eqn:muPDE_nonlinear}
\partial_t\rho(t,x)=\mathcal{L}\left[\rho(t,x)\right]+\mathcal{R}(t,x;\rho)\rho(t,x)\,,
\end{equation*}
where $\mathcal{R}(t,x;\rho)$ has a complicated nonlinear form that gets expanded out in~\cite{ding2020ensemblecorrect}. This form strongly suggests that a sampling strategy should have all particles weighted with the weight adjusted according to $\mathcal{R}$.

Depending on the size of $\mathcal{R}$, EKI may produce samples that are $O(1)$ away from the target distribution $\rhotarg$. It is still an open problem to estimate and control this weight term $\mathcal{R}$, but some discussions can be found in~\cite{Ernst,ding2020ensemblecorrect,law_tembine_tempone}.
\end{remark}

According to~\cite{ksendal2003}, the SDE representation of \eqref{eqn:EKI_PDE}, the associated System B is:
\begin{equation}\label{eqn:EKI_ancillary}
\rd \tilde{x}_i(t)=\Cov_{\rho_t,\mathcal{G}}\Gamma^{-1}\left(y-\MCG(\tilde{x}_i(t))\right)\rd t+\Cov_{\rho_t,\mathcal{G}}\Gamma^{-\frac{1}{2}}\rd W^i_t\,.
\end{equation}

As discussed in Section~\ref{sec:unif_analysis}, System B cannot be realized numerically. To turn it into an algorithm, the groundtruth covariance $\Cov_{\rho_t,\mathcal{G}}$ is replaced by the ensemble covariance matrix $\Cov_{x_t,\mathcal{G}_t}$, and we obtain the new system, the associated System C:
\begin{equation}\label{eqn:SDE_general}
\rd x_i(t)=\Cov_{x_t,\mathcal{G}_t}\Gamma^{-1}\left(y-\MCG(x_i(t))\right)\rd t+\Cov_{x_t,\mathcal{G}_t}\Gamma^{-\frac{1}{2}}\rd W^i_t\,,
\end{equation}
where the covariance matrices are self-generated defined in~\eqref{eqn:def_ensemble_cov}.

Both System B and System C set the samples at the initial time $\{x_i(t=0)\}$ i.i.d. drawn from the prior distribution $\rho(t=0,\cdot)=\rho_{\mathrm{prior}}$. 

The algorithm of EKI is to further discretize system C using the Euler-Maruyama method with time step $h>0$, as summarized in Algorithm \ref{alg:EKI}. The algorithm is terminated at $M=1/h$ time step to ensure that the system runs to the pseudo-time $t=1$.

\begin{algorithm}[htb]
\caption{\textbf{Ensemble Kalman Inversion}}\label{alg:EKI}
\begin{algorithmic}
\State \textbf{Preparation:}
\State 1. Input: $N\gg1$; $h$ (Stepsize); $M=1/h$ (stopping index); $\Gamma$; and $y$ (data).
\State 2. Initial: $\{x^0_i\}^N_{i=1}$ sampled from initial distribution $\rho_\ini$.

\State \textbf{Run: } Set time step $m=0$;
\State \textbf{While} $m<M$:

\noindent 1. Define empirical means and covariance:
\begin{equation*}\label{eqn:ensemble_alg}
\begin{aligned}
\quad&\overline{x}^m=\frac{1}{N}\sum^N_{i=1}x^m_i\,, \overline{\MCG}^m=\frac{1}{N}\sum^N_{i=1}\MCG(x^m_i)\ \text{and} \ \Cov_{x^m,\mathcal{G}^m}=\frac{1}{N}\sum^N_{i=1}\left(x^m_i-\overline{x}^m\right)\otimes \left(\MCG(x^m_i)-\overline{\MCG}^m\right)\,.
\end{aligned}
\end{equation*}

\noindent2. Update ensemble particles ($\forall 1\leq i\leq N$)
\begin{equation*}\label{eqn:update_uujn}
\begin{aligned}
&\quad x^{m+1}_{\ast,i}=x^m_i+h\Cov_{x^m,\mathcal{G}^m}\Gamma^{-1}(y-\MCG(x^m_i))\,,\\
&\quad x^{m+1}_i=x^{m+1}_{\ast,i}+\sqrt{h}\Cov_{x^m,\mathcal{G}^m}\Gamma^{-1/2}\xi^{m}_i,\quad \text{with}\quad \xi^{m}_i\sim\mathcal{N}(0,\mathrm{I})\,.
\end{aligned}
\end{equation*}
\noindent 3. Set $m\to m+1$
\State \textbf{end}
\State \textbf{Output:} Ensemble particles $\{x^M_i\}^N_{i=1}$.
\end{algorithmic}
\end{algorithm}

\subsubsection{Mean-field analysis for EKI}
The mean-field derivation of EKI amounts to equating~\eqref{eqn:SDE_general} with its underlying PDE~\eqref{eqn:EKI_PDE}. The following theorem summarizes the result.
\begin{theorem}[Mean-field limit for EKI]\label{thm:mean_fieldEKI}
Suppose that $\mathcal{G}$ satisfies the weak nonlinear assumption\footnote{For technical reasons, we can only prove the mean-field under the weak nonlinearity assumption. In particular, we assume that $\mathcal{G}$ is weak nonlinear, meaning that there is a matrix $A\in\mathbb{R}^{d\times d}$ such that $\mathcal{G}(x)=Ax+\mathrm{b}(x)$ where $\mathrm{b}(x):\mathbb{R}^d\to\mathbb{R}^{d}$ is a smooth bounded function satisfying $\text{Range}(\mathrm{b})\perp_{\Gamma^{-1}}\text{Range}(A),\ \left|\mathrm{b}(x)\right|+\left|\nabla_x\mathrm{b}(x)\right|\leq B$ with some constant $B>0$ in $\mathbb{R}^d$, and $a\perp_{\Gamma^{-1}}b$ means $a^\top \Gamma^{-1}b=0$ and $a^\top$ is to take transpose of $a$.}. Denote $\rho_{N}(t,x)$ the ensemble distribution of $\{x_i(t)\}$ as defined in~\eqref{eqn:approx} and $\rho(t,x)$ the weak solution to \eqref{eqn:EKI_PDE} with $\rho(0,x)=\rho_{\mathrm{prior}}$, then $\rho_N\to\rho$ in the Wasserstein sense and the weak sense for all finite time $t$:
\begin{itemize}
    \item For any $\epsilon>0$, there is a constant $C_\epsilon(t)$ independent of $N$ such that: 
\begin{equation}\label{eqn:decayW2}
\mathbb{E}\left(W_2(\rho_{N}(t,x),\rho(t,x))\right)\leq C_\epsilon(t)
\left\{
\begin{aligned}
&N^{-\frac{1}{4}},\ d\leq 4\\
&N^{-1/d},\ d>4
\end{aligned}\,.
\right.
\end{equation}
\item For any $l$-Lipschitz test function $g$, for any $\epsilon>0$, there is a constant $C_\epsilon$ depending on $\epsilon$, $l$ and $t$ and independent of $N$ such that:
\begin{equation}\label{weakconvergence}
\left(\Eb\left|\int g(x)\left[\rho_N(t,x)\rd x-\rho(x,t)\rd x\right] \right|^2\right)^{\frac{1}{2}}
 \leq C_\epsilon N^{-\frac{1}{2}+\epsilon}\,.
\end{equation}
\end{itemize}
\end{theorem}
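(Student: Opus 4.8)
The plan is to run the three-system coupling scheme of Section~\ref{sec:unif_analysis} on the EKI dynamics, with System~A the PDE~\eqref{eqn:EKI_PDE}, System~B the auxiliary ensemble $\{\widetilde{x}_i\}$ of~\eqref{eqn:EKI_ancillary} driven by the exact covariance $\Cov_{\rho_t,\mathcal{G}}$, and System~C the algorithmic ensemble $\{x_i\}$ of~\eqref{eqn:SDE_general} driven by the self-generated covariance $\Cov_{x_t,\mathcal{G}_t}$. A structural remark that makes the bookkeeping clean: in System~B the field $\rho$ is the deterministic solution of~\eqref{eqn:EKI_PDE}, so the $\widetilde{x}_i$ are genuinely i.i.d. copies of a single diffusion; consequently the distance from $\rho$ to $\widetilde{\rho}_N$ is exactly the empirical-measure error of i.i.d. samples, handled off the shelf by Fournier--Guillin~\cite{Fournier2015} once moment bounds are available, producing the two regimes $N^{-1/2+\epsilon}$ ($d\le 4$) and $N^{-2/d}$ ($d>4$) recorded in~\eqref{eqn:decayW2}. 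Everything else reduces to the coupling estimate
\begin{equation*}
\sup_{0\le s\le t}\frac{1}{N}\sum_{j=1}^{N}\mathbb{E}\bigl|\widetilde{x}_j(s)-x_j(s)\bigr|^2\ \le\ C(t)\,N^{-1}\ \xrightarrow{N\to\infty}\ 0 .
\end{equation*}

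Before coupling, the essential preparatory step — and the one I expect to be the main obstacle — is a set of bounds uniform in $N$ and locally uniform in $t$: for every $p\ge 1$, $\sup_{[0,t]}\tfrac1N\sum_i\mathbb{E}|x_i(s)|^p\le C(p,t)$, the same for $\{\widetilde{x}_i\}$, and hence $\mathbb{E}\|\Cov_{x_t,\mathcal{G}_t}\|^p\le C(p,t)$. The coefficients in~\eqref{eqn:SDE_general} are only locally Lipschitz — $\Cov_{x_t,\mathcal{G}_t}$ is quadratic in the ensemble — so a naive Gr\"onwall fails and these bounds cannot be taken for granted; this is exactly where the weak-nonlinearity hypothesis $\mathcal{G}(x)=Ax+\mathrm{b}(x)$ with $A^\top\Gamma^{-1}\mathrm{b}\equiv 0$ enters. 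The orthogonality kills the cross terms and makes the data-fitting drift dissipative on the ensemble spread $e_i=x_i-\bar x$: the empirical covariance satisfies a Riccati-type differential inequality of the form $\tfrac{d}{dt}\Cov_{x_t,x_t}\preceq -2\,\Cov_{x_t,x_t}A^\top\Gamma^{-1}A\,\Cov_{x_t,x_t}+C$, with a bounded correction from $\mathrm{b}$, which forces $\Cov_{x_t,x_t}$ (hence $\Cov_{x_t,\mathcal{G}_t}$) to stay bounded; the ensemble mean $\bar x$ is controlled separately, and higher moments follow by induction on $p$ (the bootstrapping). The same argument applies to System~B.

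With the a priori bounds in hand, I would subtract~\eqref{eqn:EKI_ancillary} from~\eqref{eqn:SDE_general}, apply It\^o to $|\widetilde{x}_i-x_i|^2$, and average over $i$. The drift difference splits into a ``covariance replacement'' part $\bigl(\Cov_{\rho_t,\mathcal{G}}-\Cov_{x_t,\mathcal{G}_t}\bigr)\Gamma^{-1}\bigl(y-\mathcal{G}(\widetilde{x}_i)\bigr)$ and an ``argument'' part $\Cov_{x_t,\mathcal{G}_t}\Gamma^{-1}\bigl(\mathcal{G}(\widetilde{x}_i)-\mathcal{G}(x_i)\bigr)$; the latter is $\le C\|\Cov_{x_t,\mathcal{G}_t}\|\,|\widetilde{x}_i-x_i|$ since $\|\nabla\mathcal{G}\|\le\|A\|+B$. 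For the former, write $\Cov_{\rho_t,\mathcal{G}}-\Cov_{x_t,\mathcal{G}_t}=\bigl(\Cov_{\rho_t,\mathcal{G}}-\Cov_{\widetilde{x}_t,\mathcal{G}_t}\bigr)+\bigl(\Cov_{\widetilde{x}_t,\mathcal{G}_t}-\Cov_{x_t,\mathcal{G}_t}\bigr)$. The first bracket is a centered empirical-moment fluctuation of i.i.d. samples, so — unlike a generic $W_2$ comparison — its $L^2$ size is $O(N^{-1/2})$ by the law of large numbers together with the moment bounds; the second bracket, after expanding the defining quadratics, is $\le C\bigl(\tfrac1N\sum_j|\widetilde{x}_j-x_j|^2\bigr)^{1/2}$ times a moment factor. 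The diffusion difference is treated the same way through the It\^o isometry. Taking expectations and using Cauchy--Schwarz and the a priori bounds yields
\begin{equation*}
\frac{d}{dt}\,\Bigl(\tfrac1N\sum_i\mathbb{E}|\widetilde{x}_i-x_i|^2\Bigr)\ \le\ C(t)\,\tfrac1N\sum_i\mathbb{E}|\widetilde{x}_i-x_i|^2\ +\ C(t)\,N^{-1},
\end{equation*}
and Gr\"onwall gives $\tfrac1N\sum_i\mathbb{E}|\widetilde{x}_i-x_i|^2\le C(t)N^{-1}$.

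Finally I assemble the two claims. For~\eqref{eqn:decayW2}, the index pairing gives $\mathbb{E}\,W_2(\rho_N,\widetilde{\rho}_N)\le\bigl(\tfrac1N\sum_i\mathbb{E}|x_i-\widetilde{x}_i|^2\bigr)^{1/2}\le C(t)N^{-1/2}$, and the triangle inequality $W_2(\rho_N,\rho)\le W_2(\rho_N,\widetilde{\rho}_N)+W_2(\widetilde{\rho}_N,\rho)$ combined with the Fournier--Guillin estimate for $\mathbb{E}\,W_2(\widetilde{\rho}_N,\rho)$ reproduces the stated rates. For~\eqref{weakconvergence}, decompose $\int g\,(\rho_N-\rho)=\int g\,(\rho_N-\widetilde{\rho}_N)+\int g\,(\widetilde{\rho}_N-\rho)$: the first term is $\le\tfrac{l}{N}\sum_i|x_i-\widetilde{x}_i|\le l\bigl(\tfrac1N\sum_i|x_i-\widetilde{x}_i|^2\bigr)^{1/2}$, so its $L^2$ norm is $\le lC(t)N^{-1/2}$ by the coupling bound, while the second term equals $\tfrac1N\sum_i\bigl(g(\widetilde{x}_i)-\mathbb{E}g(\widetilde{x}_i)\bigr)$, an average of i.i.d. mean-zero variables of finite variance (by the moment bound), so its $L^2$ norm is $\le lC(t)N^{-1/2}$; adding the two gives~\eqref{weakconvergence}, with $\epsilon$ absorbing the logarithmic losses from Fournier--Guillin and from the $p$-dependence of the moment bounds. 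The whole argument is the coupling-plus-Gr\"onwall template of Section~\ref{sec:unif_analysis}; the only genuinely delicate ingredient is the uniform-in-$N$ moment and covariance control through the Riccati/bootstrapping step, which is what forces the weak-nonlinearity assumption.
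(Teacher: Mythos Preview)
Your overall architecture—the three-system coupling, Fournier--Guillin for $\mathbb{E}\,W_2(\widetilde{\rho}_N,\rho)$, the triangle inequality for~\eqref{eqn:decayW2}, and the Lipschitz/LLN split for~\eqref{weakconvergence}—is exactly what the paper does. The discrepancy is in how you obtain the coupling bound $\tfrac1N\sum_i\mathbb{E}|\widetilde{x}_i-x_i|^2=O(N^{-1})$.

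The differential inequality you write down does not close. After It\^o and averaging, the ``argument'' part alone already produces a term of the form $\mathbb{E}\bigl[\|\Cov_{x_t,\mathcal{G}_t}\|\,|\Delta_i|^2\bigr]$, and the ``covariance replacement'' part produces $\mathbb{E}\bigl[M\cdot(\tfrac1N\sum_j|\Delta_j|^2)^{1/2}\cdot|\Delta_i|\bigr]$ with $M$ a random polynomial in the ensemble moments. Neither factor is deterministic: $\|\Cov_{x_t,\mathcal{G}_t}\|$ and $M$ are random and correlated with the $\Delta_j$'s, so Cauchy--Schwarz plus a priori moment bounds gives you $(\mathbb{E}|\Delta_i|^{2q})^{1/q}$-type quantities, not $\mathbb{E}|\Delta_i|^2$. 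You cannot factor out a deterministic $C(t)$ in front of $D(t)=\tfrac1N\sum_i\mathbb{E}|\Delta_i|^2$, and the clean linear Gr\"onwall $D'\le C(t)D+C(t)N^{-1}$ is not available. This is precisely the obstruction the paper flags (``we do not have the Lipschitz continuity against $\rho$'').

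The paper's remedy is a different bootstrapping from the one you describe. Your bootstrapping is on the a priori moment hierarchy (induction on $p$), which is indeed needed. The paper additionally bootstraps on the \emph{coupling estimate itself} (Lemma~\ref{lem:small}): assuming $\mathbb{E}|\Delta_i|^2\le N^{-\gamma}$, one uses H\"older to separate the random moment factors at the cost of a square root and deduces $\mathbb{E}|\Delta_i|^2\le C_\epsilon N^{-1/2-\gamma/2+\epsilon}$; iterating from $\gamma=0$ (second-moment boundedness) drives the exponent toward $1$. This is why the final coupling rate is $N^{-1+\epsilon}$ rather than $N^{-1}$—the $\epsilon$ in~\eqref{weakconvergence} is not a Fournier--Guillin artifact but the residue of this iteration never quite reaching $\gamma=1$. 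Your proposal places the bootstrapping one level too low and then asserts a Gr\"onwall step that the random coefficients do not support.
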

We note the convergence is in expectation sense. Different realizations of Brownian motion provides different configuration of $\rho_N$. In expectation, $\rho_N$ convergences to $\rho$, and the expectation sign takes expectation over the randomness from Brownian motions~\eqref{eqn:approx}.

The theorem clearly suggests that the Wasserstein convergence rate sees the curse of dimensionality. For $d\geq 4$, the rate is slower than the classical Monte Carlo rate of $\frac{1}{\sqrt{N}}$. We believe that the rate is optimal within the current machinery, see~\cite{Fournier2015}. On the other hand, the convergence in the weak sense (tested by a Lipschitz function) is always as strong as one would expect from an MC sampling. It should be noted that when $g(x)=x^p$ for any $p\in\mathbb{N}_+$, we expect convergence of the moments\footnote{$g(x)=x^p$ is not global Lipschitz, so a strategy that regularizes the test function through truncation is necessary to deploy~\eqref{weakconvergence}. We leave out details.}.

The main strategy is already discussed in Section \ref{sec:unif_analysis}: the proof boils down to showing both~\eqref{eqn:FG_convergence} and~\eqref{eqn:coupling_error}, as presented below.

Firstly, we note that \eqref{eqn:FG_convergence} is proved by calling~\cite[Theorem 1]{Fournier2015}. In the current context, 
\begin{lemma}\label{lem:vj_FP}
Let $\{\tilde{x}_i(t)\}$ solve \eqref{eqn:EKI_ancillary} with $\rho(t,x)$ being the solution to~\eqref{eqn:EKI_PDE}. Further denote $\tilde{\rho}_N(t,x)$ the ensemble distribution of $\{\tilde{x}_i(t)\}$ as defined in \eqref{eqn:rho_n_tilde}, then, under the condition in Theorem~\ref{thm:mean_fieldEKI}, there is a constant $C(t)$ independent of $N$ such that for all finite $t$:
\begin{equation}\label{eqn:vj_FP2}
\mathbb{E}\left(W^2_2(\widetilde{\rho}_N(t,x),\rho(t,x))\right)\leq C(t)N^{-\min\{1/2,2/d\}}\,.
\end{equation}
\end{lemma}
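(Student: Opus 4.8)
The plan is to invoke \cite[Theorem 1]{Fournier2015} essentially as a black box, but to do so we must first verify that the auxiliary system \eqref{eqn:EKI_ancillary} falls into the class of McKean--Vlasov SDEs to which that theorem applies, with coefficients enjoying uniform-in-time moment bounds and Lipschitz estimates. Concretely, \eqref{eqn:EKI_ancillary} has the form $\rd\tilde x_i = b(t,\tilde x_i;\rho_t)\,\rd t + \sigma(t,\tilde x_i;\rho_t)\,\rd W^i_t$ with $b(t,x;\rho)=\Cov_{\rho_t,\mathcal{G}}\Gamma^{-1}(y-\mathcal{G}(x))$ and $\sigma(t,x;\rho)=\Cov_{\rho_t,\mathcal{G}}\Gamma^{-1/2}$; note the diffusion coefficient is independent of $x$ and depends on $\rho$ only through a second moment of the law. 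The first step is therefore to establish a priori bounds on $\rho(t,\cdot)$: using the weak-nonlinearity assumption $\mathcal{G}(x)=Ax+\mathrm{b}(x)$ with $\mathrm{b}$ bounded and $\text{Range}(\mathrm{b})\perp_{\Gamma^{-1}}\text{Range}(A)$, the orthogonality makes the effective drift $\Cov_{\rho_t,\mathcal{G}}\Gamma^{-1}(y-\mathcal{G}(x))$ behave like the linear Kalman drift up to bounded perturbations, which keeps $\mathbb{E}_{\rho_t}|x|^2$ (hence $\Cov_{\rho_t}$ and $\Cov_{\rho_t,\mathcal{G}}$) controlled on every finite interval $[0,t]$; I would prove a Grönwall bound $\sup_{s\le t}\mathbb{E}_{\rho_s}|x|^{k}\le C(t,k)$ for enough moments $k$ to feed into Fournier--Guillin (which needs a moment strictly above $4$, or above $d/(d-4)$ etc., to get the stated rates).

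Second, with those moment bounds in hand, the coefficients $b$ and $\sigma$ restricted along the flow are Lipschitz in $x$ (since $\nabla_x\mathcal{G}=A+\nabla_x\mathrm{b}$ is bounded and $\Cov_{\rho_t,\mathcal{G}}$ is a bounded matrix uniformly on $[0,t]$) and Lipschitz in the measure argument with respect to $W_2$ (since $\rho\mapsto\Cov_{\rho,\mathcal{G}}$ is a difference of products of first moments, hence $W_2$-Lipschitz on sets of uniformly bounded second moment). Thus \eqref{eqn:EKI_ancillary} is a genuine nonlinear (McKean--Vlasov) SDE whose associated nonlinear Fokker--Planck equation is exactly \eqref{eqn:EKI_PDE}; the i.i.d. particle system with the same law is the propagation-of-chaos system, and \eqref{eqn:vj_FP2} is precisely the empirical-measure convergence rate of \cite[Theorem 1]{Fournier2015}, which gives $\mathbb{E}\,W_2^2(\tilde\rho_N,\rho)\lesssim N^{-1/2}$ in low dimension and $N^{-2/d}$ in high dimension provided the moment of order $>4$ (resp. of the appropriate order) is finite; taking square roots and using Jensen yields \eqref{eqn:vj_FP2} in the stated form $\mathbb{E}\,W_2\lesssim N^{-\min\{1/2,2/d\}}$.

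The main obstacle is the a priori moment control, i.e.\ ruling out finite-time blow-up of $\mathbb{E}_{\rho_t}|x|^2$ and of the covariances $\Cov_{\rho_t,\mathcal{G}}$ that appear multiplicatively in both drift and diffusion. Because the drift coefficient grows linearly in $x$ but with a coefficient $\Cov_{\rho_t,\mathcal{G}}\Gamma^{-1}A^\top$ that itself depends on the evolving moments, a naive Grönwall estimate risks a quadratic-in-moment feedback. The weak-nonlinearity structure is exactly what defuses this: writing $\mathcal{G}(x)=Ax+\mathrm{b}(x)$ and using $A^\top\Gamma^{-1}\mathrm{b}=0$, one can mimic the exactly solvable linear-Gaussian EKI dynamics (where the covariance satisfies a closed matrix Riccati-type ODE that decays) and treat the $\mathrm{b}$-contributions as bounded forcing; I would first derive the evolution equation for $\Cov_{\rho_t}$ from \eqref{eqn:EKI_PDE}, show it stays bounded (indeed contracting) up to $O(B)$ corrections, then close the moment estimate. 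Once this structural point is handled, the remainder is the routine verification that the hypotheses of \cite[Theorem 1]{Fournier2015} hold and a direct citation; I would also note that the constant $C(t)$ may grow in $t$ but is finite for every finite $t$, which is all Theorem~\ref{thm:mean_fieldEKI} requires.
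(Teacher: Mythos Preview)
Your core approach matches the paper's: both reduce the lemma to a direct citation of \cite[Theorem~1]{Fournier2015}. But you are over-engineering the verification. Once the PDE solution $\rho$ is \emph{fixed}, the auxiliary particles $\tilde{x}_i$ in \eqref{eqn:EKI_ancillary} are driven by coefficients depending only on $(t,\tilde{x}_i)$ and the given $\rho_t$---not on each other or on any empirical measure---so they are i.i.d.\ with common law $\rho_t$ by construction, and Fournier--Guillin applies to the empirical measure of i.i.d.\ samples with no further structural input. Your Lipschitz-in-$W_2$ estimate on $\rho\mapsto\Cov_{\rho,\mathcal{G}}$ and the McKean--Vlasov well-posedness discussion are not needed for this lemma; those estimates belong to the System~B versus System~C comparison (Lemma~\ref{lem:small} and \eqref{eqn:coupling_error}), not to System~A versus System~B. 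The only substantive check the paper leaves implicit is the moment bound $\sup_{s\le t}\int|x|^q\rho_s(\rd x)<\infty$ for $q$ large enough to trigger the Fournier--Guillin rates, and your plan to extract this from the weak-nonlinearity structure (orthogonality $A^\top\Gamma^{-1}\mathrm{b}=0$ reducing the covariance evolution to linear Kalman plus bounded forcing) is the right idea. One arithmetic slip: Jensen applied to $\mathbb{E}[W_2^2]\lesssim N^{-\alpha}$ yields $\mathbb{E}[W_2]\lesssim N^{-\alpha/2}$, not $N^{-\alpha}$, so be precise about which form of the Fournier--Guillin bound you quote when matching the exponents in \eqref{eqn:vj_FP2}.
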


Now, we prove \eqref{eqn:coupling_error}. It should be noted that the calculation in~\eqref{eqn:velocity_control} does not apply directly: we do not have the Lipschitz continuity against $\rho$. In~\cite{ding2019ensemble}, a bootstrapping argument was deployed. To be more precise, we define the differences
\[
\Delta_i(t)=x_i(t)-\tilde{x}_i(t)\,.
\]
By subtracting the two equations~\eqref{eqn:EKI_ancillary} and~\eqref{eqn:SDE_general}, a conditional convergence can be shown, namely:
\begin{lemma}\label{lem:small}
If there is $\gamma\geq 0$ so that
\[
\mathbb{E}(|\Delta_i(t)|^2)\leq N^{-\gamma}\,,\quad\forall i\,,
\]
then for any $\epsilon>0$, there is a constant $C_\epsilon$ so that
\[
\mathbb{E}(|\Delta_i(t)|^2)\leq C_\epsilon N^{-1/2-\gamma/2+\epsilon}\,,\quad\forall i\,.
\]
\end{lemma}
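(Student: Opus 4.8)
The plan is to run a Gr\"onwall estimate on the coupled differences $\Delta_i(t) = x_i(t) - \tilde x_i(t)$ and then \emph{bootstrap} the exponent using the law of large numbers applied to the auxiliary system (System B). Recall that $\tilde x_i$ solves \eqref{eqn:EKI_ancillary} with the true covariance $\Cov_{\rho_t,\mathcal G}$, while $x_i$ solves \eqref{eqn:SDE_general} with the empirical covariance $\Cov_{\tilde x_t,\mathcal G_t}$ (or rather $\Cov_{x_t,\mathcal G_t}$). Subtracting the two SDEs and applying It\^o's formula to $|\Delta_i(t)|^2$, I would organize the drift and diffusion discrepancies into three kinds of terms, in the spirit of \eqref{eqn:velocity_control}: (a) terms where the covariance matrix is the same but the evaluation point differs, i.e.\ $\Cov_{x_t,\mathcal G_t}\Gamma^{-1}(\mathcal G(\tilde x_i) - \mathcal G(x_i))$, which under the weak-nonlinearity assumption ($\mathcal G = Ax + \mathrm b$ with $\mathrm b$ bounded and Lipschitz) is controlled by $\|\Cov_{x_t,\mathcal G_t}\|\,|\Delta_i|$; (b) terms where the evaluation point is the same but one covariance is empirical and the other is $\Cov_{\rho_t,\mathcal G}$, i.e.\ $(\Cov_{x_t,\mathcal G_t} - \Cov_{\rho_t,\mathcal G})\Gamma^{-1}(y - \mathcal G(\tilde x_i))$; and (c) the analogous differences in the diffusion coefficient, handled via the It\^o isometry / BDG. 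The crux is then splitting the covariance difference in (c) and (b) through the middle agent $\widetilde\rho_N$: $\Cov_{x_t,\mathcal G_t} - \Cov_{\rho_t,\mathcal G} = (\Cov_{x_t,\mathcal G_t} - \Cov_{\tilde x_t,\mathcal G_t}) + (\Cov_{\tilde x_t,\mathcal G_t} - \Cov_{\rho_t,\mathcal G})$, where the first piece is again Lipschitz in $\{\Delta_j\}$ (it is a quadratic-type functional of the particle positions, so its variation is bounded by $\frac1N\sum_j |\Delta_j|$ times a factor controlled by the second moments of the particles) and the second piece is a pure \emph{fluctuation} of i.i.d.-driven quantities around their mean.

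Concretely, after taking expectations and using Cauchy--Schwarz to absorb the boundedness of $\mathrm b$ and the a priori moment bounds on $\tilde x_i$ (which follow since System B is driven by the nice PDE solution $\rho$ and standard SDE moment estimates apply), I expect to arrive at an inequality of the schematic form
\begin{equation*}
\frac{d}{dt}\,\mathbb E|\Delta_i(t)|^2 \;\le\; C(t)\,\mathbb E|\Delta_i(t)|^2 \;+\; C(t)\,\frac1N\sum_{j}\mathbb E|\Delta_j(t)|^2 \;+\; C(t)\,\mathcal E_N(t)\,,
\end{equation*}
where $\mathcal E_N(t) := \mathbb E\|\Cov_{\tilde x_t,\mathcal G_t} - \Cov_{\rho_t,\mathcal G}\|^2$ is the variance of the empirical covariance of System B around the true covariance. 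Averaging over $i$, setting $u(t) = \frac1N\sum_i \mathbb E|\Delta_i(t)|^2$, and applying Gr\"onwall gives $u(t) \le C(t)\int_0^t \mathcal E_N(s)\,ds$ (the initial data vanish since $x_i(0) = \tilde x_i(0)$). Because the $\tilde x_j$ are exchangeable and each $\tilde x_j(s)$ has a density with controlled moments, a law-of-large-numbers / CLT-type estimate — essentially the variance of an empirical second moment of $N$ (weakly correlated, exchangeable) samples — yields $\mathcal E_N(s) \lesssim N^{-1}$ \emph{were the $\tilde x_j$ independent}; but they are only independent pairwise-conditionally, so one instead bounds $\mathcal E_N$ using the hypothesis. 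This is exactly where the bootstrap enters: the hypothesis $\mathbb E|\Delta_i(s)|^2 \le N^{-\gamma}$ lets us replace $\Cov_{x_s,\mathcal G_s}$-type quantities by $\Cov_{\tilde x_s,\mathcal G_s}$-type quantities up to an $O(N^{-\gamma})$ error, and the genuinely i.i.d.\ fluctuation piece contributes $O(N^{-1})$. Combining, $\mathcal E_N(s) \lesssim N^{-1} + N^{-\gamma}$... but that is not quite enough to gain; the correct bookkeeping is to note that the \emph{cross terms} between the $O(N^{-1/2})$ fluctuation and the $O(N^{-\gamma/2})$ size of $\Delta$ in the It\^o expansion produce the geometric-mean rate $N^{-1/2-\gamma/2}$, plus an arbitrarily small loss $\epsilon$ from absorbing logarithmic or lower-order factors (and, when $d>4$, from the weaker rate in Lemma~\ref{lem:vj_FP}).

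The main obstacle, I expect, is the careful treatment of the covariance-fluctuation term $\mathcal E_N$ and its interaction with $\Delta$. Two things make it delicate. First, the covariance is a \emph{quadratic} functional of the particles, so its perturbation under $x \mapsto \tilde x$ is not simply Lipschitz in $|\Delta|$ uniformly — one needs moment control on the particles themselves, and those moment bounds for System C must be established (or bootstrapped) alongside, not assumed, since a priori a self-interacting empirical covariance could blow up; the weak-nonlinearity assumption ($\mathrm b$ bounded) is precisely what tames this, because it keeps $\|\Cov_{x_t,\mathcal G_t}\|$ from growing uncontrollably relative to the linear part $A\Cov_{x,x}A^\top$. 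Second, the fluctuation estimate for $\mathbb E\|\Cov_{\tilde x_t,\mathcal G_t} - \Cov_{\rho_t,\mathcal G}\|^2$ requires a genuine variance computation for degree-two empirical functionals of exchangeable variables; one has to be honest about the fact that the $\tilde x_j(t)$ are i.i.d.\ (they are — each is an independent copy driven by an independent Brownian motion with coefficients depending only on the deterministic $\rho(t,\cdot)$), so this piece really is a clean $O(N^{-1})$ $U$-statistic-type bound with constants depending on $\sup_{s\le t}\mathbb E|\tilde x_j(s)|^4$ and on $B$. Once both ingredients are in place, the Gr\"onwall-then-bootstrap mechanics are routine, and iterating Lemma~\ref{lem:small} starting from the trivial bound $\gamma = 0$ drives the exponent to the fixed point $1/2$ (up to $\epsilon$), which is what feeds into \eqref{eqn:coupling_error} and hence the theorem.
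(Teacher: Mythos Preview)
Your overall strategy---subtract the two SDEs, apply It\^o, split the covariance discrepancy through the intermediary $\widetilde{\rho}_N$, and feed the i.i.d.\ fluctuation of System~B as a source into a Gr\"onwall estimate---is exactly the approach taken in the paper (which defers details to~\cite{ding2019ensemble}). You also correctly locate the mechanism that produces the exponent $\tfrac{1}{2}+\tfrac{\gamma}{2}$: the drift cross-term $\Delta_i\cdot(\Cov_{\tilde x_t,\mathcal{G}_t}-\Cov_{\rho_t,\mathcal{G}})\Gamma^{-1}(y-\mathcal{G}(\tilde x_i))$, bounded via Cauchy--Schwarz by $(\mathbb{E}|\Delta_i|^2)^{1/2}\cdot O(N^{-1/2})$.

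There is, however, a real gap in the argument as written. First, a small point: the $\tilde x_j$ \emph{are} genuinely i.i.d.\ (each is driven by an independent Brownian motion with coefficients depending only on the deterministic $\rho$), so your $\mathcal{E}_N=\mathbb{E}\|\Cov_{\tilde x_t,\mathcal{G}_t}-\Cov_{\rho_t,\mathcal{G}}\|^2$ is cleanly $O(N^{-1})$; the hedge ``only independent pairwise-conditionally'' is wrong and you correctly retract it later. The more serious issue is that your schematic Gr\"onwall
\[
\frac{\rmd}{\rmd t}\,u(t)\;\le\;C(t)\,u(t)+C(t)\,\mathcal{E}_N(t)
\]
is too clean: if it held with $C(t)$ bounded independently of $N$ and $\mathcal{E}_N\sim N^{-1}$, you would obtain $u(t)\lesssim N^{-1}$ in one step and the bootstrap would be superfluous. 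You sense this (``that is not quite enough to gain''---in fact it would be \emph{too much}) but do not diagnose the obstruction.

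The obstruction is that the ``Lipschitz-in-$\Delta$'' contributions do not produce a bounded Gr\"onwall coefficient. Both the factor $\|\Cov_{x_t,\mathcal{G}_t}\|$ multiplying $|\Delta_i|$ in your term~(a), and the perturbation $\Cov_{x_t,\mathcal{G}_t}-\Cov_{\tilde x_t,\mathcal{G}_t}$ in your term~(b), carry random weights of the form $(1+|x_j|+|\tilde x_j|)$ coming from the quadratic structure of the covariance. After taking expectation you face quantities like $\mathbb{E}\big[|\Delta_j|^2\,(1+|x_j|+|\tilde x_j|)^2\big]$, which are \emph{not} bounded by $C\,\mathbb{E}|\Delta_j|^2$ with an $N$-independent $C$. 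This is precisely where the hypothesis $\mathbb{E}|\Delta_j|^2\le N^{-\gamma}$ is spent: one uses H\"older (together with the separately-established uniform-in-$N$ higher moment bounds on both $x_j$ and $\tilde x_j$) to peel off the moment weight at the cost of a small exponent loss, e.g.\ $\mathbb{E}[|\Delta_j|^2 M^2]\le (\mathbb{E}|\Delta_j|^2)^{1-\delta}\cdot C_\delta\lesssim N^{-\gamma(1-\delta)}$. That loss is the origin of the $\epsilon$ in the statement, and the need to invoke the hypothesis inside the estimate (rather than only in a final cross-term) is what forces the half-step gain $\gamma\mapsto \tfrac12+\tfrac{\gamma}{2}-\epsilon$ instead of a direct jump to $N^{-1}$. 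Your last paragraph touches on the moment-control issue but treats it as a side obstacle rather than the reason the clean Gr\"onwall fails; making this explicit is what closes the argument.
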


Note that
\[
\frac{1}{2}+\frac{\gamma}{2}-\epsilon>\gamma\,,\quad\text{for}\quad \gamma<\frac{1}{2}-\epsilon\,,
\]
so conditioned on a lower convergence rate, the lemma can be deployed to push the convergence to a faster rate. To kick off this bootstrapping, we note that by setting $\gamma=0$, one needs the boundedness of the second moment, which can be proved. Then Lemma~\ref{lem:small} gets repeatedly deployed to finally push the results to:
\begin{equation}\label{eqn:bootstrap}
\mathbb{E}(|\Delta_i(t)|^2)\leq C_\epsilon N^{-1+\epsilon}\quad\Rightarrow\quad \mathbb{E}(W_2(\tilde{\rho}_N\,,\rho_N))\leq C_\epsilon N^{-1/2+\epsilon/2}\,,
\end{equation}
for any $\epsilon>0$.

Combining it with Lemma~\ref{lem:vj_FP} and choosing $\epsilon=1/2$ in \eqref{eqn:bootstrap}, we obtain:
\[
\begin{aligned}
&\mathbb{E}\left(W_2(\rho_N,\rho)\right)\leq \mathbb{E}\left(W_2(\rho_N,\tilde{\rho}_N)\right)+\mathbb{E}\left(W_2(\tilde{\rho}_N,\rho)\right)\\
\leq & \mathbb{E}\left(W_2(\rho_N,\tilde{\rho}_N)\right)+\left(\mathbb{E}\left(W^2_2(\tilde{\rho}_N,\rho)\right)\right)^{1/2}\leq C
\left\{
\begin{aligned}
&N^{-\frac{1}{4}},\quad d\leq4\\
&N^{-1/d},\quad d>4\\
\end{aligned}
\right.\,,
\end{aligned}
\]
concluding \eqref{eqn:decayW2}.

The weak convergence in~\eqref{weakconvergence} is a classical practice. First, noting the triangle inequality, we have
\begin{equation}\label{uf1}
\begin{aligned}
&\left(\Eb\left|\int g(x)\left[\rho_N\rd x-\rho\rd x\right] \right|^2\right)^{\frac{1}{2}}\\
\leq &\left(\Eb\left|\int g(x)\left[\rho_N\rd x-\tilde{\rho}_N\rd x\right] \right|^2\right)^{\frac{1}{2}}+\left(\Eb\left|\int g(x)\left[\tilde{\rho}_N\rd x-\rho\rd x\right] \right|^2\right)^{\frac{1}{2}}\,.
\end{aligned}
\end{equation}
These two terms can be controlled separately. To control the first term, we note
\begin{equation}\label{uf2}
\begin{aligned}
&\Eb\left|\int g(x)\left[\rho_N\rd x-\tilde{\rho}_N\rd x\right] \right|^2=\EE\left|\frac{1}{N}\sum^N_{i=1}g(x_i(t))-g(\tilde{x}_i(t))\right|^2\\
\leq&\frac{l^2}{N^2}\EE\left(\sum^N_{i=1}|x_i(t)-\tilde{x}_i(t)|^2\right)\leq C_\epsilon l^2N^{-1+\epsilon}\,,
\end{aligned}
\end{equation}
where the $l$-Lipschitz and H\"older's inequality are used in the first inequality, and~\eqref{eqn:bootstrap} is used in the second inequality. The control of the second term is essentially the law of large numbers:
\[
\EE\left|\int g(x)\left[\tilde{\rho}_N\rd x-\rho(t,x)\rd x\right]\right|^2
=\EE\left|\frac{1}{N}\sum^N_{i=1}g(\tilde{x}_i(t))-\EE_{\rho_t}(g)\right|^2=\frac{1}{N}\text{Var}_\rho[g]
\]
where $\mathrm{Var}_{\rho_t}[g]=\EE_{\rho}[g^2]-\left(\EE_{\rho}[g]\right)^2$. The term is bounded due to the $l$-Lipschitz property of $g$.
\begin{equation}\label{vf1}
\EE\left|\int g(x)\left[\widetilde{\rho}_N\rd x-\rho(t,x)\rd x\right]\right|^2\leq C\left(l\right)N^{-1}\,.
\end{equation}

Combining the two terms into \eqref{uf1}, \eqref{weakconvergence} is proved.

\subsection{Ensemble Kalman sampler}\label{sec:EKS}

This section is dedicated to the mean-field analysis of the algorithm Ensemble Kalman sampler (EKS). As for EKI, we first present the algorithm and then show the application of the procedure as demonstrated in subsection~\ref{sec:unif_analysis}.

\subsubsection{Algorithm}
EKS was proposed in~\cite{EKS}, and to some extent can be viewed as a modification to the overdamped Langevin dynamics. Unlike Langevin, which induces a single-particle MCMC type algorithm, EKS introduces the communications between particles and induces an ensemble method.

There are many approaches to justify the success of the overdamped Langevin. One approach is to view it as a particle method for the Fokker-Planck equation, and the beautiful discovery~\cite{JKO_1998} reformulates it as the gradient flow of relative entropy. More specifically, the Fokker-Planck PDE writes as:
\begin{equation}\label{eqn:FP}
\frac{\rd \rho}{\rd t}=\nabla_x\cdot\left(\nabla_x f(x;y)\rho+\nabla_x\rho\right) = -\nabla_{W_2}\KL(\rho|\rhotarg)\,.
\end{equation}
Here $\KL(\mu|\nu) = \int\log(\rd\mu/\rd\nu)\rd\mu$ is termed relative entropy, or sometimes also called the Kullback-Leibler divergence. In the equation, $\nabla_{W_2}\KL$ stands for the gradient of $\rho$ of the functional $\KL$ against the Wasserstein metric. Since it is a gradient flow type PDE, the $\KL$ monotonically decreases in time. Moreover, under proper assumptions on the function $f(x;y)$ (log-concave for example), when $t\rightarrow\infty$, $\rho(t,x)$ converges to $\rhotarg\propto\exp(-f(x;y))$~\cite{Markowich99onthe}. 

On the discrete level, to simulate the equation, one can deploy the particle method. It is then run according to the following Langevin dynamics:
\begin{equation}\label{eqn:ol}
    \rd x(t)=-\nabla_x f(x;y)\rd t+\sqrt{2}\rd W_t\,.
\end{equation}
As a natural consequence, with long enough time, the particle produced by~\eqref{eqn:ol} can be seen as a sample drawn from the target distribution $\rhotarg$. This classical sampling algorithm is called overdamped Langevin Monte Carlo~\cite{doi:10.1063/1.436415,PARISI1981378,roberts1996}.

The Ensemble Kalman Sampler (EKS) can be seen as an ensemble modification of \eqref{eqn:ol}. Indeed, it rewrites~\eqref{eqn:FP} into the following form:
\begin{equation}\label{FKPK}
\partial_t\rho=\nabla\cdot(\rho\Cov_{\rho_t}\nabla_xf(x;y))+\Tr\left(\Cov_{\rho_t}\mathcal{H}_x\rho\right)\,,
\end{equation}
where $\Cov_{\rho_t}$ is defined in \eqref{eqn:cov_rho}.

One can further formulate it as a gradient flow over a specially designed metric~\cite[Section 3.3]{EKS} to analytically quickly obtain convergence in time. We omit the details here.

The stark difference between this modified Fokker-Planck equation~\eqref{FKPK} and the classical Fokker-Planck equation~\eqref{eqn:FP} is that this new equation is nonlinear. When discretized using particle methods, one needs to differentiate System B and System C, as discussed in Section~\ref{sec:unif_analysis}. System B is a collection of particles that are passively pushed by the underlying flow, and in this context becomes:
\[
    \rd \tilde{x}_i=-\Cov_{\rho_t}\nabla_x f(\tilde{x}_i;y)\rd t+\sqrt{2\Cov_{\rho_t}}\rd W^i_t\,.
\]
In the Bayesian setting, $f(x;y)$ adopts the form in~\eqref{eqn:cost_bayes} and the formulation rewrites as:
\begin{equation}\label{eqn:ensemble_FKPK}
    \rd \tilde{x}_i=-\Cov_{\rho_t}\nabla_x \MCG(x_i)\Gamma^{-1}\left(\MCG(\tilde{x}_i(t))-y\right)\rd t-\Cov_{\rho_t}\Gamma^{-1}_0\left(\tilde{x}_i(t)-x_0\right)\rd t+\sqrt{2\Cov_{\rho_t}}\rd W^i_t\,.
\end{equation}

It is clear that the covariance in the above formulation is the ground truth covariance computed from the underlying flow $\rho$ and is not available, so in real computation, the ensemble covariance is deployed as an approximation. This leads to System C:
\begin{equation}\label{eqn:particle_FKPK}
        \rd x_i=-\mathrm{Cov}_{x_t,x_t}\nabla_x \MCG(x_i)\Gamma^{-1}\left(\MCG(x_i(t))-y\right)\rd t-\Cov_{x_t,x_t}\Gamma^{-1}_0\left(x_i(t)-x_0\right)\rd t+\sqrt{2\mathrm{Cov}_{x_t,x_t}}\rd W^i_t\,.
\end{equation}
where $\Cov_{x_t,x_t}$ is defined in~\eqref{eqn:def_ensemble_cov}.

There are two nice features of this modified Fokker-Planck equation and the ensemble discretization pair, in comparison to the original Fokker-Planck equation and the Langevin dynamics pair. As discovered in~\cite{Carrillo_2021}, the modified Fokker-Planck~\eqref{FKPK} is more stable against the conditioning of the landscape $f$. More specifically, when $\rhotarg$ is a Gaussian distribution, the convergence of $\rho(t)$ to $\rhotarg$ in the Wasserstein-2 distance is exponential, and the rate of convergence is independent of the conditioning of the target Gaussian.

The second advantage of deploying this new equation is that, when the forward map $\mathcal{G}$ is almost linear, we are in a very good position to transition the problem into a gradient-free formulation. More specifically, by formulating
\[
\Cov_{\rho_t}\nabla_x \MCG(x(t))\quad\to\quad\mathrm{Cov}_{\rho_t,\mathcal{G}_t}\,,
\]
then System B as in written in~\eqref{eqn:ensemble_FKPK} takes a gradient-free form:
\begin{equation}\label{eqn:EKS_SDE}
    \rd \tilde{x}_i=-\mathrm{Cov}_{\rho_t,\mathcal{G}_t}\Gamma^{-1}\left(\MCG(\tilde{x}_i)-y\right)-\Cov_{\rho_t}\Gamma^{-1}_0\left(\tilde{x}_i-x_0\right)\rd t+\sqrt{2\Cov_{\rho_t}}\rd W^i_t \,,
\end{equation}
and System C, as in~\eqref{eqn:particle_FKPK}, becomes:
\begin{equation}\label{Ecov_dis_revise}
    \rd x_i=-\mathrm{Cov}_{x_t,\mathcal{G}_t}\Gamma^{-1}(\mathcal{G}(x_i)-y)\rd t-\mathrm{Cov}_{x_t,x_t}\Gamma^{-1}_0(x_i(t)-x_0)\rd t+\sqrt{2\mathrm{Cov}_{x_t,x_t}}\rd W^{i}_t\,,
\end{equation}
where $\Cov_{x_t,\mathcal{G}_t}$ is the ensemble covariance between $\{x_i(t)\}$ and $\{\mathcal{G}(x_i(t))\}$ defined in~\eqref{eqn:def_ensemble_cov}. Note that the computation does not require the access of $\nabla\mathcal{G}$, a desirable feature in many scientific domains~\cite{DuIaXi:2019turbulence,GuHaScHuDuLoWu:2023posteriori}. We summarize in Algorithm \ref{ALG1} the numerical implementation of equation~\eqref{Ecov_dis_revise}.

For mathematical justification of the algorithm, we provide the mean-field analysis in the theorem below.

\begin{algorithm}[h]
\caption{\textbf{Ensemble Kalman sampling}}\label{ALG1}
\begin{algorithmic}
\State \textbf{Preparation:}

\State 1. Input: $N\gg1$; $h$ (stepsize); $M$ (stopping index); $\Gamma$; $\Gamma_0$; and $y$ (data).
\State 2. Initial: $\{x^0_i\}^N_{i=1}$ sampled from an initial distribution $\rho_\ini$.

\State \textbf{Run: } Set time step $m=0$;
\State \textbf{While} $m<M$:

1. Define empirical means and covariance:
\[
\begin{aligned}
\quad&\overline{x}^m=\frac{1}{N}\sum^N_{i=1}x^m_i\,,\ \text{and}\ \overline{\MCG}^m=\frac{1}{N}\sum^N_{i=1}\MCG(x^m_i)\,,\\
\quad&\Cov_{x^m,x^m}=\frac{1}{N}\sum^  N_{i=1}\left(x^m_i-\overline{x}^m\right)\otimes \left(x^m_i-\overline{x}^m\right) \ \text{and} \ \Cov_{x^m,\mathcal{G}^m}=\frac{1}{N}\sum^N_{i=1}\left(x^m_i-\overline{x}^m\right)\otimes \left(\MCG(x^m_i)-\overline{\MCG}^m\right)\,.
\end{aligned}
\]

2. Update ensemble particles ($\forall 1\leq i\leq N$)
\begin{equation*}\label{eqn:update_ujn}
\begin{aligned}
&x^{m+1}_{*,i}=x^m_i-h\Cov_{x^m,\mathcal{G}^m}\Gamma^{-1}\left(\MCG(x^m_i)-y\right)-h\Cov_{x^m,x^m}\Gamma^{-1}_0\left(x^{m+1}_{*,i}-x_0\right)\,,\\
&x_n^{m+1}=x^{m+1}_{*,i}+\sqrt{2h\Cov_{x^m,x^m}}\xi^m_i\,,\quad\text{with}\quad\xi^m_{i}\sim \mathcal{N}(0,\mathrm{I})\,.
\end{aligned}
\end{equation*}

3. Set $m\to m+1$.
\State \textbf{end}
    \State \textbf{Output:} Ensemble particles $\{x^M_i\}^N_{i=1}$.
\end{algorithmic}
\end{algorithm}

\begin{theorem}[Mean-field limit of EKS]\label{thm:main_EKS}
Suppose $\mathcal{G}(x)=Ax$ with $A\in\mathbb{R}^{d\times d}$. Let $\rho(t,x)$ be the weak solution to the Fokker-Planck equation and $\{x_i(t)\}$ be a solution to~\eqref{Ecov_dis_revise} with $x_i(0)$ i.i.d. drawn from the distribution $\rho_\pri$. Assume:
\begin{equation}\label{thmcondition}
\lambda_{\min}\left(\Cov_{\rho_t}\right)> \lambda_{\max}\left(\mathrm{Cov}_{\rhotarg}\right),\quad \forall 0\leq t\leq T\,,
\end{equation}
then for any $0<\epsilon<1/2$, there exits $C$, depending on $T$ and $\epsilon$, but independent of $N$ such that
\[
\begin{aligned}
\mathbb{E}\left(W_2(\rho_N(t,x),\rho(t,x))\right)\leq C
\left\{
\begin{aligned}
&N^{-\frac{1}{4}},\ d\leq 4\\
&N^{-1/d},\ d>4
\end{aligned}
\right.\,.
\end{aligned}
\]
In addition, given any $l$-Lipschitz function $g$, for any $\epsilon>0$, there is a constant $C_\epsilon$ such that for any $t<\infty$
\[
\begin{aligned}
\left(\Eb\left|\int g(x)\left[\rho_N(t,x)\rd x-\rho(t,x)\rd x\right] \right|^2\right)^{\frac{1}{2}}\leq C_\epsilon N^{-\frac{1}{2}+\epsilon}
\end{aligned}\,.
\]
where the constant $C_\epsilon$ depends on $l$, $\epsilon$ and time $t$ and is independent of $N$ 
\end{theorem}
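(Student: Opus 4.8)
The plan is to follow the unified coupling scheme of Section~\ref{sec:unif_analysis}, with Systems B and C now given by~\eqref{eqn:EKS_SDE} and~\eqref{Ecov_dis_revise}. As for EKI, I would split the argument into three parts: (i) a Fournier--Guillin comparison of the true flow $\rho$ with the empirical measure $\widetilde\rho_N$ of the ancillary particles $\{\tilde x_i\}$ solving~\eqref{eqn:EKS_SDE}; (ii) a coupling estimate comparing $\{\tilde x_i\}$ with the self-generated particles $\{x_i\}$ solving~\eqref{Ecov_dis_revise}; and (iii) the triangle inequality together with, for the weak statement, the law of large numbers, exactly as in~\eqref{uf1}--\eqref{vf1}. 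Throughout, $\{x_i(0)\}$ and $\{\tilde x_i(0)\}$ are coupled to be the same i.i.d.\ draws from $\rho_\pri$, so $\Delta_i(0):=x_i(0)-\tilde x_i(0)=0$.

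For part (i): since $\mathcal{G}(x)=Ax$, the flow~\eqref{FKPK} started from the Gaussian $\rho_\pri$ keeps $\rho_t$ Gaussian, with mean solving a linear ODE and covariance $\Cov_{\rho_t}$ solving a closed Riccati equation whose fixed point is $\Cov_{\rhotarg}$. Combining this with assumption~\eqref{thmcondition}, the continuity of $t\mapsto\Cov_{\rho_t}$ on the compact interval $[0,T]$, and an energy estimate for the moments, I get two-sided bounds $c\,I\preceq\Cov_{\rho_t}\preceq C\,I$ and uniform-in-time moment bounds on $\rho_t$; hence the $x$-dependence of the coefficients in~\eqref{eqn:EKS_SDE} is globally Lipschitz on $[0,T]$ and $\sqrt{2\Cov_{\rho_t}}$ is well defined and Lipschitz in $t$. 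Then \cite[Theorem~1]{Fournier2015}, as in Lemma~\ref{lem:vj_FP}, delivers
\[
\mathbb{E}\big(W_2(\widetilde\rho_N(t,x),\rho(t,x))\big)\le C(T)\,N^{-\min\{1/2,2/d\}},\qquad 0\le t\le T.
\]

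For part (ii): I would subtract~\eqref{Ecov_dis_revise} from~\eqref{eqn:EKS_SDE} and apply It\^o's formula to $\tfrac1N\sum_i|\Delta_i(t)|^2$. As in~\eqref{eqn:velocity_control}, the drift terms split into genuinely Lipschitz pieces bounded by $\tfrac1N\sum_i|\Delta_i|^2$ (from the $A(x_i-\tilde x_i)$ and $(x_i-\tilde x_i)$ contributions, using the uniform ensemble-covariance bounds) and mismatch pieces controlled by $\|\Cov_{x_t,x_t}-\Cov_{\rho_t}\|$ and $\|\Cov_{x_t,\mathcal{G}_t}-\Cov_{\rho_t,\mathcal{G}_t}\|$; each mismatch is in turn bounded by $\tfrac1N\sum_i|\Delta_i|^2$ plus a sampling term $\|\mathrm{Cov}_{\{\tilde x_i\}}-\Cov_{\rho_t}\|$ that is $O(N^{-1/2+\epsilon})$ in $L^2$ by part (i) and a law-of-large-numbers bound using the uniformly bounded moments. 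The diffusion term produces $\Tr\big[(\sqrt{2\Cov_{x_t,x_t}}-\sqrt{2\Cov_{\rho_t}})(\sqrt{2\Cov_{x_t,x_t}}-\sqrt{2\Cov_{\rho_t}})^\top\big]$, which I would handle with the Ando--Hemmen inequality $\|P^{1/2}-Q^{1/2}\|\le(\lambda_{\min}(P)^{1/2}+\lambda_{\min}(Q)^{1/2})^{-1}\|P-Q\|$ for positive definite $P,Q$, reducing it again to $\|\Cov_{x_t,x_t}-\Cov_{\rho_t}\|^2$ times a constant depending on lower eigenvalue bounds. Collecting everything yields $\tfrac{d}{dt}\big(\tfrac1N\sum_i\EE|\Delta_i|^2\big)\le C\,\tfrac1N\sum_i\EE|\Delta_i|^2+C\,N^{-1+2\epsilon}$, and Gr\"onwall with $\Delta_i(0)=0$ gives $\tfrac1N\sum_i\EE|\Delta_i(t)|^2\le C_\epsilon N^{-1+\epsilon}$, hence $\EE(W_2(\widetilde\rho_N,\rho_N))\le C_\epsilon N^{-1/2+\epsilon}$ as in~\eqref{eqn:bootstrap}. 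Combining with part (i) through the triangle inequality proves the Wasserstein estimate, and the weak estimate then follows verbatim from~\eqref{uf1}--\eqref{vf1}.

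The hard part will be securing a lower eigenvalue bound on the ensemble covariance $\Cov_{x_t,x_t}$, uniform in $t\in[0,T]$ and in $N$, which is precisely what Ando--Hemmen requires along the trajectory; for $\rho_t$ this is assumption~\eqref{thmcondition}, but for the empirical covariance it must be propagated. I would use a stopping-time/bootstrap device: with $\delta_0:=\inf_{[0,T]}\lambda_{\min}(\Cov_{\rho_t})>\lambda_{\max}(\Cov_{\rhotarg})$, set $\tau_N:=\inf\{t:\lambda_{\min}(\Cov_{x_t,x_t})\le\tfrac12(\delta_0+\lambda_{\max}(\Cov_{\rhotarg}))\}$, run the coupling estimate only up to $t\wedge\tau_N$ where the lower bound holds by construction, deduce $\EE\|\Cov_{x_{t\wedge\tau_N},x_{t\wedge\tau_N}}-\Cov_{\rho_{t\wedge\tau_N}}\|^2\lesssim N^{-1+\epsilon}$, and note that on $\{\tau_N\le T\}$ Weyl's inequality forces $\|\Cov_{x_{\tau_N},x_{\tau_N}}-\Cov_{\rho_{\tau_N}}\|\ge\tfrac12(\delta_0-\lambda_{\max}(\Cov_{\rhotarg}))>0$; hence $\mathbb{P}(\tau_N\le T)\lesssim N^{-1+\epsilon}\to0$, and the small-probability complement is absorbed by Cauchy--Schwarz against a priori second and fourth moment bounds on both systems, which come from the dissipativity of the $\Gamma_0^{-1}$ prior-penalization term. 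The remaining points — the Riccati/Gaussian-preservation analysis in part (i) and checking that every constant stays independent of $N$ — are routine.
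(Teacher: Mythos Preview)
Your proposal is correct and follows essentially the same coupling strategy the paper indicates: Fournier--Guillin for System~A vs.\ System~B, the Ando--Hemmen inequality to handle the square-root diffusion mismatch (which the paper explicitly names as the EKS-specific device and the reason for assumption~\eqref{thmcondition}), and the triangle-inequality/LLN argument of~\eqref{uf1}--\eqref{vf1} for the weak statement. The paper does not actually give the proof here but refers to~\cite{Zhi_Qin_2021}, so your stopping-time localization to secure the empirical-covariance lower eigenvalue bound is a reasonable way to fill in a step the paper leaves implicit.
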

The proof of the above theorem is similar to that of Theorems \ref{thm:mean_fieldEKI}, so we will not include it here. We refer interested readers to~\cite{Zhi_Qin_2021}.

We note that the assumption~\eqref{thmcondition} is for a technical purpose that we hope to be able to remove. In the proof we deploy the Ando-Hemmen inequality that studies the differences between two matrices when square roots are taken. As long as some other reasonable estimates, which avoid direct use of Ando-Hemmen, can be found to control the sensitivity of taking square roots of matrices, we see the possibility of removing this assumption. In addition, when $\mathcal{G}$ is weakly nonlinear, we anticipate that convergence to the mean-field limit is still valid.
\section{Boltzmann simulator}\label{sec:Boltzmann}
We switch gears to a new set of sampling strategy called the Boltzmann simulator. In contrast to global interactions, the Boltzmann simulator deploys local interactions, drawing inspiration from the Vlasov-Boltzmann equation. 

To facilitate our discussion, we begin by providing an overview of the Vlasov-Boltzmann equation in Section~\ref{sec:boltzmann_idea}. This gives us the motivation to design sampling algorithms. Following this, in Section~\ref{sec:boltzmann_particle}, we propose two sampling methods derived from the Vlasov-Boltzmann equation, namely the Nanbu simulator and the Bird simulator. The mean-field justification of these two methods will then be discussed in Section~\ref{sec:boltzmann_mean_field}. Numerical results are presented in Section~\ref{sec:boltzmann_numerics}.

\subsection{Vlasov-Boltzmann and its properties}\label{sec:boltzmann_idea}

The Boltzmann simulator should be viewed as a particle method for the Vlasov-Boltzmann equation. In the domain $(x,v)\in\Rb^{2d}$:
\begin{equation}\label{eqn:boltzmann}
    \partial_t \mu_t(x,v) + \Lc \mu_t(x,v)
    = \Qc \mu_t(x,v)\,,
\end{equation}
where the transport operator $\Lc$ is defined by
\begin{equation}
\Lc \mu_t(x,v) = v\cdot\nabla_x \mu_t(x,v) - \nabla_x f(x) \cdot \nabla_v \mu_t(x,v) \,,
\end{equation}
and the collision operator $\Qc$ is defined by
\begin{equation}
\Qc \mu_t (x,v) = \iint_{\Sb^{d-1}\times\Rb^{d}} q(v,w,n) (\mu_t(x,v^\ast)\mu_t(x,w^\ast) - \mu_t(x,v)\mu_t(x,w)) \, \rmd n \rmd w \,.
\end{equation}
The transport operator $\Lc$ is the Vlasov component of the equation driven by the Hamiltonian:
\begin{equation}\label{eqn:hamiltonian}
\dot{x} = v\,,\quad\dot{v} = -\nabla_xf
\end{equation}
where $f(x)$ is a given potential function. 
The Boltzmann collision operator $\mathcal{Q}$ describes the interaction between two particles located at the same position $x$ and having pre-collisional velocities $v$ and $w$. During this interaction, they exchange momentum and energy, resulting in new velocities $v^\ast$ and $w^\ast$ after the collision.
The vector $n$ is a unit vector that describes in which direction the particles are scattered off to:
\begin{equation}\label{eqn:new_velocities}
    v^\ast = v + ((w-v)\cdot n) n\,, \quad w^\ast = w + ((v-w)\cdot n) n \,.
\end{equation}
It can be verified that the relative velocity before and after collision is unchanged:
\begin{equation}\label{eqn:velocity_diff}
|v-w| = |v^\ast-w^\ast| \,.
\end{equation}
The term $q(v,w,n)$ is called the cross-section. 
It quantifies the likelihood of such interactions. We have assumed that the medium is homogeneous, meaning $q$ is independent of $x$, and only depends on $|v-w|$ and $(v-w)\cdot n$. 
This assumption is useful when we derive the weak form PDE in Section~\ref{sec:particle_weak_form}. We usually say that the Boltzmann operator is local because all particle interactions take place when their spatial coordinates agree -- thus local in $x$.

This classical local Boltzmann collision operator in equation~\eqref{eqn:boltzmann} can sometimes be relaxed to allow particles at different spatial locations to interact as well. We consider the mollified Vlasov-Boltzmann equation~\cite{GrMe:1997stochastic,Me:1996asymptotic}
\begin{equation}\label{eqn:mollified_boltzmann}
    \partial_t \mu_t(x,v) + 
    \Lc \mu_t (x,v)
    = \widetilde{\Qc} \mu_t (x,v)\,,
\end{equation}
where the collision operator
\begin{equation}
\widetilde{\Qc} \mu_t (x,v) = \iint_{\Sb^{d-1}\times\Rb^{2d}} ~\widetilde{q}(x,v,y,w,n) (\mu_t(x,v^\ast)\mu_t(y,w^\ast) - \mu_t(x,v)\mu_t(y,w)) \, \rmd n \rmd w \rmd y \,.
\end{equation}
The new cross-section $\widetilde{q}(x,v,y,w,n) = I(|x-y|)q(v,w,n)$ is delocalized, with $I(|x-y|)$ serving as a mollifier. It is nonnegative and normalized, i.e. $\int_{\Rb^d} I(|x|) \rmd x = 1$. This equation allows a particle at location $x$ with velocity $v$ to interact with a particle at location $y$ with velocity $w$ to obtain new velocities $v^\ast$ and $w^\ast$. The intensity of such interaction is upper bounded by $I$. Note that the classical Boltzmann~\eqref{eqn:boltzmann} can be recovered if $I=\delta$.

The equation can also be interpreted with a probabilistic viewpoint, dating back to Tanaka~\cite{Ta:1978probabilistic,Ta:2002stochastic}, where he connected the solution of the Boltzmann equation to a Piecewise Deterministic Markov Process (PDMP). 
In this context, we view the solution of the Boltzmann equation as the distribution of particles that evolve according to the force field $-\nabla f(x)$. Each particle is equipped with a Poisson clock, and when the clock rings, the particle, with coordinate $(x,v)$, chooses another particle $(y,w)$ to interact with. The particle being chosen is selected at random from an ocean of infinitely many independent particles distributed according to the same distribution. 
The particle $(x,v)$ collides with the particle $(y,w)$ by randomly sampling a normal direction $n\sim\widetilde{q}(x,v,y,w,n)\rmd n$. 
After the collision, the particle $(x,v)$ adopts a new coordinate $(x,v^\ast)$ and forgets all knowledge about the particle $(y,w)$. It then continues to evolve under the force field. For mathematical convenience, it is further assumed the cross-section is bounded:
\begin{assumption}[Boundedness of Cross-Section]\label{assumption:kernel_bound}
There exists $\Lambda>0$ such that
\begin{equation}
\sup_{x,y,v,w\in\Rb^d} \int_{\Sb^{d-1}} \widetilde{q}(x,v,y,w,n)\rmd n \leq \Lambda <\infty \,.
\end{equation}
\end{assumption}

According to the PDE analysis perspective, there are two nice features of this modified Boltzmann equation.
\begin{itemize}
    \item[--] Equilibrium state is the target distribution. As will be shown in Theorem~\ref{thm:equilibrium}, the equilibrium of the equation is the following:
    \begin{equation}\label{eqn:equilibrium}
\mu^\ast \propto \exp\left(-f(x) - \frac{1}{2}|v|^2\right) \,,
\end{equation}
meaning its $x$-marginal distribution truly recovers the target distribution~\eqref{def:target}.
    \item[--] Long time limit convergences to the equilibrium. With proper conditions, as discussed in Theorem~\ref{thm:boltzmann_equilibrium}, one can also show that, in a weighted $L_\infty$-norm, the PDE drives the initial distribution to the equilibrium state~\eqref{eqn:equilibrium}.
\end{itemize}
The goal of sampling is to design a dynamics whose invariant measure is the target distribution, so that samples following this dynamics, in long time, can be viewed as a drawing from the target distribution. The two features above suggest that the dynamics provided by the Vlasov-Boltzmann system~\eqref{eqn:mollified_boltzmann} truly drives some arbitrary initial data to the target distribution and thus is a feasible choice. If one can design a particle method to simulate~\eqref{eqn:mollified_boltzmann}, the particles, in the long time limit, can be viewed as samples drawn from the target distribution.

We should also note that under the boundedness assumption of the cross-section, the well-posedness of the mollified Vlasov-Boltzmann equation is known~\cite{Gr:1992nonlinear}, so the existence and uniqueness of the solution is not a concern.

We now present the mathematically rigorous description of the two features discussed above. Firstly, the target distribution is an invariant measure of the Vlasov-Boltzmann equation.
\begin{theorem}\label{thm:equilibrium}
The mollified Vlasov-Boltzmann equation~\eqref{eqn:mollified_boltzmann} admits an equilibrium state $\mu^\ast$ defined in~\eqref{eqn:equilibrium}.
\end{theorem}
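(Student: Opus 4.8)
The plan is to verify directly that $\mu^\ast(x,v)\propto\exp\!\left(-f(x)-\tfrac12|v|^2\right)$ is a stationary solution of~\eqref{eqn:mollified_boltzmann}, i.e. that $\Lc\mu^\ast=0$ and $\widetilde{\Qc}\mu^\ast=0$ separately. These are the two natural pieces, and each vanishes for a different structural reason: the transport part because $\mu^\ast$ is a function of the conserved Hamiltonian, and the collision part because $\mu^\ast$ is a local Maxwellian in $v$, so the gain and loss integrands cancel pointwise.

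First I would handle the transport operator. Since $\mu^\ast(x,v)=Z^{-1}e^{-H(x,v)}$ with $H(x,v)=f(x)+\tfrac12|v|^2$, we compute $\nabla_x\mu^\ast=-(\nabla_xf)\mu^\ast$ and $\nabla_v\mu^\ast=-v\,\mu^\ast$, hence
\begin{equation*}
\Lc\mu^\ast = v\cdot\nabla_x\mu^\ast - \nabla_xf\cdot\nabla_v\mu^\ast
= -\,(v\cdot\nabla_xf)\,\mu^\ast + (\nabla_xf\cdot v)\,\mu^\ast = 0\,.
\end{equation*}
This is just the statement that $H$ is invariant along the Hamiltonian flow~\eqref{eqn:hamiltonian}.

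Next I would show $\widetilde{\Qc}\mu^\ast\equiv0$ by proving the integrand itself vanishes. The key identity is that the collision rule~\eqref{eqn:new_velocities} conserves kinetic energy at a common interaction: from~\eqref{eqn:new_velocities} one checks $|v^\ast|^2+|w^\ast|^2=|v|^2+|w|^2$ (expand and use that the cross term cancels, equivalently combine it with~\eqref{eqn:velocity_diff} and conservation of $v+w$ along $n^\perp$ directions). Consequently
\begin{equation*}
\mu^\ast(x,v^\ast)\mu^\ast(y,w^\ast)
= Z^{-2}\exp\!\left(-f(x)-f(y)-\tfrac12|v^\ast|^2-\tfrac12|w^\ast|^2\right)
= \mu^\ast(x,v)\mu^\ast(y,w)\,,
\end{equation*}
since $f(x)$ and $f(y)$ are untouched by the velocity collision (the collision is local in both spatial slots, changing only $v,w$). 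Therefore the bracket $\mu^\ast(x,v^\ast)\mu^\ast(y,w^\ast)-\mu^\ast(x,v)\mu^\ast(y,w)$ is identically zero, and so $\widetilde{\Qc}\mu^\ast=0$ regardless of the mollifier $I(|x-y|)$ and the cross-section $q(v,w,n)$. Adding the two computations gives $\partial_t\mu^\ast = -\Lc\mu^\ast+\widetilde{\Qc}\mu^\ast = 0$, so $\mu^\ast$ is an equilibrium; Assumption~\ref{assumption:kernel_bound} ensures the collision integral is well-defined so that the manipulation is legitimate.

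The only genuine obstacle is the algebraic verification that the collision~\eqref{eqn:new_velocities} conserves $|v|^2+|w|^2$; everything else is a one-line differentiation or an exponential-of-a-sum bookkeeping step. That conservation is standard for elastic collisions and follows by a short expansion, using that $n$ is a unit vector so $((w-v)\cdot n)^2 n\cdot n = ((w-v)\cdot n)^2$; I would record it as a small lemma or inline computation. I should also note for completeness that one could equivalently phrase the $\widetilde{\Qc}$ part as: $\mu^\ast$ makes the ``detailed balance'' condition of the collision kernel hold, because the pre- and post-collisional phase-space factors agree — but the direct cancellation above is the cleanest route and suffices for the stated existence claim.
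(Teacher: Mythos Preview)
Your proposal is correct and follows essentially the same approach as the paper: both verify directly that $\Lc\mu^\ast=0$ via the Hamiltonian structure and that $\widetilde{\Qc}\mu^\ast=0$ via the kinetic-energy conservation $|v^\ast|^2+|w^\ast|^2=|v|^2+|w|^2$, which makes the gain--loss integrand vanish pointwise. Your write-up is slightly more detailed (noting the detailed-balance interpretation and invoking Assumption~\ref{assumption:kernel_bound} for well-definedness of the integral), but the argument is the same.
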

\begin{proof}
To show the theorem, we are to plug in $\mu^\ast$ into the equation and demonstrate that both the left-hand side and the right-hand side vanishes.

The left hand side transport term vanishes can be shown by direct computation:
\begin{equation}\label{eqn:eq_transport}
v\cdot\nabla_x \mu^\ast(x,v) - \nabla_x f(x) \cdot \nabla_v \mu^\ast(x,v)
= (-v\cdot\nabla_x f(x)) \mu^\ast(x,v) + (\nabla_x f(x) \cdot v)\mu^\ast(x,v) = 0\,.
\end{equation}

The right-hand side collision term also vanishes. From the conservation of energy upon collision:
\begin{equation}\label{eqn:energy_conservation}
|v^\ast|^2 + |w^\ast|^2 = |v|^2 + |w|^2 \,,
\end{equation}
then
\begin{equation}
\begin{aligned}
&\iint_{\Sb^{d-1}\times\Rb^{2d}} \widetilde{q}(x,v,y,w,n) (\mu^\ast(x,v^\ast)\mu^\ast(y,w^\ast) - \mu^\ast(x,v)\mu^\ast(y,w)) \, \rmd n \rmd w \rmd y\\
&\propto \iint_{\Sb^{d-1}\times\Rb^{2d}} \widetilde{q}(x,v,y,w,n) \exp(-f(x)-f(y)) \\
& \qquad \qquad \qquad \qquad \qquad \qquad \left[ \exp(-|v^\ast|^2 - |w^\ast|^2) - \exp(-|v|^2 - |w|^2) \right] \, \rmd n \rmd w \rmd y = 0\,.
\end{aligned}
\end{equation}
\end{proof}

The stability of the equilibrium state $\mu^\ast$ to the Vlasov-Boltzmann equation~\eqref{eqn:boltzmann} has also been studied, see~\cite{Ki:2014boltzmann,Li:2008trend}. In particular, it is proved in~\cite{Ki:2014boltzmann} that for an potential function $f$ with degenerate coordinate dependence, an initial state converges to $\mu^\ast$ exponentially in a weighted-$L_\infty$ norm:
\begin{theorem}[Theorem 1 in~\cite{Ki:2014boltzmann}]\label{thm:boltzmann_equilibrium}
Assume the potential function $f(x)$ has degenerate coordinate dependence, that is, there exists an $0<n<d$, so that $f(x) = f(x_{n+1},\dots,x_d)$ for any $x=(x_1,\dots,x_d)$. Assume the initial condition $\mu(0,x,v)=\mu_0(x,v)$ satisfies
\begin{equation}\label{eqn:boltzmann_convergence_conditions}
\begin{aligned}
&\int_{\Rb^{2d}} \mu_0(x,v)- \mu^\ast(x,v) \rmd x \rmd v = 0 \,, \quad \text{(Mass condition)} \\
&\int_{\Rb^{2d}} \left( \frac{1}{2}|v|^2 + f(x) \right) (\mu_0(x,v)- \mu^\ast(x,v)) \rmd x \rmd v = 0 \,, \quad \text{(Energy condition)} \\
&\int_{\Rb^{2d}} v_i (\mu_0(x,v) - \mu^\ast(x,v)) \rmd x \rmd v = 0 \,, \quad i = 1,\dots,n,\quad\text{(Momentum condition)} \,,
\end{aligned}
\end{equation}
and is close enough to the equilibrium state $\mu^\ast(x,v)$. 
Then under mild regularity conditions and periodic boundary conditions, the Vlasov-Boltzmann equation has a unique global solution $\mu_t(x,v)$. 
Furthermore, there exists a constant $\lambda = \lambda(q,\|f\|_\infty)>0$ such that
\begin{equation}
  \|\Phi \cdot(\mu_t-\mu^\ast)\|_\infty \leq \Oc(e^{-\lambda t}) \,, \quad \forall t \geq 0\,,
\end{equation}
with the weight function $\Phi(x,v) = \left(\frac{1}{2}|v|^2 + f(x,v)\right)^{\beta/2}$ for $\beta>d/2$.
\end{theorem}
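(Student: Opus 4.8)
The statement is quoted from \cite{Ki:2014boltzmann}; we only sketch the strategy, which combines a nonlinear energy method with an $L^2$--$L^\infty$ bootstrap in the spirit of Guo. First I would linearize: write $\mu_t = \mu^\ast + \sqrt{\mu^\ast}\,h_t$ and plug into \eqref{eqn:mollified_boltzmann}. Since $\Lc$ annihilates $\mu^\ast$ (see \eqref{eqn:eq_transport}) and $\widetilde{\Qc}$ vanishes at $\mu^\ast$ (Theorem~\ref{thm:equilibrium}), $h_t$ obeys $\partial_t h + \Lc h + \nu(v)h - Kh = \Gamma(h,h)$, where $\nu(v)h - Kh$ is the symmetric linearized collision operator ($\nu$ the collision frequency, $K$ a compact velocity-averaging gain term) and $\Gamma$ the quadratic remainder. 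The kernel of the linearized collision operator is spanned by $\sqrt{\mu^\ast}$, $v_i\sqrt{\mu^\ast}$ and $|v|^2\sqrt{\mu^\ast}$; intersecting it with $\ker\Lc$ and exploiting the degenerate dependence $f=f(x_{n+1},\dots,x_d)$ together with periodicity shows that the conserved quantities are exactly mass, energy and the momenta $v_1,\dots,v_n$ along the free directions. The three identities in \eqref{eqn:boltzmann_convergence_conditions} say precisely that $h_0$ is orthogonal to this finite-dimensional null space, and this orthogonality is propagated in time.

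Next I would establish exponential decay in $L^2_{x,v}$ on the orthogonal complement of that null space. The collision part is dissipative only in velocity, $\langle(\nu-K)h,h\rangle \gtrsim \|(\mathrm{I}-\mathrm{P})h\|_\nu^2$ with $\mathrm{P}$ the projection onto local collision invariants, and the transport operator $\Lc$ --- through the Hamiltonian flow $\dot x=v$, $\dot v=-\nabla f$ --- is used to recover dissipation of the macroscopic part $\mathrm{P}h$. Because the spatial domain is a torus, a Poincar\'e-type inequality is available even in the free coordinates, and the momentum obstruction in those directions has already been removed in the first step. Building a modified energy functional (the $L^2$ norm plus small mixed $x$--$v$ corrections) then yields $\frac{d}{dt}\mathcal{E}(h_t)\le -\lambda_0\mathcal{E}(h_t)$ for the linear flow.

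To upgrade to the weighted $L^\infty$ conclusion I would set $g_t = \Phi\,h_t$ with $\Phi=(\tfrac12|v|^2+f(x))^{\beta/2}$, write the Duhamel formula for $g_t$ along the characteristics of \eqref{eqn:hamiltonian}, and iterate it twice: the loss term $\nu(v)$ produces an exponentially small factor, the gain term $K$ improves integrability, and inserting the $L^2$ decay from the previous step on the intermediate time slice gives $\|\Phi h_t\|_\infty \lesssim e^{-\lambda t}(\|\Phi h_0\|_\infty + \|h_0\|_{L^2})$ for the linear semigroup with $\lambda<\lambda_0$. Finally, bounding the quadratic term by $\|\Phi h\|_\infty^2$ (using Assumption~\ref{assumption:kernel_bound} on the cross-section), a continuity argument closes the nonlinear estimate: if $\|\Phi h_0\|_\infty$ is small enough the solution is global and $\|\Phi(x,v)(\mu_t-\mu^\ast)\|_\infty = \Oc(e^{-\lambda t})$.

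The main obstacle is the $L^2$ hypocoercivity step in the presence of a \emph{degenerate} potential: the free directions $x_1,\dots,x_n$ receive no confinement from $f$, so the regularizing mechanism there rests entirely on the transport operator and the torus geometry, while one must simultaneously quotient out the surviving momentum invariants --- choosing the modified energy functional and its coefficients so that all cross terms are absorbed is the delicate point. The weighted $L^\infty$ iteration is technically heavy but follows a standard template once the $L^2$ decay is in hand.
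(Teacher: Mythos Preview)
The paper does not supply its own proof of this theorem: it is stated purely as a citation of Theorem~1 in~\cite{Ki:2014boltzmann}, and the surrounding text only records the result and then moves on (indeed, the remark immediately after the statement emphasizes that the convergence is only known for the classical Vlasov--Boltzmann equation~\eqref{eqn:boltzmann}, not the mollified version). You correctly flag this at the outset, and your sketch of the Guo-type nonlinear energy method combined with an $L^2$--$L^\infty$ bootstrap is a faithful outline of the machinery that underlies~\cite{Ki:2014boltzmann}. In that sense there is nothing to compare: the paper simply defers to the reference, and your proposal goes strictly further by summarizing the argument.

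One small slip worth fixing: you write that you plug the perturbation ansatz into~\eqref{eqn:mollified_boltzmann}, but Theorem~\ref{thm:boltzmann_equilibrium} concerns the unmollified Vlasov--Boltzmann equation~\eqref{eqn:boltzmann} (the paper explicitly notes that no analogous convergence-in-time result is yet available for the mollified equation). Relatedly, the linearized collision operator in~\cite{Ki:2014boltzmann} comes from the local $\Qc$, not from $\widetilde{\Qc}$, so Assumption~\ref{assumption:kernel_bound} on the mollified cross-section is not the relevant structural hypothesis here; the cutoff assumptions in Kim's paper are imposed directly on $q$. These are cosmetic corrections and do not affect the logic of your outline.
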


\begin{remark}
Before we dive in to use it as a sampling strategy, a couple of comments are in place.
\begin{itemize}
\item The convergence theorem has no requirement on the convexity of the potential, and hence it holds promise for applications in non-logconcave sampling problems. Indeed, in our numerical section, we will demonstrate one example where we have a double-well potential.
\item There is still one step before turning it directly into a sampling method. To be more precise, Theorem~\ref{thm:boltzmann_equilibrium} states the exponential convergence to the classical Vlasov-Boltzmann~\eqref{eqn:boltzmann}. However, this equation cannot be simulated using the particle method. The particle methods can only simulate the modified Boltzmann equation~\eqref{eqn:mollified_boltzmann}. We do not yet have a convergence-in-time result that shows the solution converging to the equilibrium state for the modified Boltzmann equation.
\item The mathematical reason underpinning the Boltzmann simulator is the fact that the Bhatnagar-Gross-Krook (BGK) operator is typically deployed as an approximation to the collision operator. The linearized Vlasov-BGK equation writes:
\begin{equation*}
    \partial_t \mu_t(x,v) + v\cdot\nabla_x \mu_t(x,v) - \nabla_x f(x) \cdot \nabla_v \mu_t(x,v) = \rho_t M - \mu_t
\end{equation*}
where $M\propto e^{-|v|^2/2}$ is the normalized Maxwellian, and $\rho_t(x) = \int\mu_t(x,v)\rd v$ is the density. The particle simulation of this equation is to have the particle running according to the Hamiltonian flow~\eqref{eqn:hamiltonian}, and scatters off with a new velocity drawn according to Gaussian $M$ upon a Poison clock. This formulation corresponds exactly to the randomized Hamiltonian Monte Carlo algorithm~\cite{BoSa:2017randomized,LuWa:2022explicit}. 
    \end{itemize}
\end{remark}
These nice features suggest that the particle simulator of the Vlasov-Boltzmann equation should give a good sampling method.

\subsection{Nanbu and Bird algorithm}\label{sec:boltzmann_particle}

Both the Nanbu sampler and the Bird sampler are termed Direct Simulation Monte Carlo (DSMC) methods to simulate the mollified Vlasov-Boltzmann equation~\eqref{eqn:mollified_boltzmann}, see~\cite{PaRu:2001introduction}.

According to the probabilistic interpretation mentioned in Section~\ref{sec:boltzmann_idea}, the Poisson clock determines the time for the particle at hand to ``collide'' with peers. However, this interpretation does not say what happens to the collision partner. Depending on what happens to this collision partner, different algorithms have been developed by Nanbu~\cite{Na:1980direct} and Bird~\cite{Bi:1970direct}.

We first introduce the Bird Algorithm. To start, we generated several samples $\{(x_i(0),v_i(0))\}_{i=1}^N$ at the initial time. For each ordered pair $(i,j)$, we attach an independent Poisson process with rate $\Lambda/N$. 
Between two jump times, the particles evolve independently according to the free transport~\eqref{eqn:hamiltonian}.
When the clock of the ordered pair $(i,j)$ rings at time $T_{ij}$, the state of the pair $(i,j)$ is re-evaluated. There are two possibilities:
\begin{itemize}
\item[--] With probability $\Delta_{ij} = \Lambda^{-1}\int_{\Sb^{d-1}}\widetilde{q}(x_i(T_{ij}),v_i(T_{ij}),x_j(T_{ij}),v_j(T_{ij}),n)\rmd n$, the pair is updated. They adopt new velocities according to~\eqref{eqn:new_velocities} with $n$ drawn from:
\[
n\sim \Lambda^{-1}\widetilde{q}(x_i(T_{ij}),v_i(T_{ij}),x_j(T_{ij}),v_j(T_{ij}),n)\rmd n\,.
\]
\item[--] With probability $1-\Delta_{ij}$, the state of the $(i,j)$ pair is unchanged.
\end{itemize}

We summarize the Bird method in Algorithm~\ref{alg:bird}.

\begin{algorithm}[h]
\caption{\textbf{Bird sampling method}}\label{alg:bird}
\begin{algorithmic}
\State \textbf{Preparation:}

\State 1. Input: $N\gg1$; $T$ (terminal time);$\Lambda$ (upper bound for total cross-section); $f(x)$.
\State 2. Initial: $\{(x_i(0),v_i(0))\}_{i=1}^N$ sampled from an initial distribution $\mu_\ini$.

\State \textbf{Run: } Set system time $t=0$; Sample initial collision time $t_{ij}\sim \mathrm{Exp}\left(\Lambda/N\right)$, $i,j=1,\dots,N$.
\State \textbf{While} $t<T$:

1. Pick $\tau_0 = \underset{i,j=1,\dots,N}{\min} t_{ij}$ and $(i_\rmc,j_\rmc) = \underset{i,j=1,\dots,N}{\argmin} \, t_{ij}$; set $\tau_1 = \tau_0\wedge T$.

2. Evolve over $[t, \tau_1]$ Hamilton's equations 
\[
\dot{x}_i = v_i \,, \quad \dot{v}_i = -\nabla_x f(x_i) \,, \quad i=1,\dots,N
\]
with initial condition $(x_i(t),v_i(t))$.

3. If $\tau_0\leq T$, implement the collision stage; else jump to Step 4. 
Sample a number $a$ uniformly over $[0,1]$. If
\[
a \leq \frac{1}{\Lambda} \int_{\Sb^{d-1}} \widetilde{q}(x_i(\tau_1),v_i(\tau_1),x_j(\tau_1),v_j(\tau_1),n) \rmd n \,,
\]
then perform collision: sample the normal direction $n \in \Sb^{d-1}$ by
\[
n \sim \widetilde{q}(x_{i_\rmc}(\tau_1),v_{i_\rmc}(\tau_1),x_j(\tau_1),v_j(\tau_1),n) \rmd n \,,
\]
then update the velocity of particle $i_\rmc$ and $j_\rmc$ according to the rule
\begin{align*}
&v_{i_\rmc} \leftarrow v_{i_\rmc} + ((v_{j_\rmc} - v_{i_\rmc})\cdot n)n \\
&v_{j_\rmc} \leftarrow v_{j_\rmc} + ((v_{i_\rmc} - v_{j_\rmc})\cdot n)n \,;
\end{align*}
else jump to Step 4.

4. Update system time $t\leftarrow\tau_1$ and update the next collision time of the pair $(i_\rmc,j_\rmc)$ by $t_{i_\rmc j_\rmc}\leftarrow t_{i_\rmc j_\rmc} + \delta t$ with $\delta t \sim \mathrm{Exp}\left(\Lambda/N\right)$.

\State \textbf{end}
\State \textbf{Output:} Ensemble particles $\{(x_i(T),v_i(T))\}_{i=1}^N$.
\end{algorithmic}
\end{algorithm}

The distinctive feature of the Nanbu method, compared to the Bird method, is that its Poisson clock is attached to a single particle, and it more views each particle to interact with a ``mean-field'' ocean. Each particle has an independent Poisson clock and evolves according to the Hamiltonian dynamics~\eqref{eqn:hamiltonian} until the clock rings. When one clock rings, the associated particle finds, uniformly among its peers, another particle with which to collide with according to the rate described in $\widetilde{q}$. 
However, only the specific particle whose clock rings undergoes an update.

We summarize the Nanbu method in Algorithm~\ref{alg:nanbu}.

\begin{algorithm}[h]
\caption{\textbf{Nanbu sampling method}}\label{alg:nanbu}
\begin{algorithmic}
\State \textbf{Preparation:}

\State 1. Input: $N\gg1$; $T$ (terminal time);$\Lambda$ (upper bound for total cross-section); $f(x)$.
\State 2. Initial: $\{(x_i(0),v_i(0))\}_{i=1}^N$ sampled from an initial distribution $\mu_\ini$.

\State \textbf{Run: } Set system time $t=0$; Sample initial collision time $t_{i}\sim \mathrm{Exp}(\Lambda)$, $i=1,\dots,N$.
\State \textbf{While} $t<T$:

1. Pick $\tau_0 = \underset{i=1,\dots,N}{\min} t_{i}$ and $i_\rmc = \underset{i=1,\dots,N}{\argmin}\, t_{i}$; set $\tau_1 = \tau_0\wedge T$.

2. Evolve over $[t, \tau_1]$ Hamilton's equations 
\begin{equation*}\label{eqn:boltzmann_ode}
\dot{x}_i = v_i \,, \quad \dot{v}_i = -\nabla_x f(x_i) \,, \quad i=1,\dots,N
\end{equation*}
with initial condition $(x_i(t),v_i(t))$.

3. If $\tau_0\leq T$, implement the collision stage; else jump to Step 4. 
Sample a number $a$ uniformly over $[0,1]$. If
\[
a \leq \frac{1}{N\Lambda} \sum_{j=1}^N \int_{\Sb^{d-1}} \widetilde{q}(x_i(\tau_1),v_i(\tau_1),x_j(\tau_1),v_j(\tau_1),n) \rmd n \,,
\]
then perform collision: uniformly sample an index $j\in\{1,\dots,N\}$, and sample $n\in\Sb^{d-1}$ according to the density $\widetilde{q}(x_{i_\rmc}(\tau_1),v_{i_\rmc}(\tau_1),x_j(\tau_1),v_j(\tau_1),n) \rmd n$, then update the velocity of particle $i_\rmc$ according to the rule
\[
v_{i_\rmc} \leftarrow v_{i_\rmc} + ((v_j - v_{i_\rmc})\cdot n)n\,;
\]
else jump to Step 4.

4. Update system time $t\leftarrow\tau_1$ and update the next collision time of particle $i_\rmc$ by $t_{i_\rmc}\leftarrow t_{i_\rmc} + \delta t$ with $\delta t \sim \mathrm{Exp}(\Lambda)$.

\State \textbf{end}
\State \textbf{Output:} Ensemble particles $\{(x_i(T),v_i(T))\}_{i=1}^N$.
\end{algorithmic}
\end{algorithm}

\subsection{Mean-field analysis for Boltzmann simulator}\label{sec:boltzmann_mean_field}

To provide the justification of these methods simulating the Boltzmann equation, one recognizes the algorithm calls for interactions between samples. The procedure naturally invites the theory of the mean-field derivation. We dedicate this section to provide a mean-field proof for the Nanbu and the Bird method.

It is clear that the coupling method of Section~\ref{sec:ensemble} can hardly be used. We switch our strategy and adopt the compactness argument for the associated martingale problems~\cite{GrMe:1997stochastic,Me:1996asymptotic,Sz:1984equations,ChDi:2022propagation_ii}. 
In Section~\ref{sec:particle_weak_form}, we lay out the weak formulation of the Vlasov-Boltzmann equation and introduce the associated martingale problem. 
In Section~\ref{sec:VB_martingale}, we follow a similar approach to introduce the weak formulation and the martingale problem for the two particle methods, the Nanbu sampler and the Bird sampler. 
A mean-field proof of the two particle methods is discussed in Section~\ref{sec:mean_field_proof}.

\subsubsection{Weak formulation and the Martingale problem}\label{sec:particle_weak_form}

Throughout the section, we assume that the PDE solution is absolutely continuous, so we rewrite $\mu_t(\rmd x, \rmd v)$ as $\mu_t(x,v)\rmd x \rmd v$. 
To obtain weak formulation, continuous test function $\phi\in C_\rmb^\infty(\Rb^{2d})$, we integrate it against~\eqref{eqn:mollified_boltzmann}, and obtain that
\begin{equation}\label{eqn:boltzmann_weak_int}
\frac{\rmd}{\rmd t} \int \phi(x,v) \mu_t(x,v) \rmd x \rmd v
= \int \Lc \phi(x,v) \mu_t(x,v) \rmd x \rmd v + \int \phi(x,v) \widetilde{Q} \mu_t(x,v) \rmd x \rmd v\,.
\end{equation}

The integral with respect to the collision term can be simplified through a pre-post-collisional change of variables~\cite{Vi:2002review}. According to~\eqref{eqn:new_velocities}, the change of velocities has a Jacobian whose determinant is $1$. Making use of~\eqref{eqn:velocity_diff} and that $\widetilde{q}$ depends only on $|v-w|$ and $(v-w)\cdot n$,
\[
\begin{aligned}
&\int \phi(x,v) \widetilde{Q} \mu_t(x,v) \rmd x \rmd v \\
&= \iint_{\Sb^{d-1}\times\Rb^{4d}} ~ ( \phi(x,v^\ast) - \phi(x,v) )\widetilde{q}(x,v,y,w,n) \mu_t(y,w) \mu_t(x,v) \, \rmd n \rmd y \rmd w \rmd x \rmd v \,.
\end{aligned}
\]
Let $\Kc$ denote the generator of the stochastic jump process
\begin{equation}
\Kc \phi(x,v,y,w) = \int_{\Sb^{d-1}} (\phi(x,v^\ast)-\phi(x,v)) \widetilde{q}(x,v,y,w,n) \rmd n  \,,
\end{equation}
and denote $\Cc[\mu_t]$ the collision operator acting on the test function $\phi$:
\begin{equation}
\Cc[\mu_t]\phi(x,v) = \left\langle \Kc \phi(x,v,\cdot,\cdot), \mu_t \right\rangle \\
= \iint_{\Rb^{2d}} \Kc \phi(x,v,y,w) \mu_t(y,w) \, \rmd y \rmd w \,.
\end{equation}
Then~\eqref{eqn:boltzmann_weak_int} has its weak formulation:
\begin{equation}\label{eqn:weak_mollified_boltzmann}
\frac{\rmd}{\rmd t} \left\langle \phi, \mu_t \right\rangle
 = \left\langle \Lc \phi, \mu_t \right\rangle
+ \left\langle \Cc[\mu_t]\phi, \mu_t \right\rangle = \left\langle \left(\Lc +\Cc[\mu_t]\right)\phi, \mu_t \right\rangle\,.
\end{equation}
Throughout the calculation, we use the bracket notation $\left\langle\cdot,\cdot\right\rangle$ to denote the integral of two functions in the full $(x,v)\in\Rb^{2d}$ domain.

This weak formulation~\eqref{eqn:weak_mollified_boltzmann} can then be translated to a martingale problem. For any $T>0$, we denote by $\Dc([0,T],\Rb^{2d})$ the Skorohod space\footnote{It is a convention to equip $\Dc([0,T],\Rb^{2d})$ with the $J_1$ topology~\cite{Bi:2013convergence}.} of all c\`adl\`ag functions defined over the interval $[0,T]$ and take values in $\Rb^{2d}$. Furthermore, denote $\Pc(\Dc([0,T],\Rb^{2d}))$ the collection of probability path measures over  $\Dc([0,T],\Rb^{2d})$. The mollified Vlasov-Boltzmann equation~\eqref{eqn:weak_mollified_boltzmann} can be translated to the following martingale problem:
\begin{problem}[Martingale problem associated with the mollified Vlasov-Boltzmann]\label{pro:mart_VB}
Find a path measure $P^\ast\in\Pc(\Dc([0,T],\Rb^{2d}))$ such that for any $\phi\in C_\rmb^\infty(\Rb^{2d})$ and for any initial condition $P_{\mathrm{ini}}\in\Pc_2(\Rb^{2d})$, the process defined by
\begin{equation}\label{eqn:boltzmann_martingale}
M^\phi_t(\omega) = \phi(Z_t(\omega)) - \phi(Z_0(\omega)) - \int_0^t \left(\Lc + \Cc[P^\ast_s]\right) \phi (Z_s(\omega)) \rmd s \,, \quad \forall\omega\in \Dc([0,T],\Rb^{2d}) \,,
\end{equation}
is a $P^\ast$-martingale and $(Z_0)_\sharp P^\ast = P_{\mathrm{ini}}$. Here we define $Z_t:\Dc([0,T],\Rb^{2d})\to\Rb^{2d}$ to be the canonical process such that $Z_t(\omega) = \omega(t)$ for any $\omega\in\Dc([0,T],\Rb^{2d})$, and $P^\ast_t = (Z_t)_\sharp P^\ast\in\Pc(\Rb^{2d})$ is the pushforward of the measure $P^\ast$ under the canonical map $Z_t$. In other words, $P^\ast_t$ is the $t$-marginal of the path measure.
\end{problem}

A weak form of the PDE is usually connected to the associated Martingale Problem, see e.g.~\cite{Ku:2011equivalence}. In our context, the solution to the Martingale Problem~\ref{pro:mart_VB} provides a solution to the weak form of the mollified Vlasov-Boltzmann equation~\eqref{eqn:weak_mollified_boltzmann}. In particular, let $P^\ast\in\Pc(\Dc([0,T],\Rb^{2d}))$ be the solution to the Martingale Problem~\ref{pro:mart_VB}, then one can check that its $t$-marginal satisfies the weak form equation of the mollified Vlasov-Boltzmann equation. To see this, we take the expectation on both sides of~\eqref{eqn:boltzmann_martingale}, and obtain
\begin{equation}\label{eqn:mart_to_boltzmann}
0 = \mathbb{E}_{P^\ast} [M_{t=0}^\phi] = \mathbb{E}_{P^\ast} [M_t^\phi]
= \mathbb{E}_{P^\ast} [\phi\circ Z_t] - \mathbb{E}_{P^\ast} [\phi\circ Z_0] - \int_0^t \mathbb{E}_{P^\ast} \left[ (\Lc \phi + \Cc[P^\ast_s] \phi ) \circ Z_s \right] \rmd s\,,
\end{equation}
where we used $\circ$ to denote function composition, the martingale property to propagate $\mathbb{E}_{P^\ast} [M_t^\phi]$ back to $\mathbb{E}_{P^\ast} [M_{t=0}^\phi]$, and that, according to the definition, $M_{t=0}^\phi(\omega)=0$.

Note that the $t$-marginal of $P^\ast$ is defined by $P^\ast_t = (Z_t)_\sharp P^\ast$, and hence for all $t$,
\[
\begin{aligned}
&\mathbb{E}_{P^\ast} [\phi\circ Z_t] = 
\int \phi(Z_t(\omega)) \rmd P^\ast (\omega)
= \left\langle \phi, P^\ast_t \right\rangle\,, \\ 
&\mathbb{E}_{P^\ast} \left[ (\Lc + \Cc[P^\ast_s])(\phi\circ Z_s) \right]
= \int ( \Lc \phi + \Cc[P^\ast_s] \phi ) (Z_s(\omega)) \rmd P^\ast (\omega)
= \left\langle \Lc \phi, P^\ast_s \right\rangle + \left\langle \Cc[P^\ast_s] \phi, P^\ast_s \right\rangle\,.
\end{aligned}
\]
Plug these back in~\eqref{eqn:mart_to_boltzmann} and identify $\mu_t = P^\ast_t$, we recover the weak form of the mollified Vlasov-Boltzmann equation~\eqref{eqn:weak_mollified_boltzmann}.

We do not delve into the details, but we do point out that just as one needs to show the well-posedness of the PDE, Problem~\ref{pro:mart_VB} also requires the well-posedness theory to ensure these is a unique path measure satisfying~\eqref{eqn:boltzmann_martingale}. Uniqueness can be proved under the mild regularity condition and the growth-condition of the cross-section kernel $\widetilde{q}(x,v,y,w,n)$. We refer interested readers to Theorem 2.2 in~\cite{Gr:1992nonlinear}.

\subsubsection{Weak formulation and the Martingale problems for the $N$-particle systems}\label{sec:VB_martingale}

We switch gears to study the discrete setting: the Nanbu method and the Bird method. In this section, we lay out the weak form PDEs, as Piecewise Deterministic Markov Processes for both methods and their associated martingale problems.

To do so, we first lift ourselves up first to the $N$-fold coordinate-system, with each coordinate keeping information of one particle: $x^N=(x_1,\dots,x_N)\in(\Rb^{d})^{\otimes N}$. Similarly, the velocity coordinate is also lifted up to the $N$-fold dimension by $v^N=(v_1,\dots,v_N)\in(\Rb^{d})^{\otimes N}$. We denote $\mu_t^N\in\mathcal{P}((\Rb^{2d})^{\otimes N})$ the distribution over the $N$-fold phase space.

Both Nanbu and Bird methods provide descriptions on how to propagate particles, so we can trace the evolution and translate them into the PDE language. For any given $\phi^N\in C_\rmb^\infty((\Rb^{2d})^{\otimes N})$,
\begin{equation}\label{eqn:particle_generator}
\frac{\rmd}{\rmd t} \left\langle \phi^N, \mu_t^N \right\rangle 
= \sum_{i=1}^N \left\langle \Lc_i \phi^N, \mu_t^N \right\rangle 
+ \sum_{i,j=1}^N \left\langle \Kc_{ij} \phi^N, \mu_t^N \right\rangle \,,
\end{equation}
where the free transport operator $\Lc_i$ acts only on the $i$-th coordinate pairs:
\begin{equation}\label{eqn:particle_free_generator}
    \Lc_i \phi^N(x^N,v^N) = v_i\cdot \nabla_{x_i} \phi^N(x^N,v^N) - \nabla_{x_i} f (x_i) \cdot \nabla_{v_i} \phi^N(x^N,v^N) \,, \quad i=1,\dots,N \,,
\end{equation}
and the form of the binary collision operator $\Kc_{ij}$ depends on the specific method in hand. For example, the Nanbu method only updates one coordinate, and it uses the following collision operator, for $i,j=1,\dots,N$:
\begin{equation}\label{eqn:particle_collision_generator}
    \Kc_{ij}^{(\rmN)} \phi^N(x^N,v^N) = \frac{1}{N}\int_{\Sb^{d-1}} (\phi^N(x^N,v^{N,i,\ast})-\phi^N(x^N,v^N)) \widetilde{q}(x_i,v_i,x_j,v_j,n) \rmd n\,,
\end{equation}
where
\[
v^{N,i,\ast} = (v_1,\dots,v_i + ((v_j-v_i)\cdot n)n,\dots,v_N)\,,
\]
is the post-collision velocity. We note that only the $i$-th coordinate is changed. The superindex $(\rmN)$ stands for Nanbu. The Bird method, on the other hand, picks a pair and updates them simultaneously. The associated collision operator then reads:
\begin{equation}
\Kc_{ij}^{(\rmB)} \phi^N(x^N,v^N) = \frac{1}{N}\int_{\Sb^{d-1}} (\phi^N(x^N,v^{N,i,j,\ast})-\phi^N(x^N,v^N)) \widetilde{q}(x_i,v_i,x_j,v_j,n) \rmd n\,,
\end{equation}
where
\[
v^{N,i,j,\ast} = (v_1,\dots,v_i + ((v_j-v_i)\cdot n)n,\dots,v_j + ((v_i-v_j)\cdot n)n, \dots, v_N)
\]
is the post-collision velocity, and both $(i,j)$-coordinates are changed. The super-index $(\rmB)$ stands for Bird. 

Similar to the Vlasov-Boltzmann equation, these weak formulations~\eqref{eqn:particle_generator} can be translated into martingale problems for the $N$-particle system. Similar to deriving the weak solution form, all terms are lifted up to the $N$-fold space. In particular, the $N$-fold path $\omega^{N}=(\omega_1,\dots,\omega_N)\in\Dc([0,T],(\Rb^{2d})^{\otimes N})$, and the path measure $P^N$ lives in the product space $\Pc(\Dc([0,T],(\Rb^{2d})^{\otimes N}))$, with the initial condition living in $P^N_{\mathrm{ini}}\in\Pc_2((\Rb^{2d})^{\otimes N})$. The martingale problem is to find a path on the product space:
\begin{problem}[Martingale problem associated with the $N$-particle system]\label{pro:mart_particle}
Find a path measure $P^N\in\Pc(\Dc([0,T],(\Rb^{2d})^{\otimes N}))$ such that for any test function $\phi^N\in C_\rmb((\Rb^{2d})^{\otimes N})$ and initial condition $P^N_{\mathrm{ini}}\in\Pc_2((\Rb^{2d})^{\otimes N})$, the process defined by
\begin{equation}\label{eqn:particle_martingale}
\begin{aligned}
&M^{\phi^N}_t(\omega^{N}) = \phi^N(Z^N_t(\omega^{N})) - \phi^N(Z^N_0(\omega^{N})) \\
&- \int_0^t \sum_{i=1}^N\Lc_i\phi^N (Z^N_s(\omega^{N})) + \sum_{i,j=1}^N \Kc_{ij} \phi^N (Z^N_s(\omega^{N})) \, \rmd s \,, \quad \forall\omega^{N}\in \Dc([0,T],(\Rb^{2d})^{\otimes N}) \,,
\end{aligned}
\end{equation}
is a $P^N$-martingale and $(Z^N_0)_\sharp P^N = P^N_{\mathrm{ini}}$. Here we define $Z^N_t:\Dc([0,T],(\Rb^{2d})^{\otimes N})\to(\Rb^{2d})^{\otimes N}$ to be the canonical process such that $Z^N_t(\omega^N) = \omega^N(t)$ for any $N$-fold path $\omega^{N}$.
\end{problem}

The well-posedness of Problem~\ref{pro:mart_particle} has also been investigated in~\cite{ShTa:1985central,EtKu:2009markov,JaSh:2013limit}.

\subsubsection{The mean-field analysis}\label{sec:mean_field_proof}

The mean-field analysis concerns connecting the two martingale problems and shows the convergence of empirical path measures. We now define the empirical path measure for the $N$-particle system 
\begin{equation}
Q_N(\omega^N) = \frac{1}{N} \sum_{i=1}^N \delta_{\omega_i}\in\Pc(\Dc([0,T],\Rb^{2d}))\,, \quad \forall \omega^N=(\omega_1,\dots,\omega_N)\in\Dc([0,T],(\Rb^{2d})^{\otimes N})\,.
\end{equation}
We note that $Q_N$ itself is a path measure, and due to randomness in $\omega^N$, $Q_N$ is not deterministic either, and it is a random path measure whose law lives in a probability measure space over the probability path measure. 
More specifically, $\Law(Q_N)\in\Pc(\Pc(\Dc([0,T],\Rb^{2d})))$. We want to prove that this random path measure, in the large $N$ limit, converges to a deterministic path measure. Hence its law is a delta measure in the probability over probability path measure space, centered at the solution that solves Problem~\ref{pro:mart_VB}.

\begin{theorem}[Theorem 4.5 in~\cite{GrMe:1997stochastic}]\label{thm:boltzmann_mean_field}
Suppose the initial condition $P_\mathrm{ini}^N$ is exchangeable in the sense that $P_\mathrm{ini}^N$ is invariant under the permutation of coordinates. 
Under Assumption~\ref{assumption:kernel_bound} and mild regularity assumptions on the cross-section $\widetilde{q}$ and the potential $f$, there is
\begin{equation}\label{eqn:path_measure_converge}
\Law(Q_N) := P^N \circ Q_N^{-1} \xrightarrow{N\to\infty} \delta_{P^\ast} \, \quad \text{ weak-$\ast$ in } \Pc(\Pc(\Dc([0,T],\Rb^{2d})))\,,
\end{equation}
where $P^\ast\in\Pc(\Dc([0,T],\Rb^{2d}))$ is the path measure solution to Problem~\ref{pro:mart_VB}.
\end{theorem}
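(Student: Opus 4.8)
The plan is to run the classical compactness-plus-uniqueness argument for martingale problems (in the spirit of~\cite{Sz:1984equations,Me:1996asymptotic,GrMe:1997stochastic}): establish tightness of the laws $\Law(Q_N)\in\Pc(\Pc(\Dc([0,T],\Rb^{2d})))$, show that every subsequential weak-$\ast$ limit is concentrated on path measures solving the limiting Martingale Problem~\ref{pro:mart_VB}, and then invoke well-posedness of that problem (cited from~\cite{Gr:1992nonlinear}) to collapse the limit to $\delta_{P^\ast}$. Since a tight family with a unique possible limit point converges, this yields~\eqref{eqn:path_measure_converge}. We treat the Nanbu and Bird generators $\Kc_{ij}^{(\rmN)},\Kc_{ij}^{(\rmB)}$ in parallel, the only real difference surfacing in the quadratic-variation bookkeeping of Step~3.

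\textbf{Steps 1--2 (a priori bounds and tightness).} First I would establish, uniformly in $N$ and $t\in[0,T]$, the moment bound $\sup_N \Eb\big[\sup_{t\le T}\big(\tfrac12|v_1(t)|^2+(1+|x_1(t)|^2)\big)\big]<\infty$, together with $\Eb\big[\#\{\text{jumps of particle }1\text{ on }[0,T]\}\big]\le\Lambda T$ from Assumption~\ref{assumption:kernel_bound}. Along the Hamiltonian flow~\eqref{eqn:hamiltonian} the mechanical energy $\tfrac12|v|^2+f(x)$ is conserved; each collision perturbs the kinetic energy by a bounded amount which, since the single-particle collision rate is $O(1)$ in both methods, integrates to a Gr\"onwall-controllable quantity (for Nanbu the one-sided update $v_{i_\rmc}\leftarrow v_{i_\rmc}+((v_{j}-v_{i_\rmc})\cdot n)n$ does not conserve the pair's energy, but its expected contribution is still controlled by $\Lambda$), and the confining, polynomially-growing-gradient assumption on $f$ converts energy bounds into position moments. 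Because $P^N_{\mathrm{ini}}$ and the dynamics are exchangeable, $Q_N$ is an exchangeable random measure, so by the standard criterion (e.g.~\cite[Ch.~I]{Sz:1984equations}) the laws $\Law(Q_N)$ are tight iff the intensity measures $\Eb[Q_N]$ — the laws of the single tagged path $Z^{N,1}_\cdot$ — are tight in $\Pc(\Dc([0,T],\Rb^{2d}))$. I would verify the latter with the Aldous--Rebolledo criterion: between jumps the compensator of $\phi(Z^{N,1}_\cdot)$ has total variation over an interval of length $\delta$ bounded by $C\delta$ (the collision part using the bound $2\|\phi\|_\infty\Lambda$), and the number and size of jumps over such an interval are controlled in probability by Step~1, giving control of the Skorokhod $J_1$-modulus.

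\textbf{Step 3 (identification of the limit).} Fix a countable determining family of data $\big(\phi\in C_\rmb^\infty(\Rb^{2d}),\,0\le s_1<\dots<s_k\le s<t,\,\psi_1,\dots,\psi_k\in C_\rmb(\Rb^{2d})\big)$ and define $G:\Pc(\Dc([0,T],\Rb^{2d}))\to\Rb$ by
\[
G(m)=\Big\langle m,\ \Big(\phi(Z_t)-\phi(Z_s)-\int_s^t(\Lc+\Cc[m_r])\phi(Z_r)\,\rmd r\Big)\prod_{\ell=1}^k\psi_\ell(Z_{s_\ell})\Big\rangle\,.
\]
A measure $m$ solves Problem~\ref{pro:mart_VB} (with time-zero marginal the limit of the one-particle marginals of $P^N_{\mathrm{ini}}$) exactly when $G(m)=0$ for all data in this family. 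Two facts finish the step. First, $G$ is bounded and continuous on the set of measures obeying the Step~1 moment bound: $\Kc\phi(x,v,\cdot,\cdot)$ is a bounded-difference integral against $\widetilde q$, hence bounded and continuous, so $m\mapsto\Cc[m_r]\phi$ is continuous, and the only unbounded term $\Lc\phi$ is handled by the polynomial growth of $\nabla f$ plus a standard truncation compatible with the moment bounds. Second, $\Eb[G(Q_N)^2]\to 0$: writing $G(Q_N)=\frac1N\sum_i\Xi_i^N$ with $\Xi_i^N=\prod_\ell\psi_\ell(Z^{N,i}_{s_\ell})\,(M^{\phi,i}_t-M^{\phi,i}_s)$, the point is that by~\eqref{eqn:particle_generator}--\eqref{eqn:particle_collision_generator} the compensator of the $i$-th coordinate reduces (up to the combinatorial rate normalization) to $\Cc[(Q_N)_r]\phi$, so each $M^{\phi,i}$ is an $N$-particle martingale with $\Eb[\Xi_i^N]=0$; moreover $\langle M^{\phi,i}\rangle_t=O(1)$, while the cross-variations $\langle M^{\phi,i},M^{\phi,j}\rangle$ vanish identically for $i\ne j$ in the Nanbu case (distinct particles never jump simultaneously) and are $O(1/N)$ in the Bird case (a given pair collides at rate $O(1/N)$), whence $\Eb[G(Q_N)^2]=\frac1{N^2}\big(\sum_i O(1)+\sum_{i\ne j}O(1/N)\big)=O(1/N)$. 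Passing to a subsequential limit $\pi$ of $\Law(Q_N)$, the first fact gives $\Eb_\pi[G(Q)^2]=\lim_N\Eb[G(Q_N)^2]=0$, so $\pi$-a.s. $Q$ makes every $M^\phi_t$ in~\eqref{eqn:boltzmann_martingale} a martingale, i.e. $Q$ solves Problem~\ref{pro:mart_VB}.

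\textbf{Step 4 (uniqueness, conclusion, and the hard part).} Under Assumption~\ref{assumption:kernel_bound} and the mild regularity of $\widetilde q$ and $f$, Problem~\ref{pro:mart_VB} has a unique solution $P^\ast$~\cite{Gr:1992nonlinear}, so $Q=P^\ast$ $\pi$-a.s., i.e. $\pi=\delta_{P^\ast}$; since the tight family $\{\Law(Q_N)\}$ admits $\delta_{P^\ast}$ as its only subsequential limit, the whole sequence converges weak-$\ast$ to $\delta_{P^\ast}$, which is~\eqref{eqn:path_measure_converge}. I expect the genuine difficulty to lie in Step~3: making $G$ honestly continuous despite the unbounded transport term (the localization must mesh with the Step~1 moment estimates), and carrying out the quadratic-variation computation cleanly — the whole mechanism rests on the $1/N$ scaling of the collision rates in~\eqref{eqn:particle_collision_generator}, which is precisely what turns the $O(1)$ fluctuation of a single tagged particle into the $O(N^{-1/2})$ fluctuation of the empirical average $Q_N$. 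The Step~1 energy estimate for Nanbu, where the asymmetric velocity update breaks pairwise energy conservation, is the other place that needs real (if routine) care.
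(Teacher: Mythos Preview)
Your proposal is correct and follows exactly the compactness-plus-uniqueness strategy the paper itself outlines: tightness of $\Law(Q_N)$, identification of any limit point as supported on solutions to Problem~\ref{pro:mart_VB}, and invocation of uniqueness from~\cite{Gr:1992nonlinear}. The paper does not actually prove Theorem~\ref{thm:boltzmann_mean_field} but only sketches this two-step machinery in a remark and defers to~\cite{GrMe:1997stochastic,Me:1996asymptotic}; your Steps~1--4 are in fact a more detailed rendering of that same argument than the paper provides.
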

\begin{remark}
The detailed assumption for this theorem to hold is technical. Generally speaking, according to~\cite{GrMe:1997stochastic}, these assumptions can be categorized into three groups: The gradient of $f$ is Lipschitz, the kernel $\widetilde{q}$ is smooth enough in its variables $(x,y,v,w)$, and the kernel $\widetilde{q}$ has suitable decay in its variable $(y,w)$. The same regularity assumptions are required in proving the well-posedness of the Martingale Problem~\eqref{pro:mart_VB} (see Theorem 2.2 in~\cite{Gr:1992nonlinear}). In the Bayesian sampling setting, the assumption on $f$ typically hold, and the assumptions on $\widetilde{q}$ can be satisfied through a proper algorithmic design.
\end{remark}

This convergence result leads to the mean-field limit. To be more precise, one can derive that the ensemble distribution living in $\Pc(\Rb^{2d})$, converges in law to the solution of the Vlasov-Boltzmann equation.
More precisely:
\begin{corollary}\label{lem:boltzmann_ensemble}
Consider the $N$-particle ensemble distribution, defined by
\begin{equation}
Q_{N,t}(\omega^N) := \frac{1}{N} \sum_{i=1}^N \delta_{(x_i(t),v_i(t))} \in \Pc(\Rb^{2d}) \,,
\end{equation}
where $\{(x_i(t),v_i(t))\}_{0\leq t\leq T}\in\Dc([0,T],\Rb^{2d})$ denotes the path of the $i$-th particle. Then $Q_{N,t}$ is a random measure on $\Rb^{2d}$.
Under the same assumption as Theorem~\ref{thm:boltzmann_mean_field}, and taking further assumptions that $Z_t:\Dc([0,T],\Rb^{2d})\to\Rb^{2d}$ is a continuous map, we know that $Q_{N,t}$ converges in distribution to $P^\ast_t$, which we identify with $\mu_t$, the solution of the mollified Vlasov-Boltzmann equation~\eqref{eqn:mollified_boltzmann}. In other words, we have the convergence of the law of $Q_{N,t}$:
\begin{equation}\label{eqn:boltzmann_ensemble_convergence}
\mathrm{Law}(Q_{N,t}) := P^N\circ Q^{-1}_{N,t} \xrightarrow{N\to\infty} \delta_{\mu_t} \quad \text{ weak-$\ast$ in } \Pc(\Pc(\Rb^{2d})) \,.
\end{equation}
\end{corollary}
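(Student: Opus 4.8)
The plan is to realize $Q_{N,t}$ as the image of the empirical path measure $Q_N$ under the time-$t$ evaluation map, so that the corollary follows from Theorem~\ref{thm:boltzmann_mean_field} by an application of the continuous mapping theorem; essentially all of the analytic difficulty has already been absorbed into that theorem. First I would record the elementary identity
\begin{equation}\label{eqn:pushforward_identity}
Q_{N,t} = (Z_t)_\sharp Q_N \,,
\end{equation}
which holds because $(Z_t)_\sharp\!\left(\frac{1}{N}\sum_i \delta_{\omega_i}\right) = \frac{1}{N}\sum_i \delta_{\omega_i(t)} = \frac{1}{N}\sum_i \delta_{(x_i(t),v_i(t))}$. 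Introducing the marginal-at-time-$t$ map
\[
\Pi_t : \Pc(\Dc([0,T],\Rb^{2d})) \to \Pc(\Rb^{2d})\,, \qquad \Pi_t(m) = (Z_t)_\sharp m \,,
\]
we have $Q_{N,t} = \Pi_t(Q_N)$, and hence, at the level of laws, $\Law(Q_{N,t}) = P^N\circ Q_{N,t}^{-1} = \Law(Q_N)\circ \Pi_t^{-1}$, the pushforward of $\Law(Q_N)\in\Pc(\Pc(\Dc([0,T],\Rb^{2d})))$ under $\Pi_t$.

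Second I would verify that $\Pi_t$ is weak-$\ast$ continuous (it suffices to have continuity at the single point $P^\ast$, since the limiting law $\delta_{P^\ast}$ is a Dirac mass). For any $g\in C_\rmb(\Rb^{2d})$, the assumed continuity of $Z_t:\Dc([0,T],\Rb^{2d})\to\Rb^{2d}$ makes $g\circ Z_t$ a bounded continuous functional on $\Dc([0,T],\Rb^{2d})$, and
\[
\left\langle g, \Pi_t(m) \right\rangle = \int g(Z_t(\omega))\, m(\rmd\omega) = \left\langle g\circ Z_t, m \right\rangle \,,
\]
so $m\mapsto \langle g,\Pi_t(m)\rangle$ is weak-$\ast$ continuous; as $g$ is arbitrary, $\Pi_t$ is weak-$\ast$ continuous. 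Applying the continuous mapping theorem to the convergence~\eqref{eqn:path_measure_converge} of Theorem~\ref{thm:boltzmann_mean_field} then gives
\[
\Law(Q_{N,t}) = \Law(Q_N)\circ\Pi_t^{-1} \xrightarrow{N\to\infty} \delta_{P^\ast}\circ \Pi_t^{-1} = \delta_{\Pi_t(P^\ast)} = \delta_{P^\ast_t} \,,
\]
weak-$\ast$ in $\Pc(\Pc(\Rb^{2d}))$. Finally, identifying $P^\ast_t$ with $\mu_t$ — this is precisely the computation leading to~\eqref{eqn:mart_to_boltzmann}, showing that the $t$-marginal of any solution of the martingale Problem~\ref{pro:mart_VB} solves the weak form~\eqref{eqn:weak_mollified_boltzmann} of the mollified Vlasov--Boltzmann equation, combined with uniqueness for that weak form — yields~\eqref{eqn:boltzmann_ensemble_convergence}.

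The delicate point, and the reason the continuity of $Z_t$ is stated as a hypothesis, is that the evaluation map on the Skorohod space with the $J_1$ topology is \emph{not} continuous at paths that jump exactly at time $t$. In our setting this is essentially harmless: collisions occur at exponentially distributed, hence atomless, random times, so $P^\ast$ assigns zero mass to the set of paths that are discontinuous at the fixed time $t$, and therefore $\Pi_t$ is continuous at $P^\ast$ in the sense the continuous mapping theorem actually requires, even without the blanket assumption. Turning this observation into a rigorous statement amounts to a Portmanteau-type argument controlling the $P^\ast$-measure of the discontinuity set of $\omega\mapsto g(\omega(t))$, and is the only genuinely technical ingredient specific to this corollary; the real work — tightness of $\{\Law(Q_N)\}$ on path space and the identification of every subsequential limit with $\delta_{P^\ast}$ — is already delivered by Theorem~\ref{thm:boltzmann_mean_field}.
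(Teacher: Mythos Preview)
Your proposal is correct and follows essentially the same approach as the paper: both arguments rest on the pushforward identity $Q_{N,t}=(Z_t)_\sharp Q_N$ and then transfer the path-measure convergence of Theorem~\ref{thm:boltzmann_mean_field} to the time-$t$ marginals via the (assumed) continuity of $Z_t$, with the paper testing directly against $g\in C_\rmb(\Pc(\Rb^{2d}))$ and you phrasing the same step as the continuous mapping theorem applied to $\Pi_t$. Your added paragraph on why the continuity hypothesis on $Z_t$ is essentially harmless mirrors the paper's own remark immediately following the proof.
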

\begin{proof}
Note that the $N$-particle ensemble distribution at time $t$ can be written as the pushforward of the empirical path measure:
\begin{equation}\label{eqn:path_emp_to_particle_emp}
Q_{N,t}(\omega^N) = (Z_t)_\sharp Q_N(\omega^N) \in \Pc(\Rb^{2d}) \,.
\end{equation}
For any test function $g\in C_\rmb(\Pc(\Rb^{2d}))$, by making use of~\eqref{eqn:path_emp_to_particle_emp} and the pushforward definition:
\begin{equation}\label{eqn:test_particle_emp}
\int_{\Pc(\Rb^{2d})} g(\mu) \, \rmd P^N \circ Q^{-1}_{N,t}(\mu)
= \int_{\Pc(\Dc([0,T],\Rb^{2d}))} g((Z_t)_\sharp P) \, \rmd P^N \circ Q^{-1}_{N}(P) \,.
\end{equation}

Assuming the continuity of the canonical map $Z_t$, we know that the functional $h(P) = g((Z_t)_\sharp P)$ is bounded and continuous, that is, $h\in C_\rmb(\Pc(\Dc([0,T],\Rb^{2d})))$.

Noting the measure $P^\ast_t$ is a pushforward of $P^\ast$: $P^\ast_t = (Z_t)_\sharp P^\ast \in \Pc(\Rb^{2d})$, we draw the conclusion from Theorem~\ref{thm:boltzmann_mean_field} to rewrite~\eqref{eqn:test_particle_emp} for the convergence of the law of the ensemble distribution:
\begin{equation}\label{eqn:ensemble_functional_conv}
\int_{\Pc(\Rb^{2d})} g(\mu) \, \rmd P^N \circ Q^{-1}_{N,t}(\mu)
\xrightarrow{N\to\infty}  \int_{\Pc(\Dc([0,T],\Rb^{2d}))} g((Z_t)_\sharp P) \, \rmd \delta_{P^\ast}(P) 
= \int_{\Pc(\Rb^{2d})} g(\mu) \, \rmd \delta_{\mu_t}(\mu)\,.
\end{equation}
\end{proof}

In general, the map $Z_t$ is not a continuous map from $\Dc([0,T],\Rb^{2d})$ to $\Rb^{2d}$, but the conclusion still holds. 
We refer the readers to~\cite{GrMe:1997stochastic,Me:1996asymptotic} for a rigorous justification of the convergence.

\begin{remark}
    We stress a few remarks.
    \begin{itemize}
        \item There are many layers of randomness. The algorithm encodes Poisson randomness, so even with fixed initial data, each run of the algorithm provides a different configuration of $Q_{N,t}$, and thus it presents a probability measure over $\Rb^{2d}$, which means $Q_{N,t}\in\Pc(\Rb^{2d})$. At the initial time, $N$ samples are drawn at random, and putting all $Q_{N,t}$ equipped with different initial data together, one forms another probability measure over $\Pc(\Rb^{2d})$. Theorem~\ref{thm:boltzmann_mean_field}, and consequently Corollary~\ref{lem:boltzmann_ensemble} claim the convergence of the law of $Q_{N,t}$, in the space of probability over probability on $\Rb^{2d}$, to a Dirac delta centered on the solution to the Boltzmann equation~\eqref{eqn:mollified_boltzmann}.
        \item As spelled out in~\eqref{eqn:ensemble_functional_conv}, the convergence is rather weak: The weak-$\ast$ convergence on $\Pc(\Pc(\Rb^{2d}))$ means that the test function $g$ in actually a \emph{functional} over $\Pc(\Rb^{2d})$ itself. However, this weak convergence is strong enough to provide convergence of all moments. To see this, we set
        \[
        g(\mu) = \int (|x|^p+|v|^p)\mu(\rmd x\rmd v)\,,
        \]
        a moment-taking functional. It is clearly a continuous bounded functional over $\Pc_p(\Rb^{2d})$. In the limit $N\to\infty$, $\int_{\Pc(\Rb^{2d})} g(\mu) \, \rmd \delta_{\mu_t}(\mu)=g(\mu_t)$ provides the moments of the solutions to the Vlasov-Boltzmann equation. The left-hand side in~\eqref{eqn:boltzmann_ensemble_convergence} when tested by moments, or equivalently~\eqref{eqn:ensemble_functional_conv}, can be interpreted as
        \[       \Eb_{P^N_{\mathrm{ini}}}\Eb_{\omega}\left(\frac{1}{N}\sum_i(|x_i(t)|^p+|v_i(t)|^p)\right)
        \]
        with the two expectations taking over different initial configurations and different Poisson clock ringings, and hence all randomness involved in the algorithms.
        \item Although we do not dive into details, the proof of Theorem~\ref{thm:boltzmann_mean_field} follows a fixed set of machinery, see~\cite{GrMe:1997stochastic,Me:1996asymptotic}. The proof strategy heavily relies on the compactness argument and is composed of two steps.
\begin{itemize}
    \item[Step 1.] The law of $Q_N$ is tight so that there exist a limit point $\mathfrak{Q}_\infty\in\Pc(\Pc(\Dc([0,T],\Rb^{2d})))$;
    \item[Step 2.] For $\mathfrak{Q}_\infty$-almost surely $P\in\Pc(\Dc([0,T],\Rb^{2d}))$, $P$ solves the mollified Vlasov-Boltzmann Martingale Problem. 
\end{itemize}
These two steps combined, and calling the uniqueness to the Martingale Problem~\ref{pro:mart_VB}, we have $\mathfrak{Q}^\infty = \delta_{P^\ast}$ with $P^\ast$ providing the weak solution to the mollified Vlasov-Boltzmann equation~\eqref{eqn:weak_mollified_boltzmann}.
    \end{itemize}
\end{remark}

\subsection{Numerical experiments}\label{sec:boltzmann_numerics}

In this section, we report on numerical experiments. We consider problems with $d = 1$ and $d = 2$ in the following. 
It is worth noting that for $d=1$, the collision operator in the Vlasov-Boltzmann equation~\eqref{eqn:boltzmann} becomes zero, whereas the mollified collision operator remains nontrivial and continues to play a role in the convergence to the target distribution.

To set up the problem, we consider a target probability measure $\rho^\ast\in\Pc(\Rb^d)$, where $\rho^\ast\propto\exp(-f(x))$. 
Initially, we sample $N$ particles $\{(x_i(0),v_i(0))\}_{i=1}^N$ independently from a distribution $\mu_0\in\Pc(\Rb^{2d})$, defined by
\begin{equation}\label{eqn:numerics_initial_distribution}
\mu_0\propto \mathds{1}_{|x|_\infty\leq L} \cdot \exp\left(-\frac{|v|^2}{2\sigma^2}\right) \,, \quad \forall (x,v)\in\Rb^{2d} \,.
\end{equation}
This initial condition is chosen to fit the energy condition~\eqref{eqn:boltzmann_convergence_conditions} required for achieving the convergence of the PDE, Theorem~\ref{thm:boltzmann_equilibrium}. This translates to the following configuration for $\sigma$ when $L$ is fixed.

\[
\sigma^2 = 1 + \frac{2}{d} \left(\frac{\int_{\Rb^d} f(x) \exp(-f(x)) \rmd x}{ \int_{\Rb^d} \exp(-f(x)) \rmd x} - \frac{1}{(2L)^d} \int_{|x|_\infty\leq L} f(x) \rmd x \right) \,.
\]
We take the following regularized collision kernel
\begin{equation}\label{eqn:boltzmann_kernel_numerics}
\widetilde{q}(x,v,y,w,n) = \frac{1}{\left(\epsilon\sqrt{\pi}\right)^d}\exp\left(-\frac{|x-y|^2}{\epsilon^2}\right) \,,
\end{equation}
so that the total cross-section $\Lambda = \frac{2} {\epsilon\sqrt{\pi}}$ if $d=1$ and $\Lambda = \frac{2} {\epsilon^2\pi}$ if $d=2$. Since the regularized collision kernel is independent of $n$, upon collision, the vector $n$ is sampled uniformly in the set $\{-1,1\}$ when $d=1$, and uniformly in the circle $\Sb^1$ when $d=2$.
\begin{remark}
We choose the form of $\widetilde{q}$ for its simplicity. We do observe that the numerical performance of the algorithm depends on $\epsilon$. Preliminary numerical test suggests larger $\epsilon$ leads to a faster convergence of the PDE, and requires a smaller number of particles to simulate the PDE. This observation suggests a better algorithm performance for a larger $\epsilon$. We do not yet have a theoretical justification to support this observation. More numerical examples are also needed to support this finding.
\end{remark}

To measure the sampling performance, we consider two different metrics using the Kullback–Leibler (KL) divergence: the KL-divergence between the $x$-marginal distribution, defined by
\begin{equation}
KL_X(\mu_X(t) || \mu_X^\ast) = \int_{\Rb^d} \mu_X(t,x) \ln\frac{\mu_X(t,x)}{\mu_X^\ast(t,x)} \rmd x = \int_{\Rb^d} \mu_X(t,x) \ln \mu_X(t,x) \rmd x + \int_{\Rb^d} f(x) \mu_X(t,x) \rmd x \,,
\end{equation}
where $\mu_X$ denotes the $x$-marginal of $\mu$, and the KL divergence between the phase space distribution, defined by
\begin{equation}
\begin{aligned}
KL(\mu(t) || \mu^\ast) &= \int_{\Rb^{2d}} \mu(t,x,v) \ln\frac{\mu(t,x,v)} {\mu^\ast(t,x,v)} \, \rmd x \, \rmd v \\
&= \int_{\Rb^{2d}} \mu(t,x,v) \ln \mu(t,x,v) \, \rmd x \, \rmd v + \int_{\Rb^{2d}} \left( f(x) + \frac{v^2}{2} \right) \mu(t,x,v) \, \rmd x \, \rmd v \,.
\end{aligned}
\end{equation}
In both the Nanbu sampler and the Bird sampler, the measure $\mu(t,x,v)$ is approximated by the ensemble distribution, defined by $\mu(t,x,v) = \frac{1}{N} \sum_{i=1}^N \delta_{(x_i(t),v_i(t))}(x,v)$. 
Neither of the KL-divergence is defined for the ensemble distribution. 
To circumvent this difficulty, we evaluate the following mollified KL-divergence, defined by
\begin{equation}\label{eqn:KL_numerics}
\begin{aligned}
    KL_X^\delta &= \frac{1}{N} \sum_{i=1}^N \ln\left( \frac{1}{N} \sum_{j=1}^N \rho^\delta(x_i(t)-x_j(t)) \right) + \frac{1}{N} \sum_{i=1}^N f(x_i(t)) \,. \\
    KL^\delta &= \frac{1}{N} \sum_{i=1}^N \ln\left( \frac{1}{N} \sum_{j=1}^N \rho^\delta(x_i(t)-x_j(t)) \rho^\delta(v_i(t)-v_j(t)) \right)   + \frac{1}{N} \sum_{i=1}^N \left[ f(x_i(t)) + \frac{v_i^2(t)}{2} \right]\,.
\end{aligned}
\end{equation}
Here, we take the mollifier $\rho^\delta(x) = (8\pi\delta^2)^{-d/2} \exp(-|x|^2/(8\delta^2))$.

In all the examples below, to generate reference solutions, we use the Inverse Transform Sampling (see, e.g., page 102 in~\cite{Ge:2003random}) to obtain $N=1000$ samples from $\mu^\ast_X \propto e^{-f(x)}$. In the simple settings like ours ($d=1,2$), Inverse Transform Sampling can still be deployed relatively easily~\cite{Ge:2003random}. The obtained samples are then used to compute a baseline value for the regularized KL divergences as the reference data (we term it the baseline solution in the plots).

Both algorithms are run with $N=1000$ particles. For simulating the free transport term~\eqref{eqn:hamiltonian}, Verlet algorithm~\cite{SwAnBeWi:1982computer} is deployed as the discretization step. In~\eqref{eqn:boltzmann_kernel_numerics}, we take the parameter $\epsilon = 1$ when $d=1$ and $\epsilon = 4$ when $d=2$ to ensure the time interval between collision is not too large. In~\eqref{eqn:KL_numerics}, we take the regularization constant $\delta = 0.3$ across all examples. 
We take the support of initial condition $L = 2$ in~\eqref{eqn:numerics_initial_distribution}.

In the first example, we set $f(x) = \frac{x^2}{2}$. Figure~\ref{fig:boltzmann_ex1} shows the decrease in the relative entropy $KL_X^\delta$, $KL^\delta$ and the histogram of samples obtained by the Nanbu and Bird sampler. Both methods provide ensemble distribution that quickly converge to the target distribution. 
The phase space relative entropy decays monotonically at the initial stage, which is aligned with the $H$-theorem~\cite{Vi:2002review}. 
This is because the phase space entropy can be decomposed into the sum of the absolute entropy and the total energy, where the former monotonically decreases, and the latter remains constant.
Following the initial stage, the phase space entropy oscillates near the equilibrium due to the randomness of the particles.
We observe that the Nanbu method exhibits larger fluctuations and reaches saturation earlier. This is because the energy is not conserved upon collisions in the Nanbu method. 
In comparison, the Bird method demonstrates better stability due to the energy conservation.
\begin{figure}[htbp]
  \centering
  \includegraphics[width=0.22\textwidth]{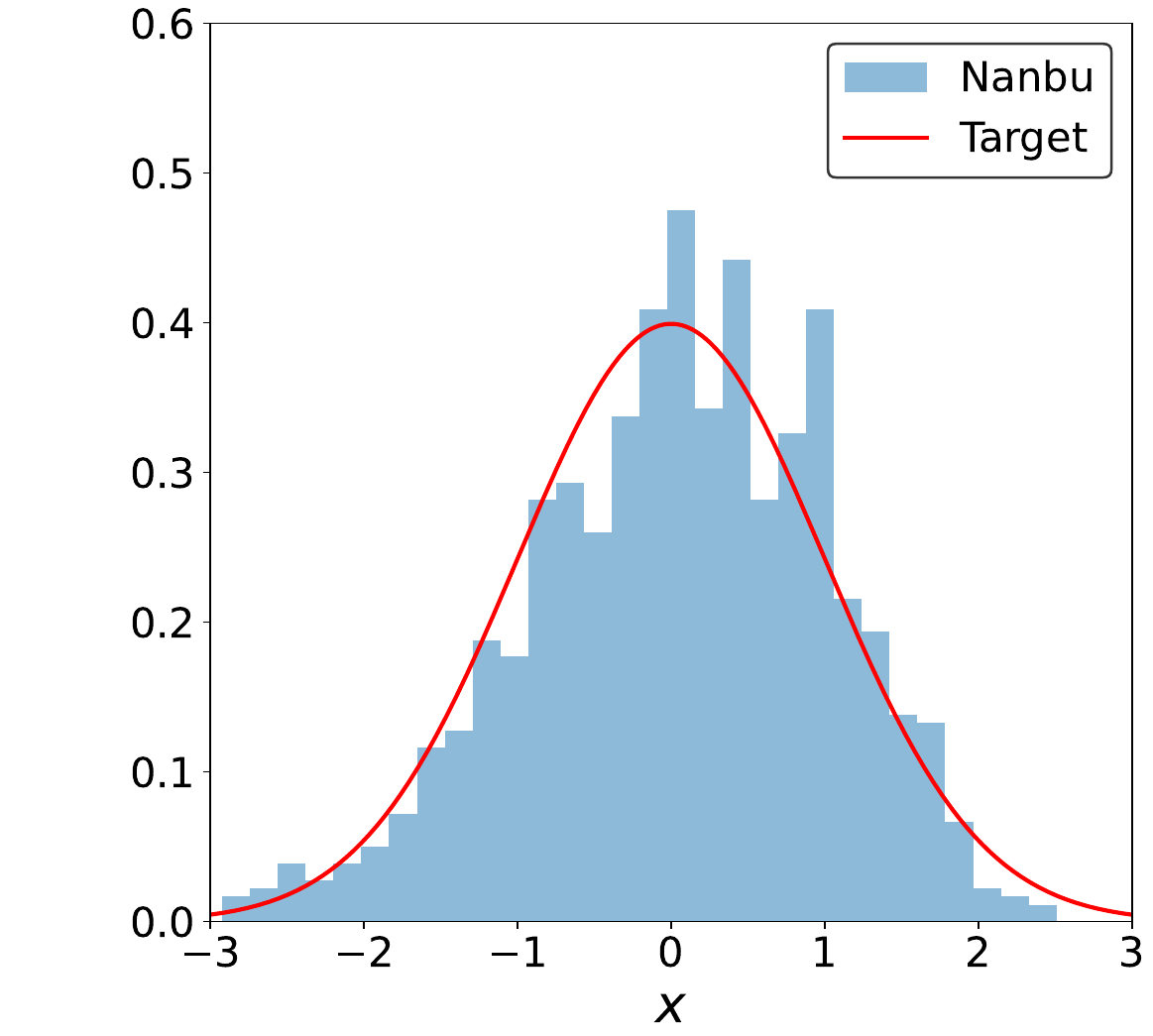}
  \includegraphics[width=0.22\textwidth]{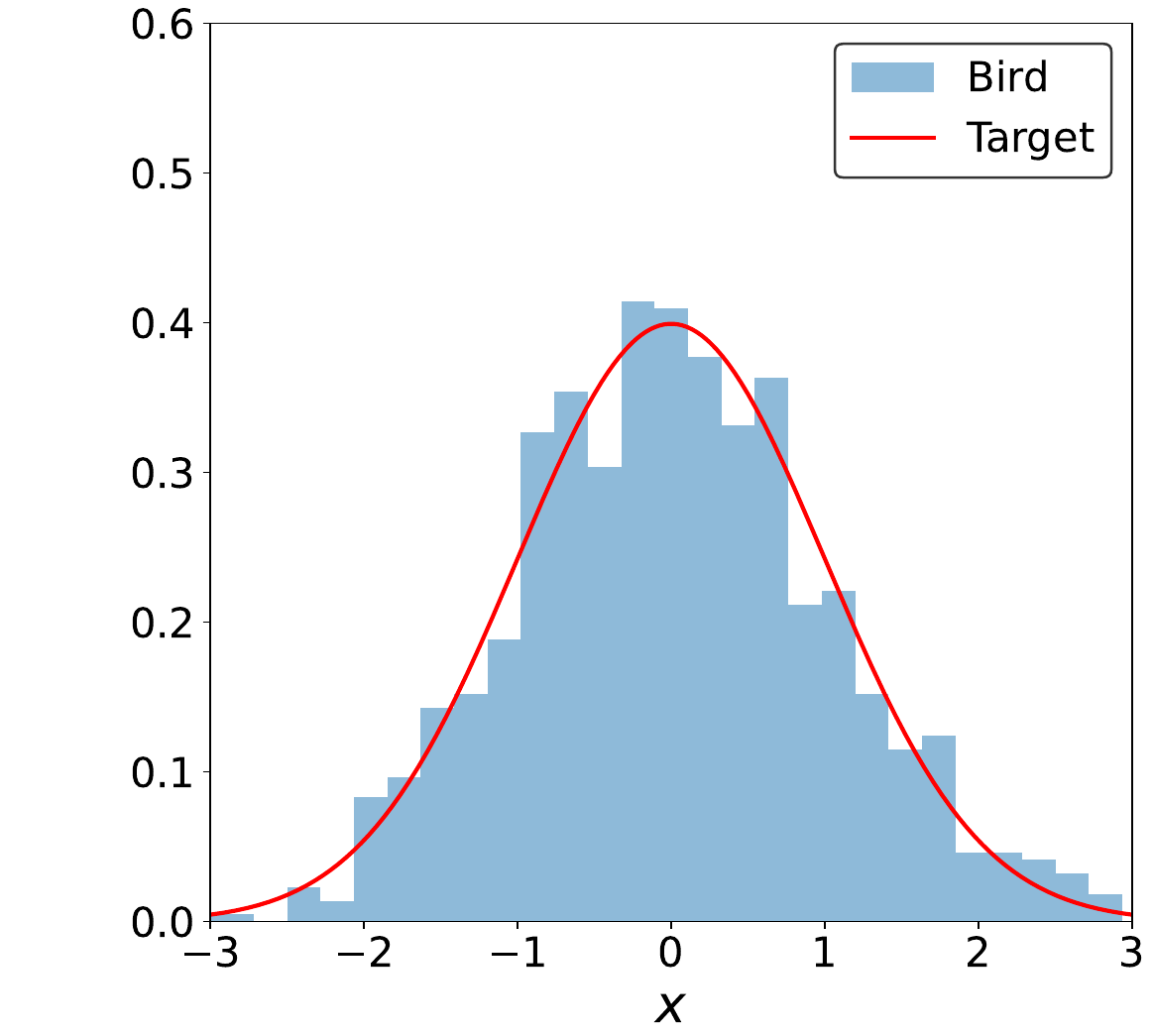}
  \includegraphics[width=0.22\textwidth]{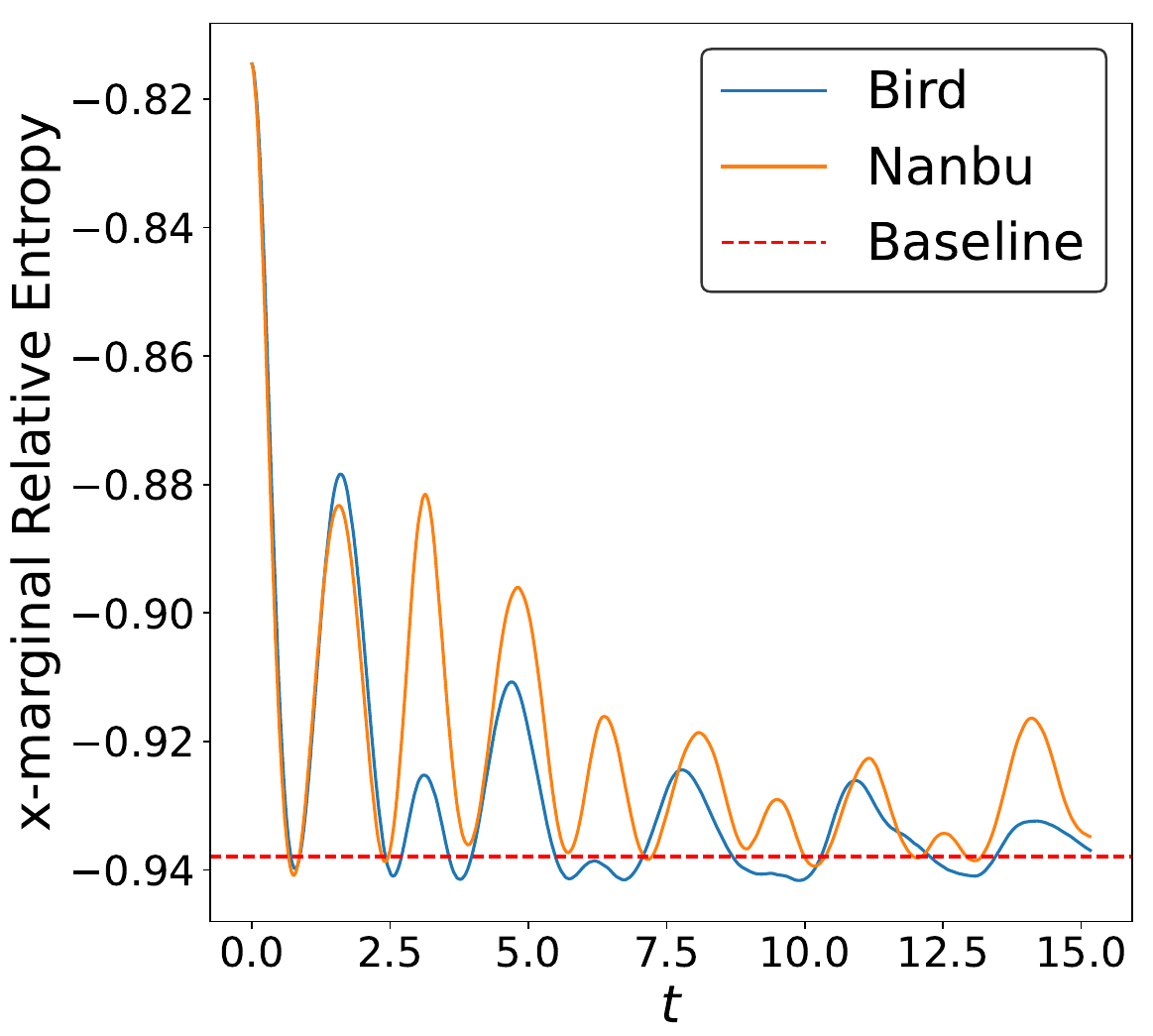}
  \includegraphics[width=0.22\textwidth]{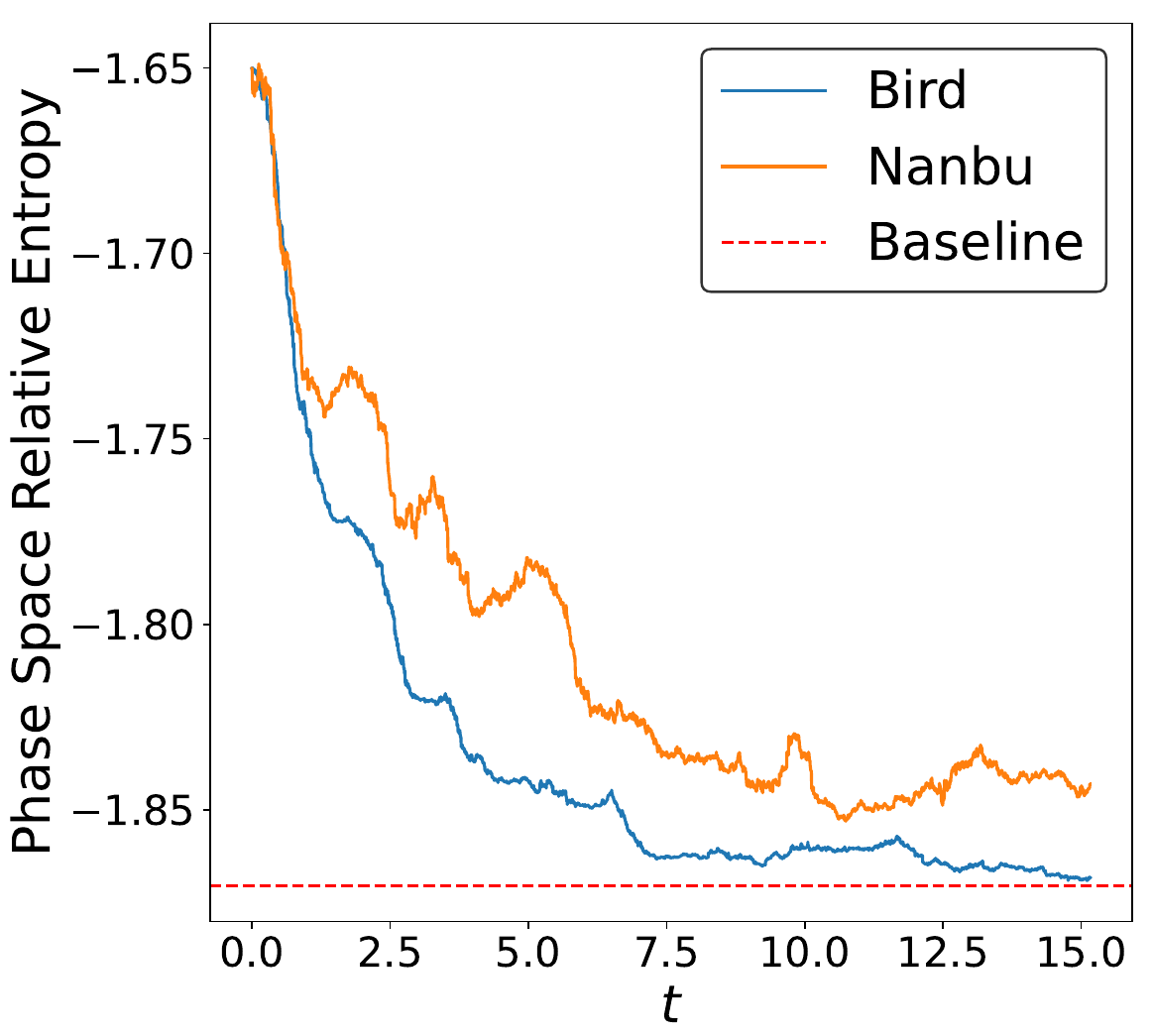}
  
  \caption{\textbf{Left Two Plots:} The target distribution $\rho^\ast \propto \exp(-\frac{x^2}{2})$ and the histogram of samples obtained from the Nanbu sampler (Leftmost) and the Bird sampler (Second-Left). \textbf{Right Two Plots:} The regularized relative entropy $KL_X^\delta$ (Second-Right) and $KL^\delta$ (Rightmost) of the Bird sampler and the Nanbu sampler. The baseline value is computed by using $N=1000$ samples obtained by the Inverse Transform method.
  }
  \label{fig:boltzmann_ex1}
\end{figure}

In the second example, we set $f(x) = (x-1)^2(x+1)^2$ so the target distribution is clearly not log-concave. In Figure~\ref{fig:boltzmann_ex2} we nevertheless show the relative entropy $KL_X^\delta$ and $KL^\delta$ decreases over time. This is a strong indication in this strongly non-convex case, the Boltzmann simulator can nevertheless achieve global convergence.
\begin{figure}[htbp]
  \centering
  \includegraphics[width=0.22\textwidth]{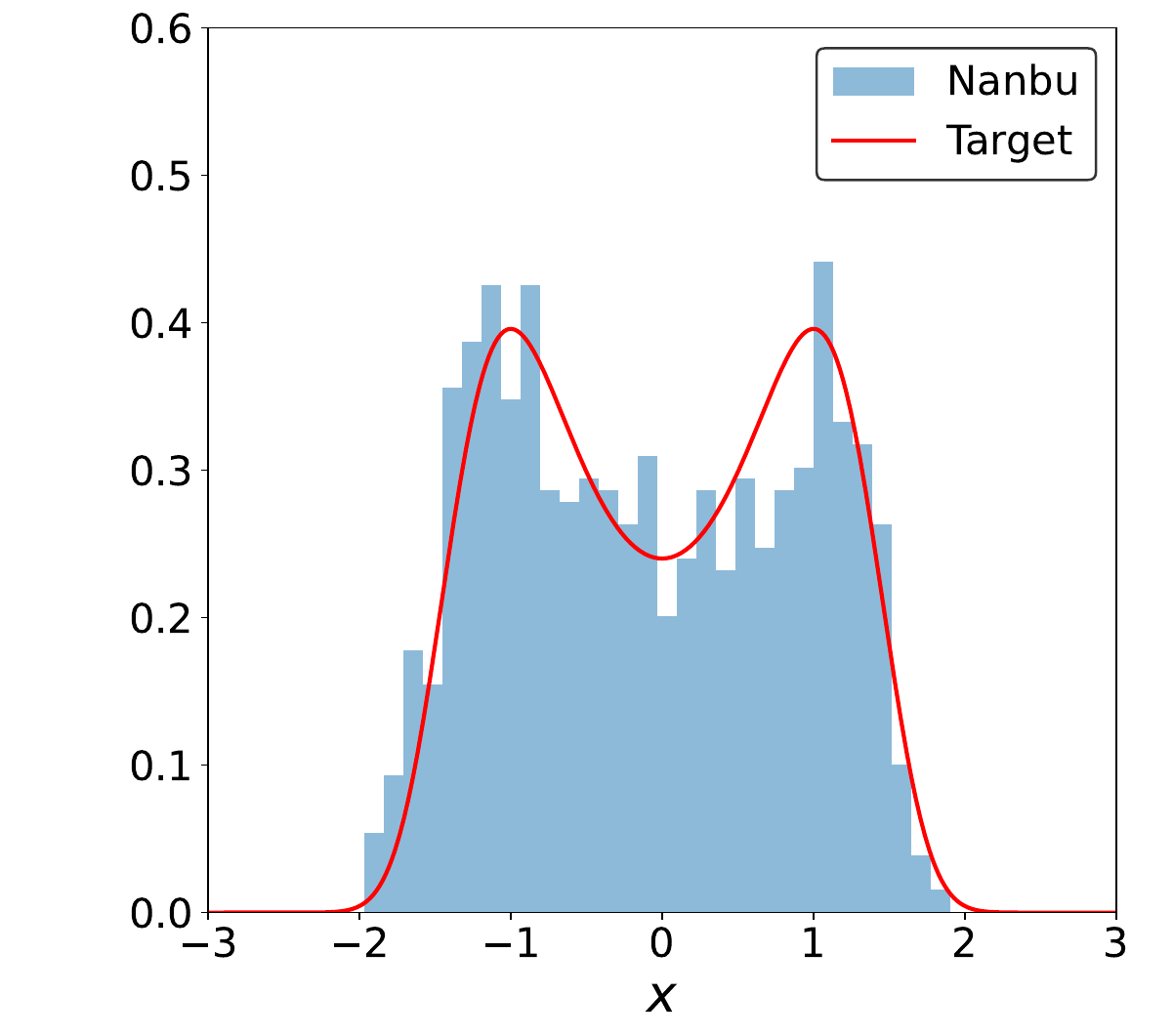}
  \includegraphics[width=0.22\textwidth]{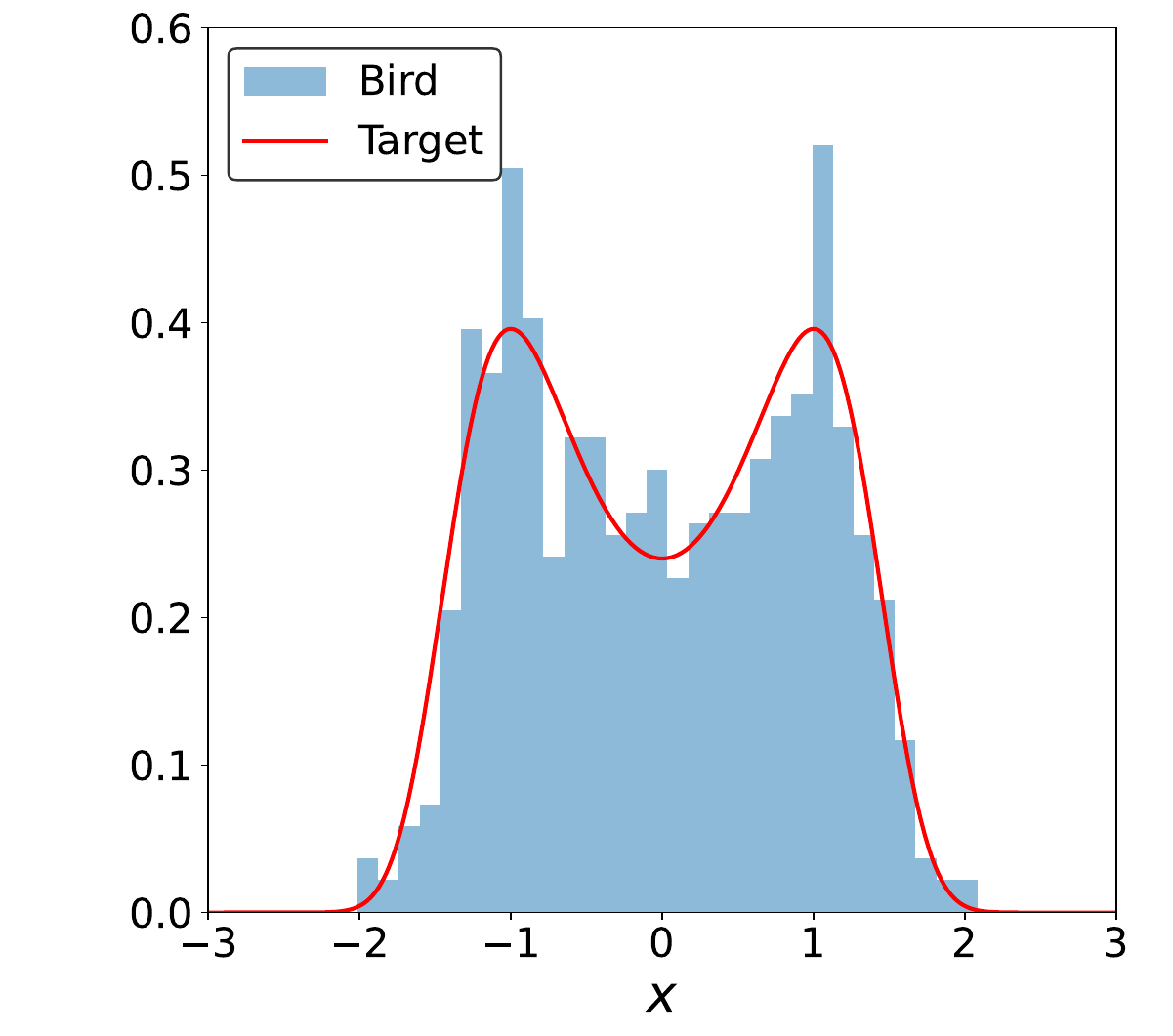}
  \includegraphics[width=0.22\textwidth]{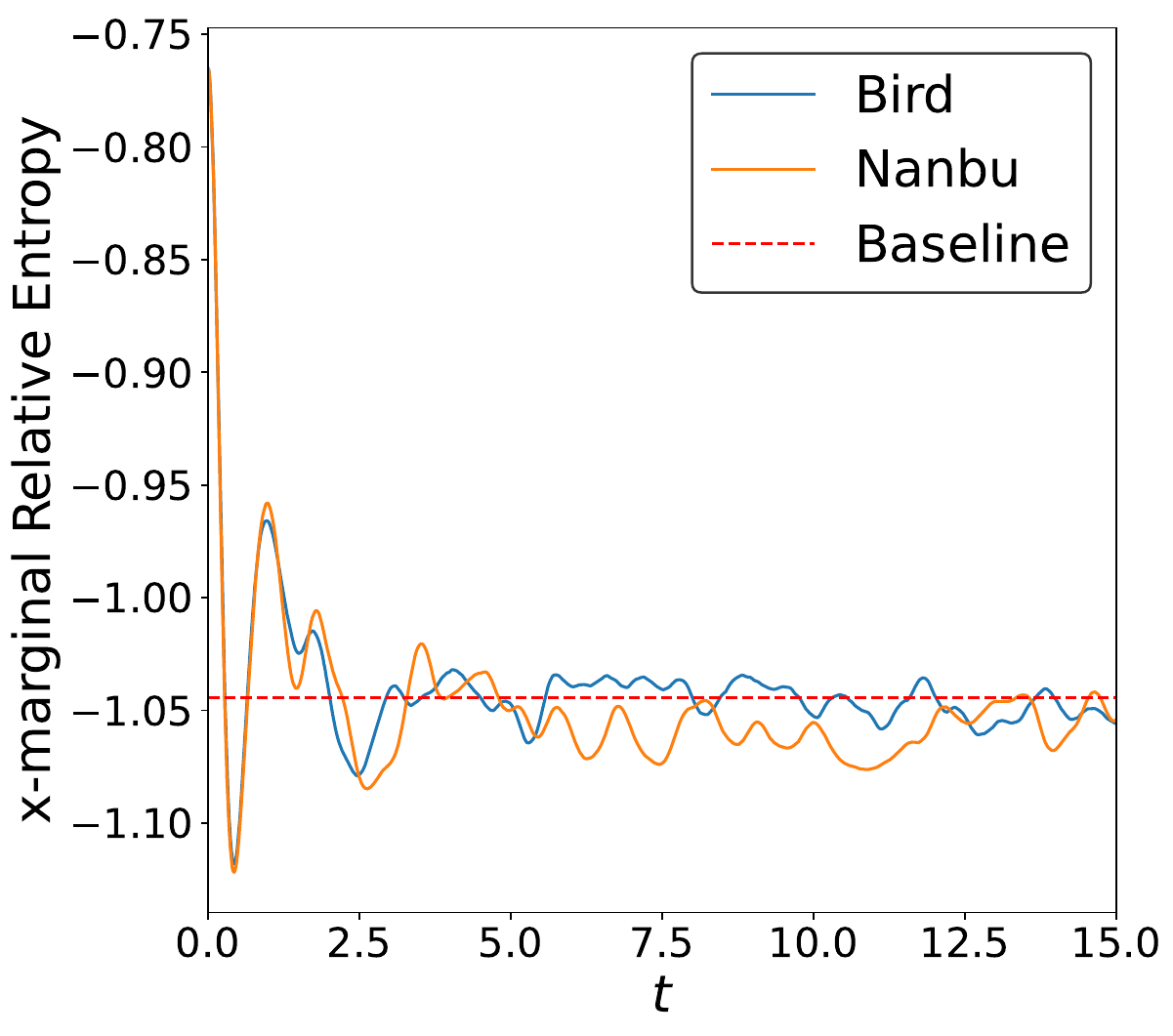}
  \includegraphics[width=0.22\textwidth]{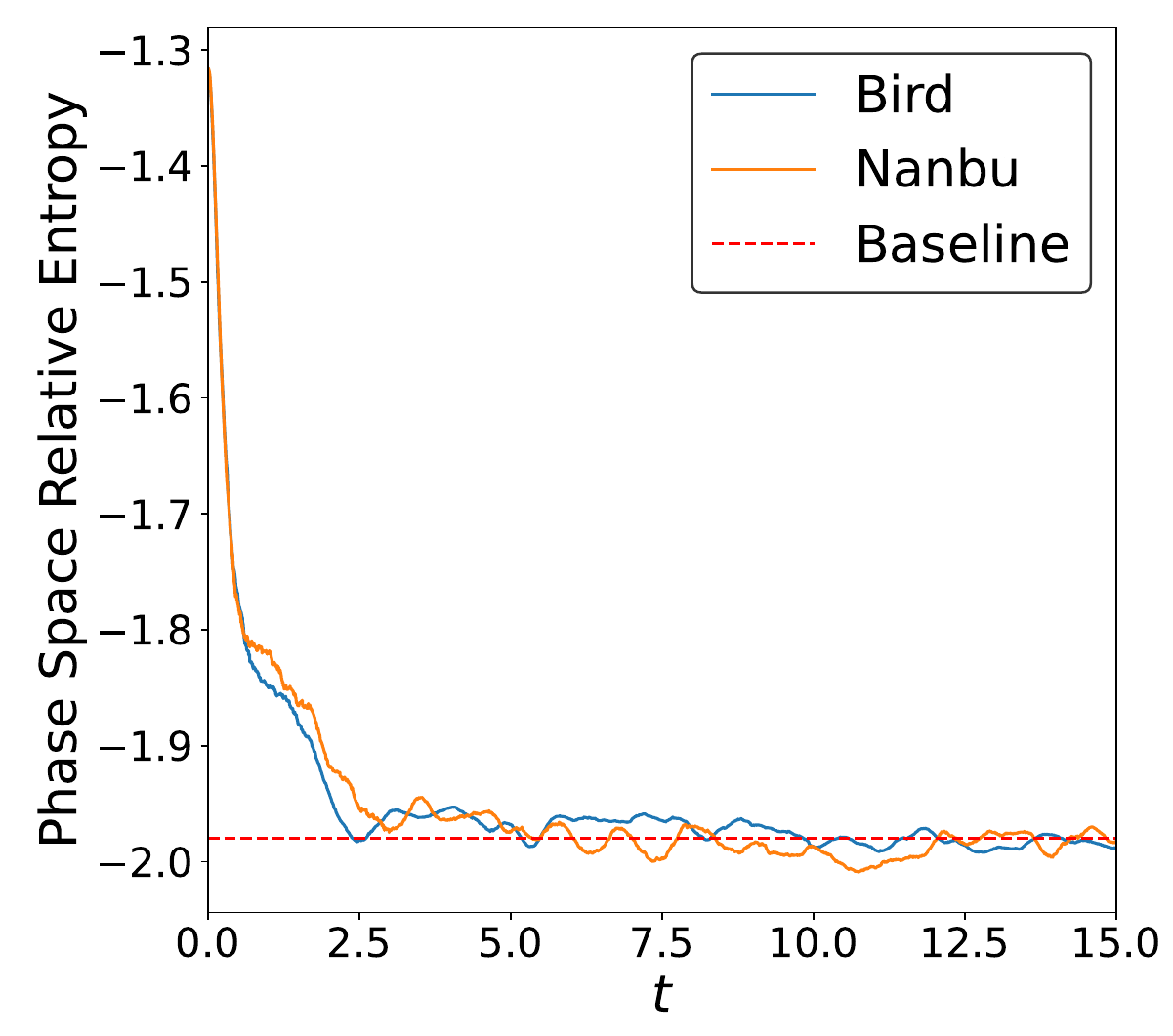}
  \caption{\textbf{Left Two Plots:} The target distribution $\rho^\ast \propto \exp(-(x-1)^2(x+1)^2)$ and the histogram of samples obtained from the Nanbu sampler (Leftmost) and the Bird sampler (Second-Left). \textbf{Right Two Plots:}  The regularized relative entropy $KL_X^\delta$ (Second-Right) and $KL^\delta$ (Rightmost) of the Bird sampler and the Nanbu sampler. The baseline value is computed by using $N=1000$ samples obtained by the Inverse Transform method.
  }
  \label{fig:boltzmann_ex2}
\end{figure}

In the third and fourth example, we consider two-dimensional problems with potential $f(x,y) = 0.5(x^2+y^2)$ and $f(x,y) = 0.1((x-1)^2+(y-1)^2)((x+1)^2+(y+1)^2)$. 
While the former one corresponds to the Gaussian distribution, the latter is a double-well potential and the target distribution is not log-concave.
In Figure~\ref{fig:boltzmann_ex3} and Figure~\ref{fig:boltzmann_ex4}, both the relative entropy $KL_X^\delta$ and $KL^\delta$ decreases over time, regardless of the convexity of the potential. 
\begin{figure}[htbp]
  \centering
  \includegraphics[width=0.22\textwidth]{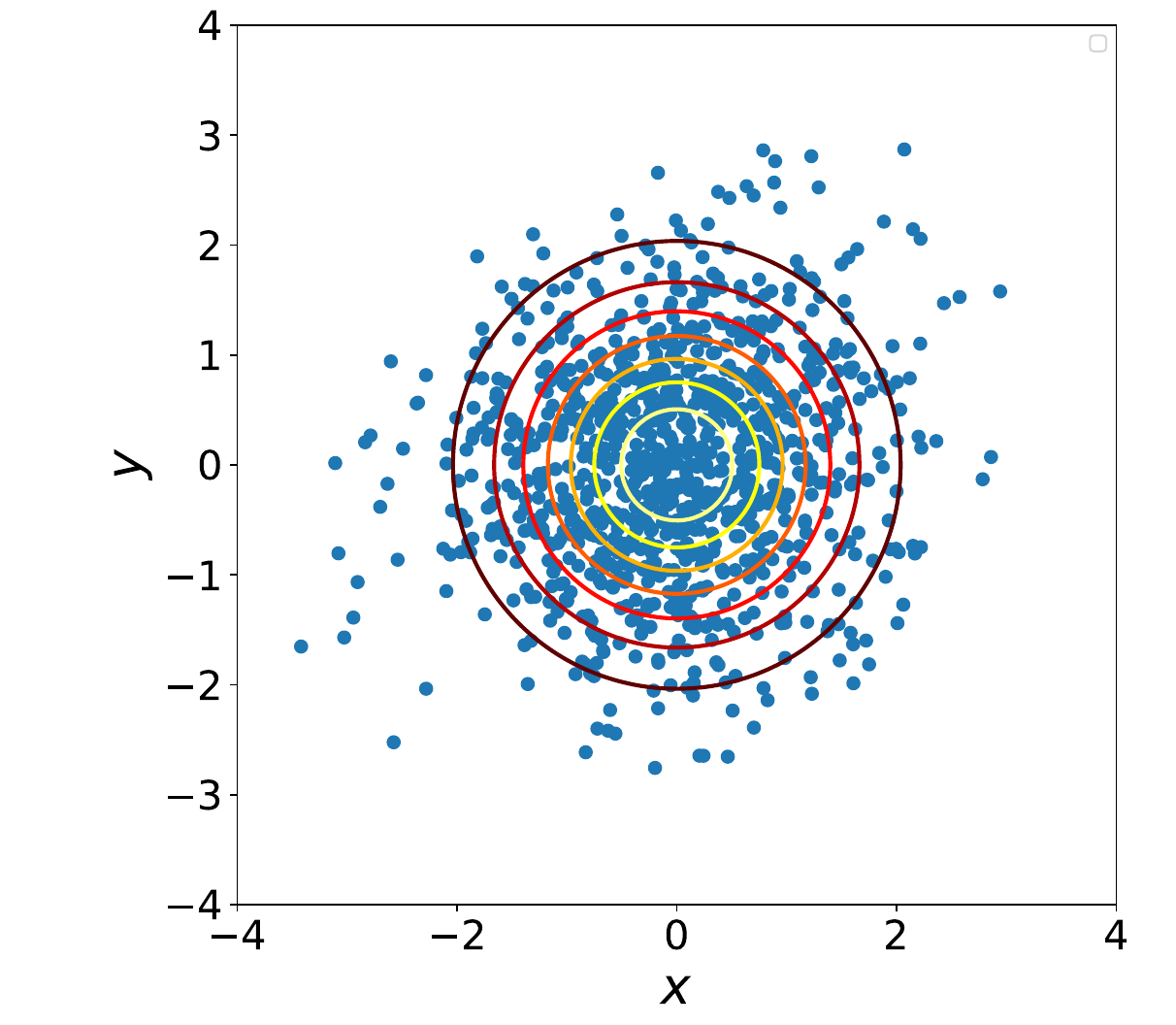}
  \includegraphics[width=0.22\textwidth]{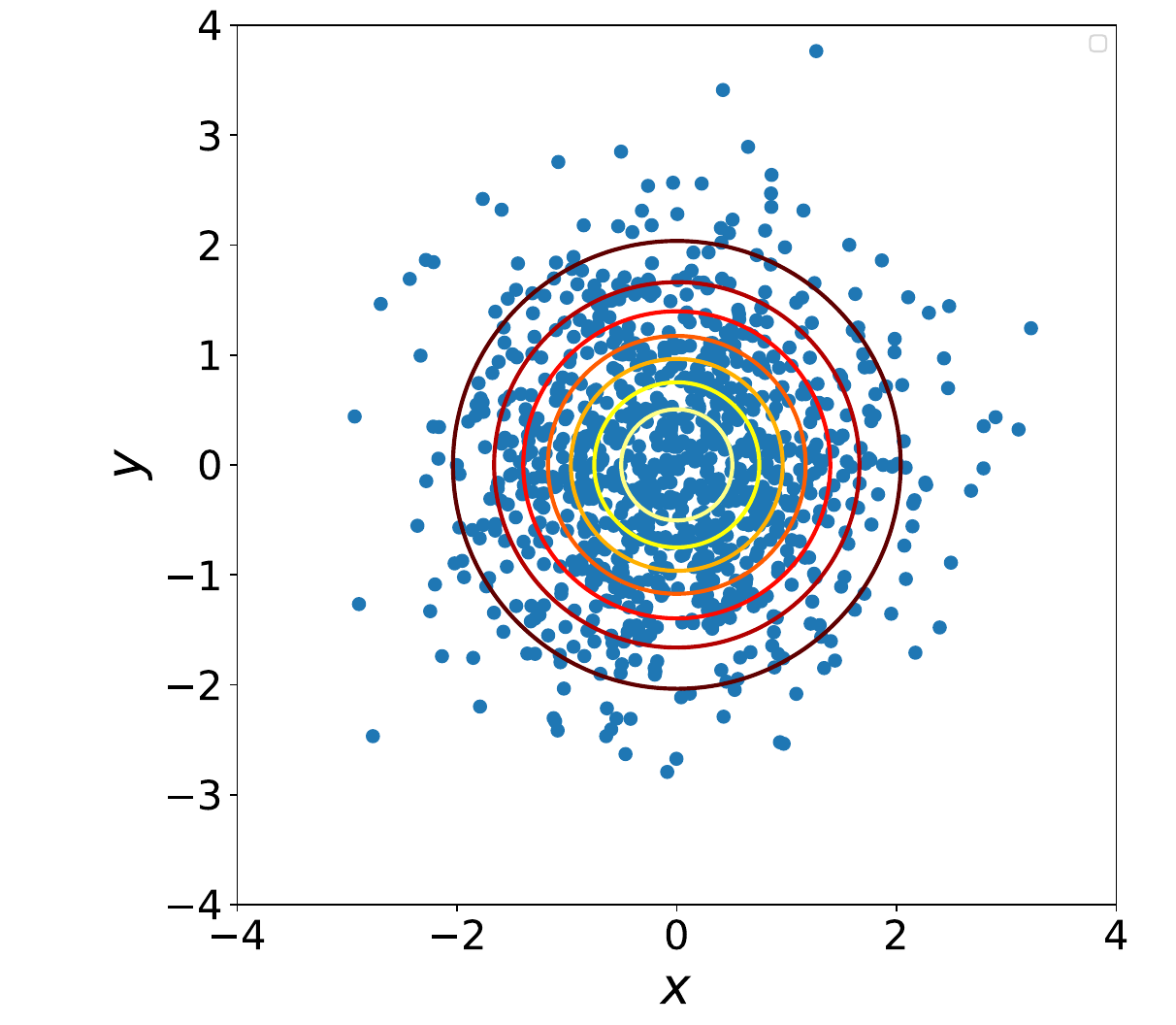}
  \includegraphics[width=0.22\textwidth]{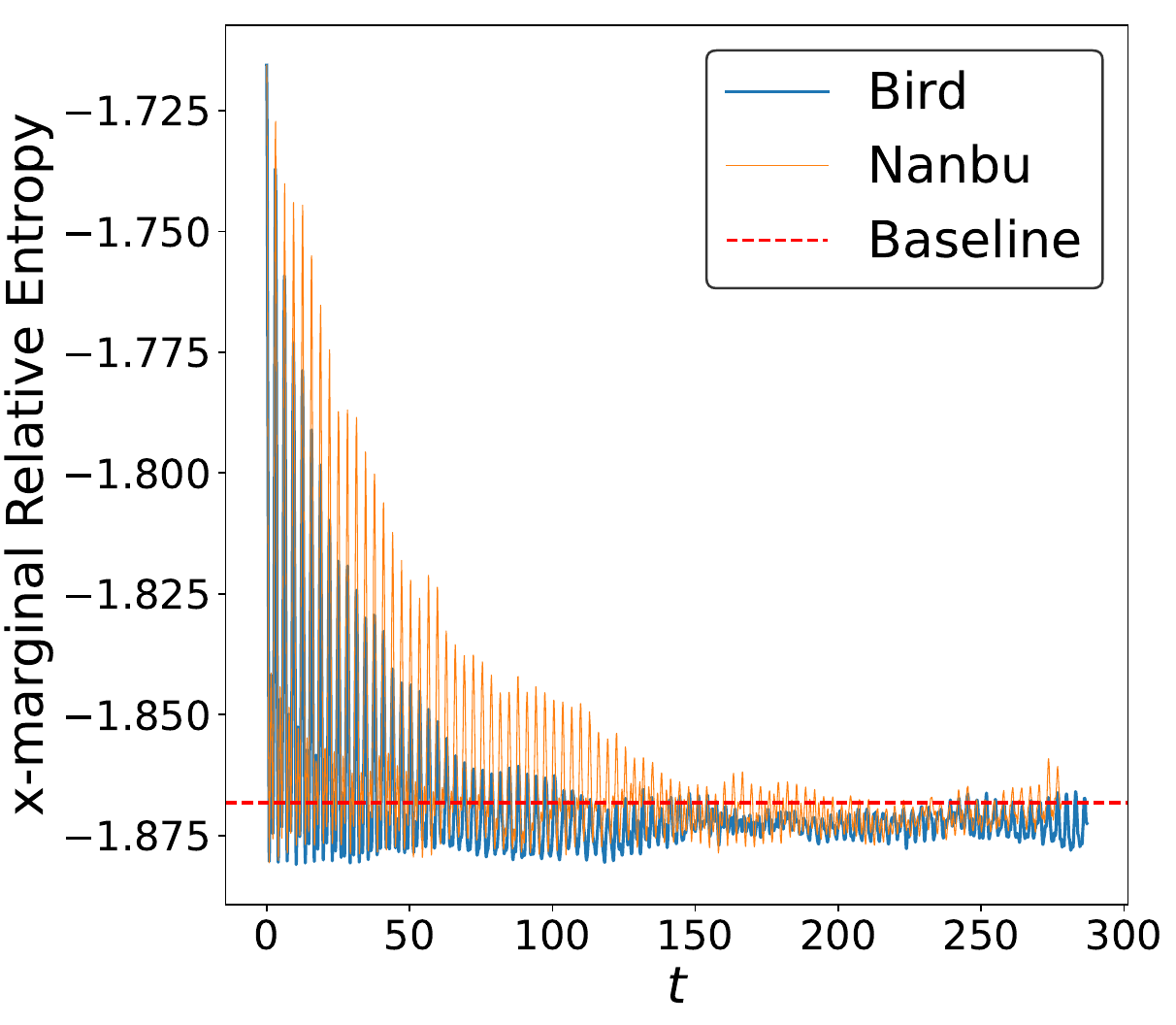}
  \includegraphics[width=0.22\textwidth]{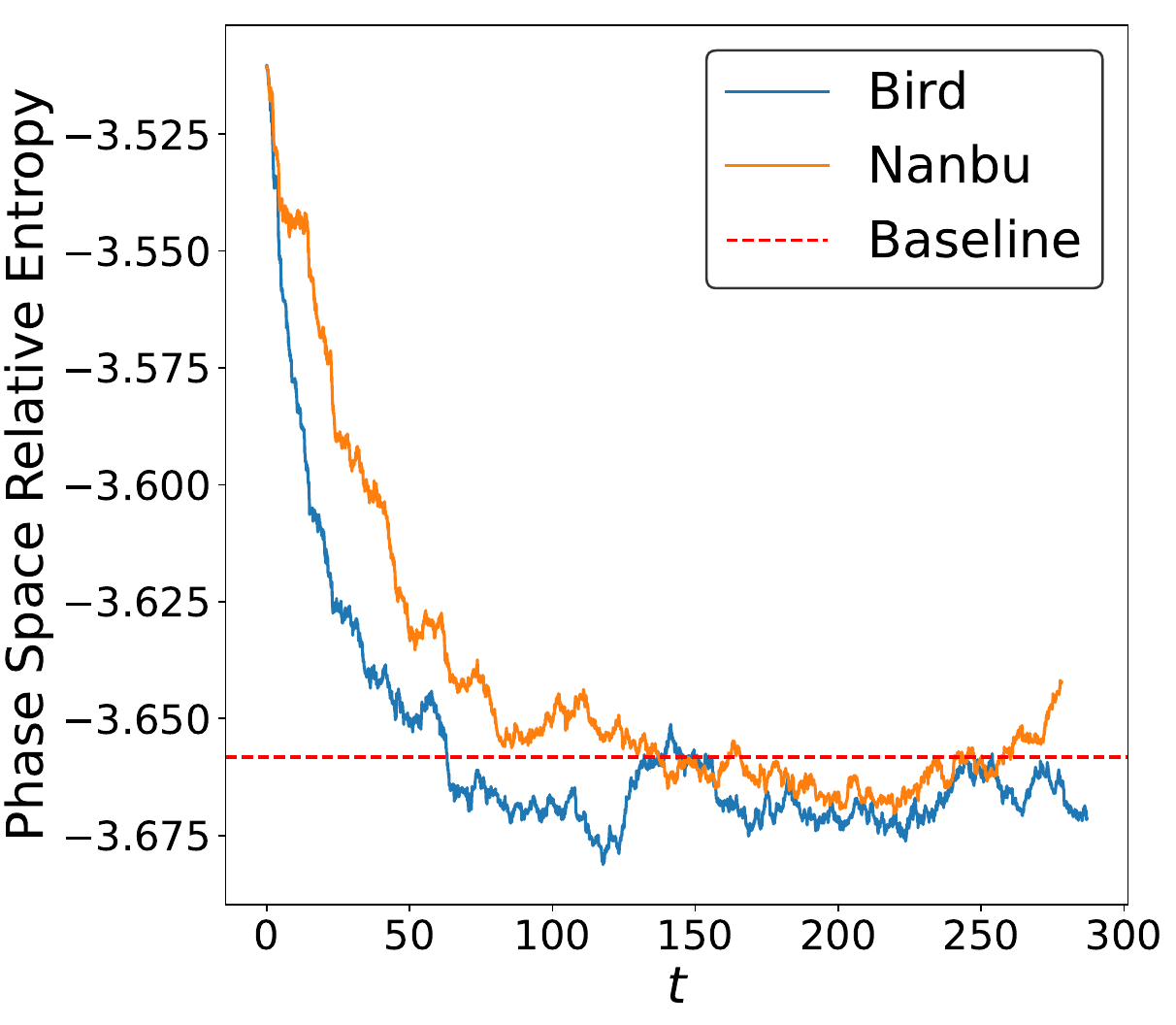}
  \caption{\textbf{Left Two Plots:} The contour of the target distribution $\rho^\ast \propto \exp(-0.5(x^2+y^2))$ and the samples obtained from the Nanbu sampler (Leftmost) and the Bird sampler (Second-Left). \textbf{Right Two Plots:}  The regularized relative entropy $KL_X^\delta$ (Second-Right) and $KL^\delta$ (Rightmost) of the Bird sampler and the Nanbu sampler. The baseline value is computed by using $N=1000$ samples obtained by the Inverse Transform method.
  }
  \label{fig:boltzmann_ex3}
\end{figure}

\begin{figure}[htbp]
  \centering
  \includegraphics[width=0.22\textwidth]{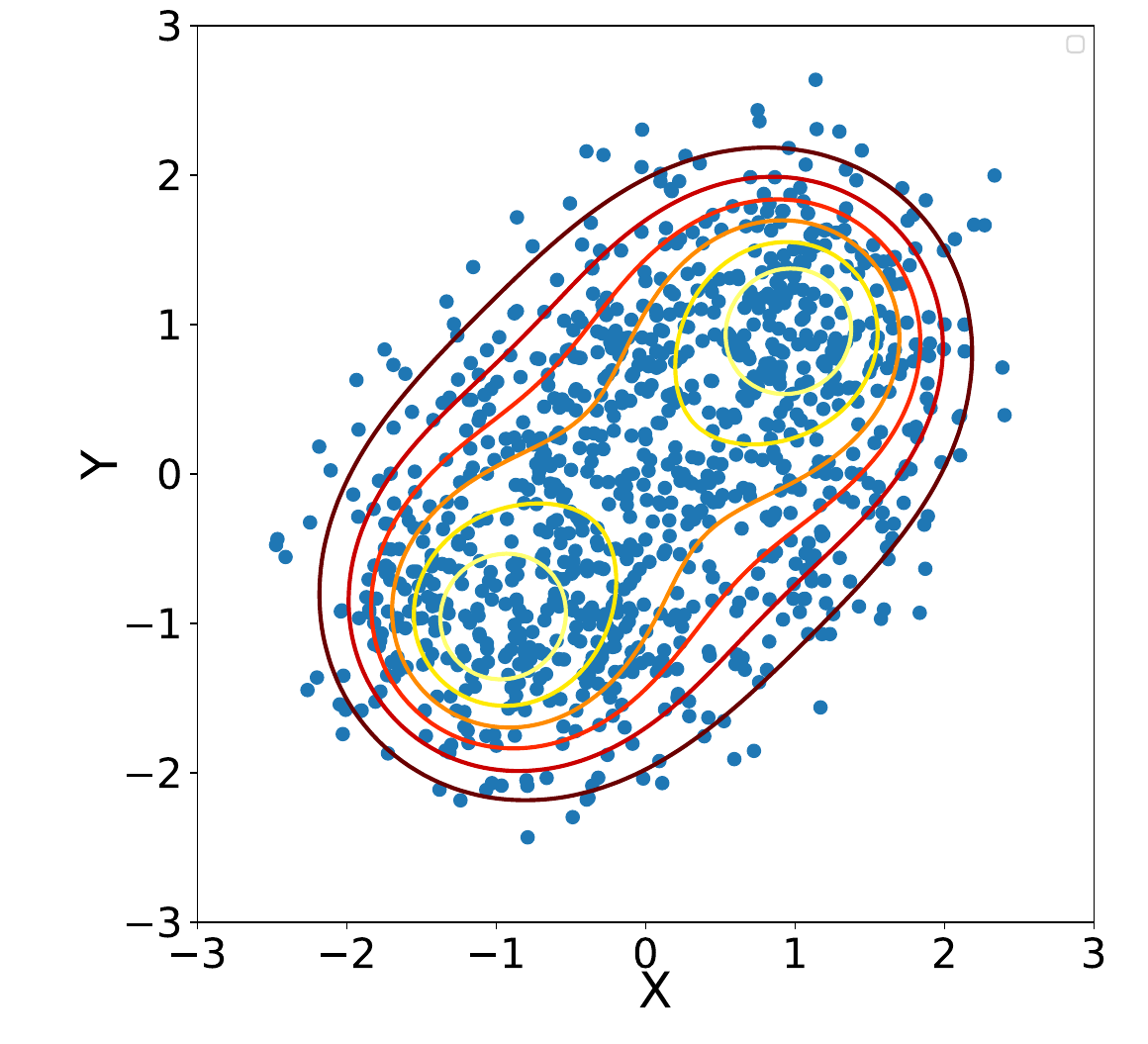}
  \includegraphics[width=0.22\textwidth]{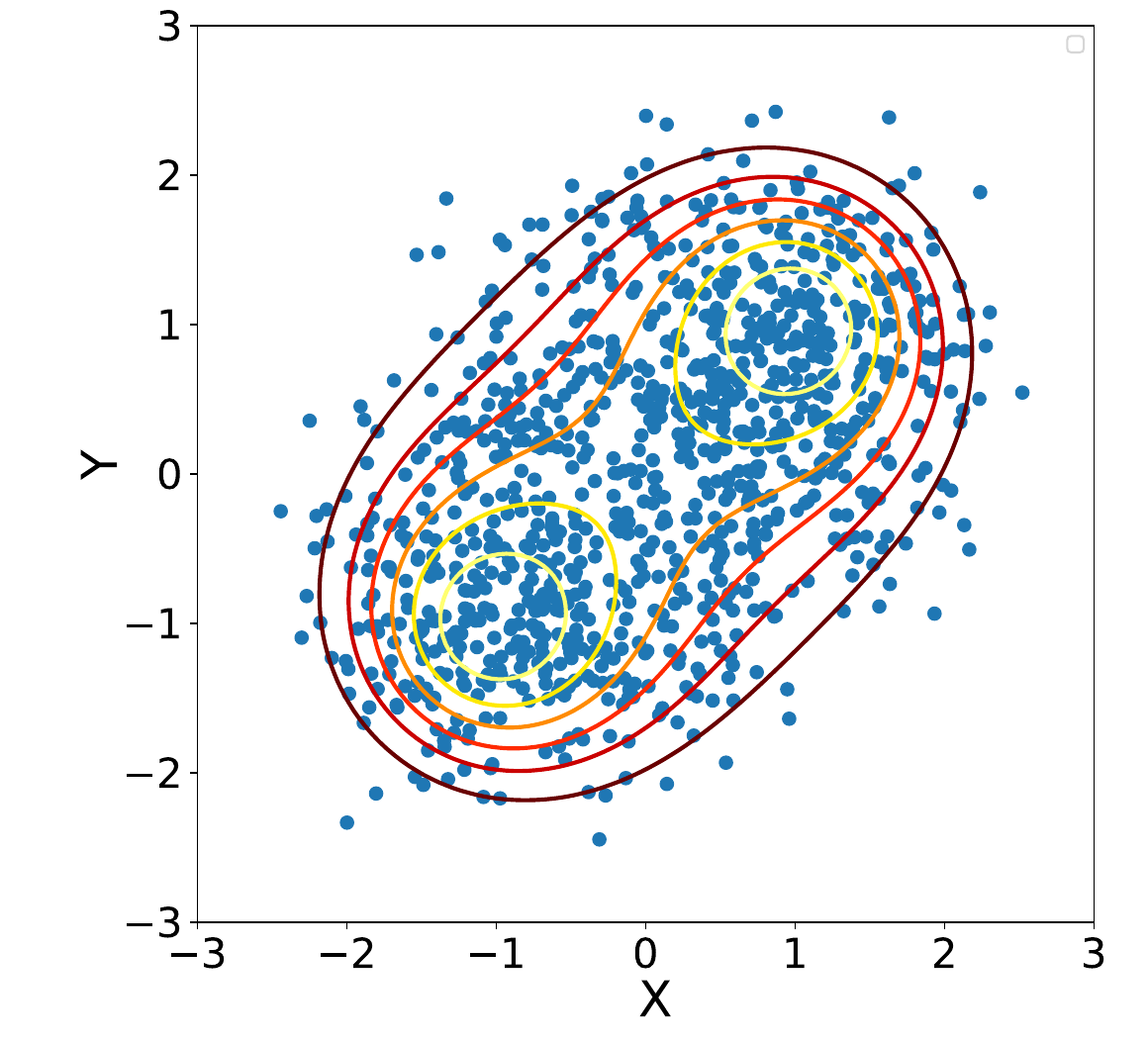}
  \includegraphics[width=0.22\textwidth]{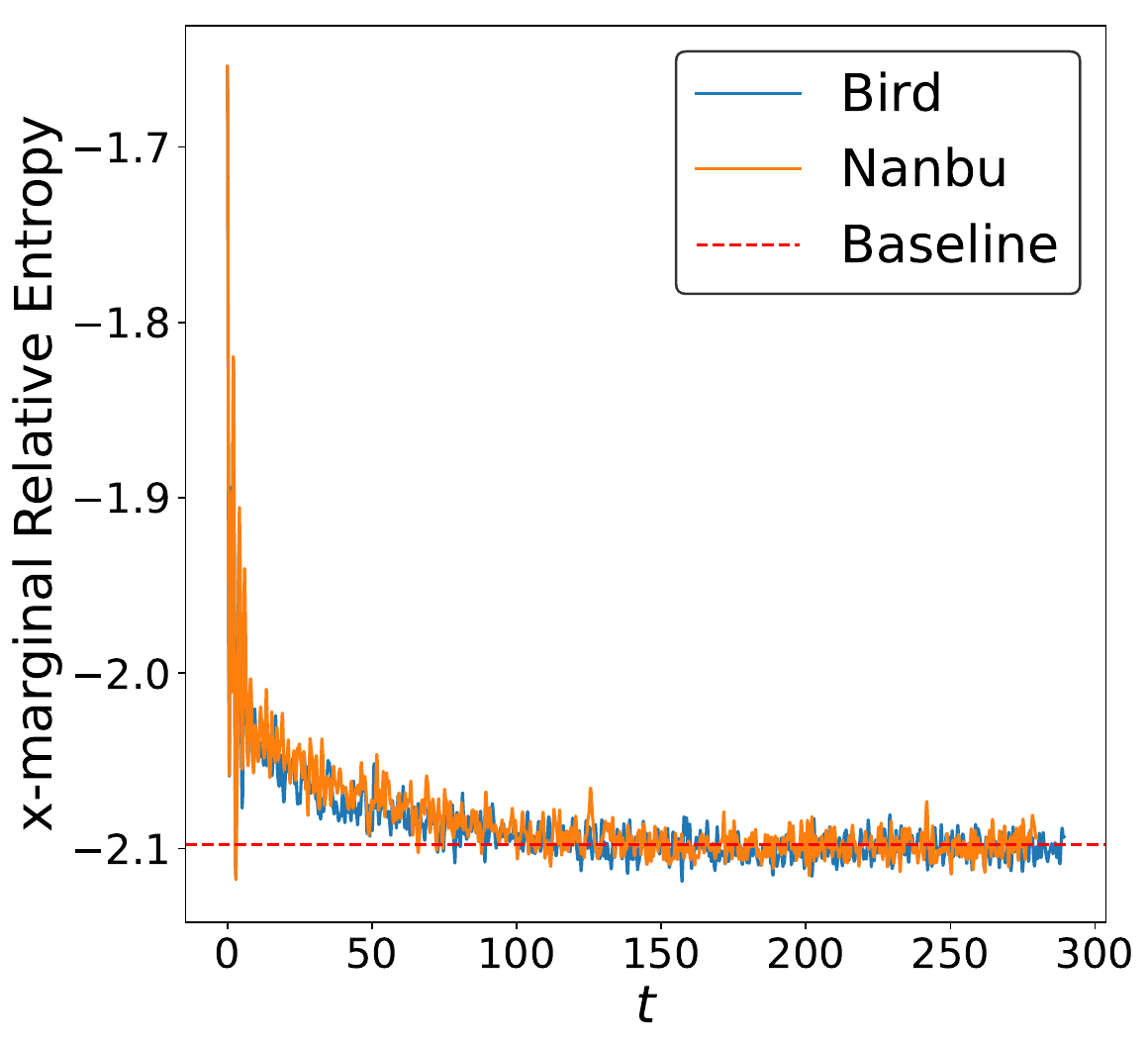}
  \includegraphics[width=0.22\textwidth]{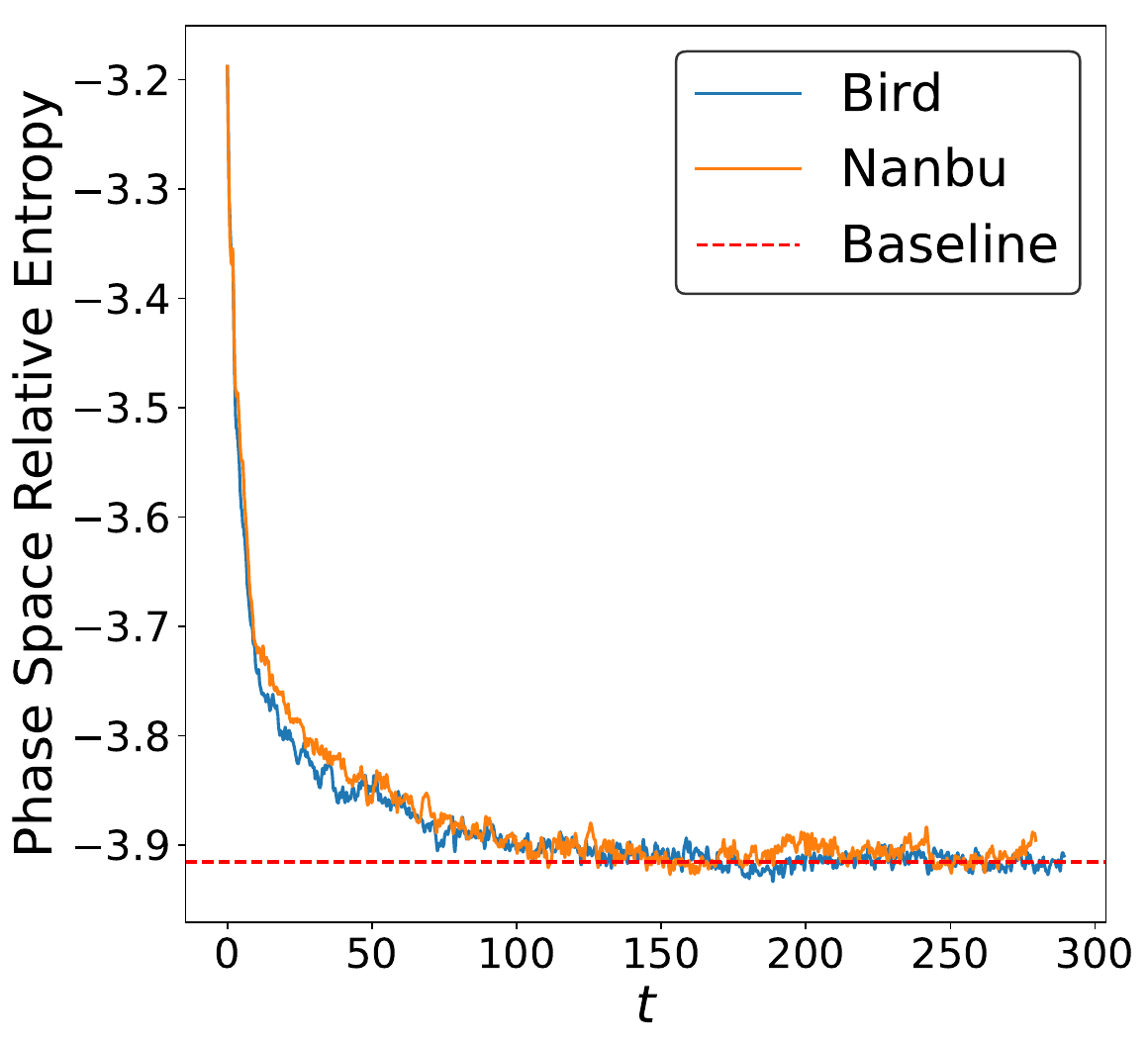}
  \caption{\textbf{Left Two Plots:} The contour of the target distribution $\rho^\ast \propto \exp(-0.1((x-1)^2+(y-1)^2)((x+1)^2+(y+1)^2))$ and the samples obtained from the Nanbu sampler (Leftmost) and the Bird sampler (Second-Left). \textbf{Right Two Plots:}  The regularized relative entropy $KL_X^\delta$ (Second-Right) and $KL^\delta$ (Rightmost) of the Bird sampler and the Nanbu sampler. The baseline value is computed by using $N=1000$ samples obtained by the Inverse Transform method.
  }
  \label{fig:boltzmann_ex4}
\end{figure}

\bibliographystyle{abbrv}
\bibliography{ref}

\end{document}